\documentclass [twoside,reqno, 11pt] {amsart}



\usepackage{amsfonts}
\usepackage{amssymb}
\usepackage{a4}
\usepackage{color}

\newtheorem{thm}{Theorem}[section]

\newtheorem{lem}[thm]{Lemma}
\newtheorem{prop}[thm]{Proposition}

\newtheorem{defn}[thm]{Definition}

\theoremstyle{definition}

\numberwithin{equation}{section}

\renewcommand{\Re}{\hbox{Re}\,}
\renewcommand{\Im}{\hbox{Im}\,}

\newcommand{\C}{\mathbb{C}}

\renewcommand{\div}{\operatorname{div}}

\newcommand{\R}{\mathbb{R}}

\newcommand{\supp}{\operatorname{supp}}

\newcommand{\tr}{\operatorname{tr}}

\newcommand{\Z}{\mathbb{Z}}

\parindent0pt
\parskip6pt

\def\hat{\widehat}
\def\tilde{\widetilde}
\def \bfo {\begin {eqnarray*} }
\def \efo {\end {eqnarray*} }
\def \ba {\begin {eqnarray*} }
\def \ea {\end {eqnarray*} }
\def \beq {\begin {eqnarray}}
\def \eeq {\end {eqnarray}}
\def \supp {\hbox{supp }}

\def \dist {\hbox{dist}}

\def \det {\hbox{det}}

\def \p {\partial}

\def\hat{\widehat}
\def\tilde{\widetilde}
\def \bfo {\begin {eqnarray*} }
\def \efo {\end {eqnarray*} }
\def \ba {\begin {eqnarray*} }
\def \ea {\end {eqnarray*} }
\def \beq {\begin {eqnarray}}
\def \eeq {\end {eqnarray}}
\def \supp {\hbox{supp }}

\def \dist {\hbox{dist}}

\def \det {\hbox{det}}

\def \p {\partial}

\addtolength{\voffset}{-0.9truecm}

\setlength{\textwidth}{15truecm} \addtolength{\hoffset}{-1.6cm}
\addtolength{\textwidth}{2cm} \setlength{\textheight}{24truecm}


\begin{document}

 \title[Magnetic Schr\"odinger operator ]{Inverse problems for magnetic Schr\"odinger operators in transversally anisotropic geometries}

\author[Krupchyk]{Katya Krupchyk}

\address
        {K. Krupchyk, Department of Mathematics\\
University of California, Irvine\\ 
CA 92697-3875, USA }

\email{katya.krupchyk@uci.edu}

\author[Uhlmann]{Gunther Uhlmann}

\address
       {G. Uhlmann, Department of Mathematics\\
       University of Washington\\
       Seattle, WA  98195-4350\\
       USA\\
       Department of Mathematics and Statistics\\ 
       University of Helsinki\\ 
       Finland\\
        and Institute for Advanced Study of the Hong Kong University of Science and Technology}
\email{gunther@math.washington.edu}

\maketitle

\begin{abstract} 
We study inverse boundary problems for magnetic Schr\"odinger operators on a compact Riemannian manifold with boundary of dimension $\ge 3$. In the first part of the paper we are concerned with the case of admissible geometries, i.e. compact Riemannian manifolds with boundary which are conformally embedded in a product of the Euclidean line and a simple manifold. We show that the knowledge of the Cauchy data on the boundary of the manifold for the magnetic Schr\"odinger operator with $L^\infty$ magnetic and electric potentials, determines the magnetic field and electric potential uniquely.  

In the second part of the paper we address the case of more general conformally transversally anisotropic geometries, i.e. compact Riemannian manifolds with boundary which are conformally embedded in a product of the Euclidean line and a compact manifold, which need not be simple. Here, under the assumption that the geodesic ray transform on the transversal manifold is injective, we prove that the knowledge of the Cauchy data on the boundary of the manifold for a magnetic Schr\"odinger operator with continuous potentials, determines the magnetic field uniquely. 
Assuming that the electric potential is known, we show that the Cauchy data determines the magnetic potential up to a gauge equivalence. 
\end{abstract}

\section{Introduction and statement of results}

Let $(M,g)$ be a   smooth compact oriented Riemannian manifold of dimension $n\ge 3$ with smooth boundary $\p M$. Let $A\in C^\infty(M, T^*M)$ be a $1$-form with complex valued $C^\infty$ coefficients and let $q\in C^\infty(M,\C)$.  
Let $d: C^\infty(M)\to C^\infty(M, T^*M)$ be the de Rham differential and let us introduce 
\[
d_A=d+i A: C^\infty(M)\to C^\infty(M, T^*M).
\]
In geometric terms, the map $d_A$ can be viewed as a connection on the trivial line bundle $M\times\C$ over $M$. 

When $u,v\in C^\infty(M)$, we introduce the natural $L^2$--scalar product, 
\[
(u,v)_{L^2(M)}=\int_M u\overline{v}dV,
\] 
where $dV$ is the Riemannian volume element on $M$.   Similarly, when $\alpha,\beta\in C^\infty(M, T^*M)$, we define the $L^2$--scalar product in the space of $1$--forms,
\[
( \alpha, \beta)_{L^2(M, T^*M)}=\int_M  \langle \alpha, \overline{\beta}\rangle_g dV(x),
\]
where $\langle\cdot, \cdot\rangle_g$ is the pointwise scalar product in the space of $1$--forms induced by the Riemannian metric $g$. In local coordinates $(x_1,\dots, x_n)$ in which $\alpha=\alpha_j dx^j$, $\beta=\beta_j dx^j$ and $(g^{jk})$ is the matrix inverse of 
$(g_{jk})$, $g=g_{jk}dx^jdx^k$, we have
\[
\langle \alpha, \beta\rangle_g=g^{jk}\alpha_j \beta_k. 
\]
Here and in what follows we use Einstein's summation convention: repeated indices in lower and upper positions are summed. 

The formal $L^2$--adjoint $d_A^*: C^\infty(M, T^*M)\to C^\infty(M)$ of $d_A$ is defined by
\[
(d_A u, v)_{L^2(M,T^*M)}=(u,d^*_Av)_{L^2(M)}, \quad u\in C_0^\infty(M^0), \quad v\in C_0^\infty(M^0, T^*M^0),
\] 
where $M^0=M\setminus\p M$ stands for the interior of $M$. 
We have
\[
d_A^*=d^*-i \langle\overline{A}, \cdot\rangle_g. 
\]
Also in local coordinates,   we see that 
\begin{equation}
\label{eq_int_2}
d^*v=-|g|^{-\frac{1}{2}}\p_{x_j}(|g|^{\frac{1}{2}}g^{jk}v_k),
\end{equation}
where $|g|=\det(g_{jk})$ and $v=v_jdx^j$.

In this paper we shall be concerned with inverse boundary problems for the magnetic Schr\"odinger operator defined by
\begin{equation}
\label{eq_int_1}
\begin{aligned}
L_{g,A,q}u&=(d_{\overline{A}}^*d_A+q)u\\
&= -\Delta u +i d^*(Au)- i \langle A,du\rangle _g+ (\langle A, A\rangle_g+q)u,\quad u\in C^\infty(M).
\end{aligned}
\end{equation}
Specifically, our focus is on establishing global uniqueness results in the case when the magnetic potential $A$ is of low regularity.  Let us now proceed to introduce the precise assumptions and state the main results of this paper.  

In the first part of this paper we assume that $A\in L^\infty(M, T^*M)$ and $q\in L^\infty(M, \C)$. It follows then from \eqref{eq_int_1} that  
\begin{equation}
\label{eq_int_1_mapping_prop}
L_{g,A,q}: C^\infty_0(M^0)\to H^{-1}(M^0)
\end{equation}
is a bounded operator.  Here and in what follows $H^s(M^0)$, $s\in \R$, is the standard Sobolev space on $M^0$, see \cite[Chapter 4]{Taylor_book_1}, and  $d^*(Au)\in H^{-1}(M^0)$ is defined by 
\begin{equation}
\label{eq_int_2_dis}
\langle d^*(Au), \varphi\rangle_{M^0}:=\int_{M} \langle  Au, d\varphi\rangle_g dV,\quad u, \varphi\in C_0^\infty(M^0), 
\end{equation}
where $\langle \cdot, \cdot\rangle_{M^0}$ is the distributional duality on $M^0$.

Let $u\in H^1(M^0)$ be a solution to 
\begin{equation}
\label{eq_int_3}
L_{g,A,q} u=0
\end{equation}
in $\mathcal{D}'(M^0)$. Associated to $u$ is  the trace of the magnetic normal derivative $\langle d_Au , \nu \rangle_g\in H^{-1/2}(\p M)$ defined as follows, 
\begin{equation}
\label{eq_trace_normal_int}
\begin{aligned}
\langle \langle d_A u, \nu \rangle_g, f \rangle_{H^{-\frac{1}{2}}(\p M)\times H^{\frac{1}{2}}(\p M)}&:=(d_A u,d_{\overline{A}} \overline{v})_{L^2(M)}+(qu, \overline{v})_{L^2(M)}\\
&=\int_M \langle du+i A u, dv-iA v \rangle_g dV +\int_M q uv dV,
\end{aligned}
\end{equation}
where  $f\in H^{1/2}(\p M)$ and $v\in H^1(M^0)$  is  a continuous extension of $f$.  
As $u$ satisfies the equation \eqref{eq_int_3}, the above definition of the trace $\langle  d_A u, \nu \rangle_g$  is independent of the choice of an extension of $f$.  Let us also remark that   the definition \eqref{eq_trace_normal_int} is motivated by the integration by parts formula valid in the case of smooth potentials $A$ and $q$, 
\begin{equation}
\label{eq_int_integration_by_parts}
(L_{g,A,q} u, v)_{L^2(M)}= (d_A u, d_{\overline{A}} v)_{L^2(M)}+(qu,v)_{L^2(M)}-\int_{\p M} \langle d_A u,\nu \rangle_g \overline{v}dS,
\end{equation}
where  $u,v\in C^\infty(M)$ and $\nu $ is the conormal such that $\nu_j=g_{jk}\nu^k$ with $\nu=\nu^k \p_{x_k}$ being the unit outer normal to $\p M$. 

We shall next introduce the set of the Cauchy data for solutions of the magnetic Schr\"odinger equation given by 
\[
C_{g, A,q}:=\{ (u|_{\p M}, \langle d_A u, \nu \rangle_g|_{\p M}):  u\in H^1(M^0) \text{ satisfies } L_{g,A,q} u=0\text{ in } \mathcal{D}'(M^0) \}. 
\]
The inverse boundary value problem for the magnetic Schr\"odinger operator that we are interested in is to
determine $A$ and $q$ in  $M$ from the knowledge of the set of the Cauchy data $C_{g, A,q}$.  A well-known feature of this problem is that there is an obstruction to uniqueness given by the following class of gauge transformations, see \cite{Sun_1993}, \cite{DKSaloU_2009}. Let $F\in W^{1,\infty}(M^0)$ be a non-vanishing function. For any $u\in H^1(M^0)$,  we have
\[
(F^{-1}\circ L_{g,A,q}\circ F)u=L_{g,A -i F^{-1}dF,q}u, 
\]
in $\mathcal{D}'(M^0)$. Furthermore, if  $F|_{\p M}=1$, a computation using \eqref{eq_trace_normal_int} shows that 
\[
C_{g,A,q}=C_{g,A -i F^{-1}dF,q}.
\] 
Hence, from the knowledge of the set of the Cauchy data $C_{g,A,q}$ one may only hope to recover the magnetic field $dA$ and electric potential $q$ in $M$ uniquely, and the magnetic potential itself up to a gauge transformation.

The inverse boundary problem for the magnetic Schr\"odinger operator was studied extensively in the Euclidian setting with the goal of obtaining global uniqueness results under the minimal possible regularity assumptions on $A$. First in \cite{Sun_1993}, the global uniqueness was established for magnetic potentials in $W^{2,\infty}$, satisfying a smallness condition. In \cite{NakSunUlm_1995}, the smallness condition 
was eliminated for smooth magnetic and electric potentials, and for compactly supported $C^2$ magnetic potentials and $L^\infty$ electric potentials.  The uniqueness results were subsequently extended to 
$C^1$ magnetic potentials in \cite{Tolmasky_1998}, to some less regular but small potentials in \cite{Panchenko_2002}, and to Dini continuous magnetic potentials in \cite{Salo_diss}.  To the best of our knowledge, the sharpest results are obtained in \cite{Krup_Uhlmann_magnet} for $A\in L^\infty$ and $q\in L^\infty$ in $\R^n$, $n\ge 3$,   and in  \cite{Haberman_mag} for small $A\in W^{s,3}$ and $q\in W^{-1,3}$ in $\R^3$, for $s>0$.

Let us now turn the attention to the setting of  smooth compact Riemannian manifolds.  Here a powerful method to study inverse boundary problems based on the technique of  Carleman estimates with limiting Carleman weights was developed in \cite{DKSaloU_2009}. The concept of a limiting Carleman weight was first introduced and applied to inverse boundary problems in the Euclidean setting in \cite{Kenig_Sjostrand_Uhlmann}.  An important result of   \cite{DKSaloU_2009} states that on a simply connected open manifold, the existence of  a limiting Carleman weight is equivalent to the existence of a parallel unit vector field for a conformal multiple of the metric. Locally, the latter condition is equivalent to the fact that the manifold is conformal to the product of a Euclidean interval and some Riemannian manifold of dimension $n-1$.  Now it turns out that the existence of a limiting Carleman weight does not in itself suffice to solve the inverse boundary problem, and further conditions should be introduced on the transversal $(n-1)$-dimensional manifold. Following  \cite{DKSaloU_2009}, let us give the following definitions.

\begin{defn}
A compact manifold $(M_0, g_0)$ with boundary is called simple if for any $p\in M_0$, the exponential map $\exp_p$ with  its maximal domain of definition in $T_p M_0$ is a diffeomorphism onto $M_0$, and if $\p M_0$ is strictly convex in the sense that the second fundamental form of $\p M_0\hookrightarrow M_0$ is positive definite.  
\end{defn}

\begin{defn}
A  compact Riemannian manifold $(M,g)$ of dimension $n\ge 3$ with boundary $\p M$ is called admissible if there exists  an $(n-1)$--dimensional compact simple manifold $(M_0,g_0)$ such that $M\subset\R\times M_0$  and  $g=c(e\oplus g_0)$ where $e$ is the Euclidean metric on $\R$ and $c$ is a positive smooth function on $M$. 
\end{defn}

\textbf{Remark}.  It well known  that a simple manifold $M_0$ enjoys the following geometric and dynamical properties, see \cite{DKSaloU_2009}, \cite{Guillarmou_2017}. First, since the maximal domain of $\exp_p$  is starshaped,  $M_0$ is diffeomorphic to a closed ball, and  in particular, $M_0$ is simply connected.  Furthermore, $M_0$ has no conjugate points and no trapped geodesics, i.e. geodesics entirely contained in $M^0_0$, and between any two boundary points $x,x'\in \p M_0$ there is a unique geodesic in $M_0$ with the endpoints $x,x'$.   

An example of a simple manifold is a spherical cap strictly smaller than the northern hemisphere, 
\[
\mathbb{S}^{n-1}_{\ge \alpha_0}=\{x\in \mathbb{S}^{n-1}: x_n\ge \alpha_0\}, \quad 0<\alpha_0<1,
\]
while the hemisphere itself is not a simple manifold. 

Turning the attention to admissible manifolds, as explained in \cite{DKSaloU_2009}, examples include bounded smooth domains in the Euclidean space, in the sphere minus a point and in the hyperbolic space, sufficiently small subdomains of any conformally flat manifold, and bounded smooth domains in $\R^n$ equipped with a metric of the form,
\[
g(x_1,x')=c(x)\begin{pmatrix}
1& 0\\
0& g_0(x')
\end{pmatrix},
\] 
where $c$ is a positive smooth function and $g_0$ is a simple metric in the $x'$ variables.  

Starting with the fundamental work \cite{Syl_U}, a basic strategy for establishing global uniqueness in inverse boundary problems for elliptic equations consists in constructing complex geometric optics solutions for the equations in question. It turns out that the setting of admissible manifolds is particularly well adapted to such constructions as the eikonal and transport equations can be solved globally in suitable  global coordinates on an admissible manifold, once the limiting Carleman weight, governing the exponential growth and decay of the solutions, has been chosen. To conclude the proof of a global uniqueness result in the context of admissible manifolds, it turns out that one should invert an attenuated geodesic ray transform on the transversal manifold. Thanks to the work \cite{DKSaloU_2009},  extending the previous results of \cite{Anikonov}, \cite{Mukhometov}, \cite{Sharafutdinov_1} and \cite{Sharafutdinov_2}, the injectivity of the latter transform is known in the case of simple manifolds.

The inverse boundary problem for the magnetic Schr\"odinger operator on admissible manifolds was studied in  \cite{DKSaloU_2009} and the global uniqueness was established for $C^\infty$ electric and magnetic potentials. To the best of our knowledge, the problem of weakening the regularity assumptions on the potentials in the context of admissible manifolds was only addressed in \cite{DKSalo_2013} when $A=0$ and $q\in L^{\frac{n}{2}}(M)$.  The first result of our paper is a global uniqueness result on admissible manifolds for electric and magnetic potentials that are merely bounded.  It can be viewed as an analog of our previous result \cite{Krup_Uhlmann_magnet} in the Euclidean case, in the setting of   admissible manifolds.

\begin{thm}
\label{thm_main}
Let $(M,g)$ be admissible. Let $A^{(1)}, A^{(2)}\in L^\infty(M, T^*M)$ be complex valued $1$-forms, and $q^{(1)},q^{(2)}\in L^\infty(M,\C)$. If $C_{g, A^{(1)},q^{(1)}}=C_{g, A^{(2)},q^{(2)}}$, then $dA^{(1)}=dA^{(2)}$ and $q^{(1)}=q^{(2)}$. 
 \end{thm}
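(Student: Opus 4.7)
I follow the complex geometric optics (CGO) strategy of Dos Santos Ferreira--Kenig--Salo--Uhlmann, adapted to cope with only $L^\infty$ regularity in the spirit of the authors' Euclidean result. The argument splits into four steps: (i) derive an integral identity from the equality of Cauchy data; (ii) construct $H^1$ CGO solutions for both magnetic Schr\"odinger operators with only $L^\infty$ coefficients; (iii) pass to the semiclassical limit to extract an attenuated geodesic ray transform on the transversal simple manifold $(M_0,g_0)$; (iv) invoke injectivity of that transform and then a gauge transformation to recover first $dA$ and then $q$.

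\textbf{Integral identity.} A conformal rescaling reduces matters to the product metric $g=e\oplus g_0$ on an admissible embedding $M\subset\R\times M_0$, at the cost of replacing $(A^{(j)},q^{(j)})$ by new $L^\infty$ potentials obtained from explicit formulas. From $C_{g,A^{(1)},q^{(1)}}=C_{g,A^{(2)},q^{(2)}}$ and \eqref{eq_trace_normal_int}, I deduce that for every $u_1\in H^1(M^0)$ satisfying $L_{g,A^{(1)},q^{(1)}}u_1=0$ and every $u_2\in H^1(M^0)$ satisfying $L_{g,\overline{A^{(2)}},\overline{q^{(2)}}}u_2=0$, one has
\[
i\int_M \bigl\langle A^{(1)}-A^{(2)},\, u_1\,d\overline{u_2}-\overline{u_2}\,du_1\bigr\rangle_g\,dV+\int_M Q\,u_1\,\overline{u_2}\,dV=0,
\]
where $Q=\langle A^{(1)},A^{(1)}\rangle_g-\langle A^{(2)},A^{(2)}\rangle_g+q^{(1)}-q^{(2)}\in L^\infty(M)$. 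Choosing suitable CGO pairs will localize this identity along geodesics of $M_0$.

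\textbf{CGO construction and ray transform.} Write $x=(x_1,x')\in\R\times M_0$ and pick the limiting Carleman weight $\varphi(x)=x_1$, together with a smooth global transversal phase $\psi(x')$ solving $|\nabla_{g_0}\psi|_{g_0}^2=1$; simplicity of $M_0$ provides such a $\psi$ arising from the distance function to a point outside $M_0$. I build CGOs of the form
\[
u=e^{-(\varphi+i\psi)/h}\bigl(a(x')\,e^{i\lambda x_1}+r(x;h)\bigr),\qquad \lambda\in\R,
\]
where the amplitude $a$ is determined by a transport equation along the complex characteristic field of the conjugated Laplacian, solvable globally thanks to the product structure, and $r$ is an $o(1)$ remainder in $H^1_{\text{scl}}$. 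Handling only $L^\infty$ potentials forces a Carleman estimate that gains a full derivative for the conjugated operator, i.e.\ an $H^{-1}_{\text{scl}}\to H^1_{\text{scl}}$ solvability statement absorbing the first-order magnetic perturbation $id^*(A\,\cdot)-i\langle A,d\cdot\rangle_g$ into the remainder. Substituting a CGO pair into the integral identity, the $L^\infty$ zeroth-order terms contribute $O(h)$ while the surviving principal terms assemble, after varying $\lambda\in\R$ and $\psi$ (the latter by moving the base point outside $M_0$), into the family of attenuated geodesic ray transforms of the tangential components of $A^{(1)}-A^{(2)}$ along all geodesics of $(M_0,g_0)$. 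Injectivity of this transform on simple manifolds, together with the analogous argument for the opposite weight $-x_1$, forces $dA^{(1)}=dA^{(2)}$.

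\textbf{Gauge reduction, recovery of $q$, and main obstacle.} Since $dA^{(1)}=dA^{(2)}$ on $M$ (which is diffeomorphic to a subset of $\R\times M_0$ with $M_0$ simply connected), exactness combined with a boundary adjustment provides $F\in W^{1,\infty}(M^0)$ with $F|_{\partial M}=1$ and $A^{(2)}=A^{(1)}-iF^{-1}dF$. By the gauge invariance of the Cauchy data recalled before the statement, one may then assume $A^{(1)}=A^{(2)}$, whereupon the integral identity collapses to $\int_M (q^{(1)}-q^{(2)})u_1\overline{u_2}\,dV=0$; a further pair of CGO solutions (now with scalar amplitude) combined with the injectivity of the unattenuated geodesic ray transform yields $q^{(1)}=q^{(2)}$. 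The hard step is the CGO construction: a first-order perturbation whose coefficient is only $L^\infty$ is beyond the reach of standard semiclassical Carleman estimates, so one must prove a Carleman estimate with a full derivative gain and simultaneously take the amplitude $a$ sufficiently regular on $M_0$ (via a $\bar\partial$-type equation on the transversal manifold whose right-hand side involves $A$) to justify the integration by parts that converts the identity into a ray-transform relation.
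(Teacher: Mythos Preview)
Your overall architecture matches the paper's, but two essential technical ingredients are missing, and without them the argument does not go through for $L^\infty$ magnetic potentials.

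\textbf{Regularization of $A$ in the amplitude.} You say the amplitude $a$ is determined by a transport equation whose right-hand side involves $A$, and you flag its regularity as ``the hard step,'' but you offer no mechanism to control it. When $A$ is merely $L^\infty$, solving the transport (or $\bar\partial$) equation directly yields an amplitude with no useful derivative bounds, so you cannot estimate $\Delta a$ or $\langle A,da\rangle$ well enough to show the remainder equation has right-hand side that is $o(h)$ in $H^{-1}_{\text{scl}}$. The paper resolves this by mollifying $A$ to a smooth $A_\tau$ with $\|A-A_\tau\|_{L^2}=o(1)$, $\|\nabla A_\tau\|_{L^\infty}=\mathcal{O}(\tau^{-1})$, $\|\Delta A_\tau\|_{L^\infty}=\mathcal{O}(\tau^{-2})$, solving the transport equation with $A_\tau$ in place of $A$, and then coupling the scales via $\tau=h^\sigma$, $0<\sigma<1/2$. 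This balancing is what makes the remainder $o(h)$ and is not optional.

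\textbf{Linearizing the limiting identity.} You assert that the surviving principal terms ``assemble into the family of attenuated geodesic ray transforms'' of $A^{(1)}-A^{(2)}$. They do not, at least not directly: because the amplitude carries a phase $e^{i\Phi}$ with $\bar\partial\Phi=-\tfrac12(\tilde A_1+i\tilde A_r)$, the $h\to 0$ limit of the identity reads $\int_{\Omega_\theta}(\tilde A_1+i\tilde A_r)e^{i\Phi}a_0\,d\bar\rho\wedge d\rho=0$, which is nonlinear in $\tilde A$. The paper removes the exponential by a complex-analytic argument: Stokes' theorem converts this to a boundary integral $\int_{\partial\Omega_\theta}e^{i\Phi}a_0\,d\rho=0$; then, using H\"older continuity of $\Phi(\cdot,\cdot,\theta)$ (obtained via the Beurling--Ahlfors operator) and the Plemelj--Sokhotski--Privalov formula, one produces a holomorphic function $F=e^G$ with $F|_{\partial\Omega_\theta}=e^{i\Phi}|_{\partial\Omega_\theta}$ and chooses $a_0=Ge^{-G}e^{i\lambda(x_1+ir)}$ to strip out $e^{i\Phi}$. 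Only after this does one obtain the linear identity $\iint(\tilde A_1+i\tilde A_r)e^{i\lambda(x_1+ir)}\,dx_1dr=0$, which then feeds into the attenuated ray transform. Your outline jumps over this entire step.

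Two smaller points: the injectivity of the attenuated ray transform you invoke must hold for $L^\infty$ data, which is a nontrivial extension (the paper cites the Assylbekov--Yang and Dos Santos Ferreira--Kenig--Salo results, proved via duality and ellipticity of the normal operator); and $M$ need not be simply connected a priori---the paper first enlarges $M$ to a simply connected admissible manifold with connected boundary before applying the Poincar\'e lemma for currents.
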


Let us point out that the main difficulty in proving Theorem \ref{thm_main} is due to the fact that when $A\in L^\infty$, the operator $L_{g,A,q}$ has singular coefficients, see \eqref{eq_int_1} and \eqref{eq_int_1_mapping_prop}. To overcome this difficulty when constructing complex geometric optics solutions to the magnetic Schr\"odinger equation, we shall first prove a Carleman estimate with a gain of two derivatives on a manifold supporting a limiting Carleman weight, and use it to construct complex geometric optics solutions on an admissible manifold. When doing so, we also rely crucially on a smoothing argument, approximating the $L^\infty$ magnetic potential by smooth 1-forms in the $L^2$ sense.   To complete the proof of Theorem  \ref{thm_main}, we exploit a result related to the injectivity of the attenuated ray transform acting in the space of  $L^\infty$ functions and $L^\infty$ 1-forms, established in \cite[Proposition 5.1]{DKSalo_2013} and \cite[Proposition 5.1]{Assylbekov_Yang_2017}.  In \cite{DKSalo_2013} and \cite{Assylbekov_Yang_2017}, in order to circumvent the difficulty related to the fact that $L^\infty$ functions and forms cannot be restricted to geodesics, the authors use duality arguments and the ellipticity of the normal operator, associated to the attenuated geodesic ray transform.

In the second part of the paper, following \cite{DKuLS_2016}, we are concerned with removing the simplicity assumption on the transversal manifold $M_0$. To that end, we have the following definition. 

\begin{defn}
Let $(M,g)$ be a smooth compact Riemannian manifold of dimension $n\ge 3$ with smooth boundary. We say that  $(M,g)$ is conformally transversally anisotropic if there exists  a smooth compact Riemannian manifold  $(M_0,g_0)$ with smooth boundary of dimension $n-1$ such that 
$(M,g)$ is conformally embedded into a manifold of the form $(\R\times M_0, e\oplus g_0)$. 
\end{defn}

An important role in what follows is played by the geodesic ray transform on the transversal manifold $(M_0,g_0)$ and let us proceed to recall its definition following \cite{Guillarmou_2017}, \cite{DKSaloU_2009}. The geodesics on $M_0$ can be parametrized by points on the unit sphere bundle $SM_0=\{(x,\xi)\in TM_0: |\xi|=1\}$.  Let 
\[
\p_\pm SM_0=\{ (x,\xi)\in SM_0: x\in \p M_0, \pm \langle\xi,  \nu(x) \rangle>0\},
\] 
be the incoming ($-$) and outgoing ($+$) boundaries of $SM_0$. Here  $\nu$ is the unit outer normal vector field to $\p M_0$. Here and in what follows $\langle \cdot,\cdot\rangle$ is the duality between $T^*M_0$ and $TM_0$.

Let $(x,\xi)\in \p_-SM_0$ and let $\gamma=\gamma_{x,\xi}(t)$ be the geodesic on $M_0$ such that $\gamma(0)=x$ and $\dot{\gamma}(0)=\xi$. Let us denote by $\tau(x,\xi)$ the first time when the geodesic  $\gamma$ exits $M_0$ with the convention that $\tau(x,\xi)=+\infty$ if the geodesic does not exit $M_0$. We define the incoming tail by
\[
\Gamma_-=\{(x,\xi)\in \p_-SM_0:\tau(x,\xi)=+\infty\}.
\]
When $f\in C(M_0,\C)$ and $\alpha\in C(M_0,T^*M_0)$ is a complex valued $1$-form, we define the geodesic ray transform on $(M_0,g_0)$  as follows
\[
I(f,\alpha)(x,\xi)=\int_0^{\tau(x,\xi)} \big[ f(\gamma_{x,\xi}(t))+ \langle \alpha(\gamma_{x,\xi}(t)), \dot{\gamma}_{x,\xi}(t) \rangle \big]dt, \quad (x,\xi)\in \p_-SM_0\setminus\Gamma_-.
\]

A unit speed geodesic segment $\gamma=\gamma_{x,\xi}:[0,\tau(x,\xi)]\to M_0$, $\tau(x,\xi)>0$, is called non-tangential if $\gamma(0),\gamma(\tau(x,\xi))\in \p M_0$, $\dot{\gamma}(0),\dot{\gamma}(\tau(x,\xi))$ are non-tangential vectors on $\p M_0$, and $\gamma(t)\in M_0^0$ for all $0<t<\tau(x,\xi)$.

\textbf{Assumption 1.} \textit{We assume that the geodesic ray transform on $(M_0,g_0)$ is injective in the sense that  if $I(f,\alpha)(x,\xi)=0$ for all $(x,\xi)\in \p_-SM_0\setminus\Gamma_-$  such that  $\gamma_{x,\xi}$ is a non-tangential geodesic then  $f=0$ and $\alpha=dp$ in $M_0$ for some $p\in C^1(M_0,\C)$ with $p|_{\p M_0}=0$}. 

The second principal result of this paper is the following generalization of \cite{DKuLS_2016} and \cite{Cekic} to the case of continuous magnetic potentials. In \cite{DKuLS_2016} it is assumed that $A=0$ and $q$ is continuous, while in \cite{Cekic} one considers  $A\in C^\infty$ and $q=0$. 

\begin{thm}
\label{thm_main_2}
Let $(M,g)$ be a conformally transversally anisotropic manifold such that  Assumption 1 holds for the transversal manifold.  Let $A^{(1)}, A^{(2)}\in C(M, T^*M)$ be complex valued $1$-forms, and $q^{(1)},q^{(2)}\in L^\infty(M,\C)$.  
If $C_{g,A^{(1)},q^{(1)}}=C_{g, A^{(2)},q^{(2)}}$, then $dA^{(1)}=dA^{(2)}$. Assuming furthermore that $q^{(1)}=q^{(2)}$, we have
\[
A^{(2)}=A^{(1)}-iF^{-1}dF,
\]
 for some $F\in C^1(M,\C)$ non-vanishing with $F|_{\p M}=1$. 
 \end{thm}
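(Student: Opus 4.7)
The proof would follow the complex geometric optics (CGO) strategy developed for transversally anisotropic geometries in \cite{DKuLS_2016} and \cite{Cekic}, with the crucial new ingredient being the Carleman estimate with a gain of two derivatives established in the first part of this paper, needed to accommodate the low regularity of $A$. Using the conformal covariance of the magnetic Schr\"odinger operator, I would first reduce to the case of a product metric $g = e \oplus g_0$ on $\R \times M_0$, with modified electric potentials that remain in $L^\infty$. Next, for each non-tangential unit-speed geodesic $\gamma$ on $(M_0, g_0)$ and each frequency $\la \in \R$, I would construct Gaussian beam CGO solutions
\[
u_j = e^{-s\varphi}(a_j + r_j), \qquad j = 1, 2,
\]
with $\varphi(x) = \pm x_1$, $s = 1/h + i\la$, where the WKB amplitudes $a_j$ concentrate on the lift $\R \times \gamma$ and the remainders $r_j$ are controlled via the two-derivative-gain Carleman estimate after mollifying the continuous magnetic potentials in $L^2$, in the spirit of \cite{Krup_Uhlmann_magnet}.

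Substituting $u_1$ (solving $L_{g, A^{(1)}, q^{(1)}} u_1 = 0$) together with a CGO solution $u_2$ of the adjoint-type equation $L_{g, \overline{A^{(2)}}, \overline{q^{(2)}}} u_2 = 0$ into the integral identity
\[
\int_M \bigl[i\langle A^{(1)} - A^{(2)}, u_1\, d\overline{u_2} - \overline{u_2}\, du_1\rangle_g + (|A^{(1)}|_g^2 - |A^{(2)}|_g^2 + q^{(1)} - q^{(2)}) u_1 \overline{u_2}\bigr]\, dV = 0
\]
implied by $C_{g, A^{(1)}, q^{(1)}} = C_{g, A^{(2)}, q^{(2)}}$, and sending $h \to 0$, the leading-order contribution comes from the magnetic term. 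Concentration on $\R \times \gamma$ reduces the limit, after passing to the $\la$-parametrized Fourier side in $x_1$, to the attenuated geodesic ray transform on $(M_0, g_0)$ of the $x_1$-Fourier coefficient of $A^{(1)} - A^{(2)}$, split into its longitudinal (scalar) and transversal ($1$-form) components. Since $A^{(1)} - A^{(2)}$ is only continuous, a smoothing and duality argument as in \cite{DKSalo_2013} and \cite{Assylbekov_Yang_2017} is needed to bring Assumption~1 to bear for each fixed $\la$, yielding that the scalar part vanishes and the $1$-form part equals $dp_\la$ with $p_\la|_{\p M_0} = 0$. Reassembling in $\la$ gives $dA^{(1)} = dA^{(2)}$ on $M$.

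For the gauge-equivalence conclusion under the additional assumption $q^{(1)} = q^{(2)}$, I would assemble the family $\{p_\la\}$, or equivalently integrate the now closed $1$-form $A^{(1)} - A^{(2)}$ along paths emanating from $\p M$, to produce a function $p \in C^1(M, \C)$ with $p|_{\p M} = 0$, and set $F = e^{-ip}$; this gives $F \in C^1(M, \C)$ non-vanishing with $F|_{\p M} = 1$ and $A^{(2)} = A^{(1)} - iF^{-1} dF$. The main technical obstacle I expect lies in the first step: controlling the interaction between the Gaussian-beam amplitudes, which have narrow transversal concentration, and the continuous-only magnetic potentials, so that the Carleman remainder estimate with gain of two derivatives still delivers an $o(1)$ error as $h \to 0$. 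This requires a careful calibration of the $L^2$-mollification scale of $A$ against the beam width, a step that has no counterpart in the smooth setting of \cite{Cekic} and represents the principal departure from the existing literature.
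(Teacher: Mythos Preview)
Your overall strategy for the first conclusion ($dA^{(1)}=dA^{(2)}$) is close to the paper's, but you omit one essential step and misidentify another. First, the paper uses a boundary reconstruction argument (Appendix~\ref{sec_boundary_rec}) to show that the tangential components of $A^{(1)}-A^{(2)}$ vanish on $\partial M$; this is what allows $\tilde A=A^{(1)}-A^{(2)}$ to be extended \emph{continuously} by zero to $\R\times M_0$, without which your concentration argument does not close. Second, Assumption~1 is an injectivity hypothesis for the \emph{unattenuated} transform $I(f,\alpha)$, so it cannot be applied directly at each fixed $\lambda$ as you suggest. Instead (following \cite{DKuLS_2016}, \cite{Cekic}) the paper evaluates the resulting identity at $\lambda=0$, then differentiates repeatedly in $\lambda$ and integrates by parts, invoking Assumption~1 at each stage; the duality/ellipticity arguments of \cite{DKSalo_2013} and \cite{Assylbekov_Yang_2017} are not used here. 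Note also that since $A$ is continuous, the mollification is in $L^\infty$ rather than $L^2$ (Proposition~\ref{prop_approximation_cont}), which matters because the Gaussian beam amplitude is supported in a shrinking tube.

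The genuine gap is in your gauge argument. A conformally transversally anisotropic manifold $M$ need not be simply connected, so the closed $1$-form $\tilde A$ can have nontrivial periods and your definition $p(x)=\int_{\partial M}^x \tilde A$ is in general multivalued; setting $F=e^{-ip}$ is well-defined only if $\int_\gamma \tilde A\in 2\pi\Z$ for every loop $\gamma$. This is exactly the content of Proposition~\ref{prop_holonomy}, which the paper proves by a separate argument: using the boundary determination again to reduce $\tilde A$ to have vanishing tangential part on $\partial M$, then comparing solutions $u_1$ and $e^{-i\varphi}u_2$ (with $\varphi=\int \tilde A$ on a simply connected tube around an embedded path) via the equality of Cauchy data and a unique continuation result (Proposition~\ref{prop_unique}). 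Only once this holonomy triviality is established can one define $F$ by \eqref{eq_holonomy_2}. Your proposal skips this entirely, so as written it does not yield the gauge conclusion on a general CTA manifold.
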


Let us now proceed to give some examples of non-simple manifolds satisfying Assumption 1. 

\begin{itemize}

\item In \cite{Uhl_V_2016}, \cite{Stef_Uhl_Vas} the injectivity of  the geodesic ray transform is obtained when $M_0$ is a compact Riemannian manifold with strictly convex boundary, foliated by strictly convex hypersurfaces $\Sigma_t$, $0\le t<T$, such that $M_0\setminus \cup_{t\in [0,T)}\Sigma_t$ has measure zero, for $f\in L^2(M_0)$ and $\alpha\in L^2(M_0, T^*M_0)$. Furthermore, if $M_0\setminus \cup_{t\in [0,T)}\Sigma_t$ has empty interior the injectivity holds for $f\in C(M_0)$ and $\alpha\in C(M_0, T^*M_0)$. 

\item In \cite{Guillarmou_2017} the injectivity of the geodesic ray transform is established when $M_0$ is a compact Riemannian manifold with strictly convex boundary such that the geodesic flow has no conjugate points and the trapped set is hyperbolic, and for $f\in L^p(M_0)+H^{-1/2}_{\text{comp}}(M_0^0)$ with $p>2$ and $\alpha\in C^\infty(M_0,T^*M_0)+H^{-1/2}_{\text{comp}}(M_0,T^*M_0)$.  As an example of such manifold $M_0$ one can consider a manifold with negative sectional curvature and strictly convex boundary. 

\end{itemize}

When proving Theorem \ref{thm_main_2}, we still exploit the existence of the natural Carleman weight $\varphi(x)=x_1$ on $M$ and the corresponding Carleman estimate with a gain of two derivatives.  On the other hand, since  the transversal manifold $M_0$ need not be simple, complex geometric optics solutions can no longer easily be constructed by means of a global WKB method. Following  \cite{DKuLS_2016}, when constructing complex geometric optics solutions, we replace global WKB quasimodes by Gaussian beam quasimodes, localized near non-tangential geodesics on the transversal manifold.  As we know from the Euclidean case \cite{Krup_Uhlmann_magnet}, for the solution of the inverse problem, it suffices to construct  $o(h)$--quasimodes for the semiclassically rescaled conjugated magnetic Schr\"odinger operator, and we carry out this construction for continuous magnetic potentials, combining the techniques of  \cite{DKuLS_2016} with those based on regularization. 
Exploiting the concentration properties of the corresponding complex geometric optics solutions together with Assumption 1, we conclude, similarly to  \cite{Cekic},  that $dA^{(1)}=dA^{(2)}$. Here we also make use of a boundary reconstruction result for continuous magnetic potentials. 

Finally, assuming that $q^{(1)}=q^{(2)}$, using parallel transport along loops in $M$, boundary reconstruction of the magnetic potential, and unique continuation arguments, as in \cite{Guillarmou_Zhou} and  \cite{Cekic}, we show that the fluxes of the magnetic potentials $A^{(1)}$ and $A^{(2)}$ satisfy
\[
\int_\gamma(A^{(1)}-A^{(2)})\in 2\pi \Z,
\]
when $\gamma$ is a  loop on $M$, allowing us to construct the gauge $F$ and thus, obtain the second statement in Theorem \ref{thm_main_2}.

The plan of the paper is as follows. In Section \ref{sec_Carleman_estimates}, we derive Carleman estimates with a gain of two derivatives, leading to a solvability result for the conjugated magnetic Schr\"odinger operator. Complex geometric optics solutions for the magnetic Schr\"odinger operator with bounded potentials on an admissible manifold are constructed in Section \ref{sec_CGO_admissible}, and the proof of Theorem \ref{thm_main} is concluded in Section \ref {sec_proof_thm_main_1}.  Section \ref{sec_Gaussian_beam} is concerned with the construction of Gaussian beam quasimodes on a conformally transversally anisotropic manifold, and in Section \ref{sec_CGO_based_quasi}  complex geometric optics solutions on such manifolds are obtained. The magnetic field is determined in Section  \ref{sec_thm_main_2},  thereby establishing the first part of Theorem \ref{thm_main_2}. The proof of Theorem \ref{thm_main_2}  is concluded in Section \ref{sec_hol} where the existence of a gauge is proved.  Finally, in Appendix \ref{sec_boundary_rec} the boundary determination of a continuous magnetic potential on a compact manifold with boundary, from the set of the Cauchy data, is shown.

\section{Carleman estimates with a gain of two derivatives}
\label{sec_Carleman_estimates}

The purpose of this section is to prove a Carleman estimates with a gain of two derivatives for $-h^2\Delta$ on a Riemannian manifold admitting limiting Carleman weights. This can be viewed as an extension of \cite[Lemma 2.1]{Salo_Tzou_2009} from the Euclidean setting  to that of  Riemannian manifolds. 

Let $(M, g)$ be a compact smooth Riemannian manifold with boundary.  Assume that $(M,g)$ is  embedded in a compact smooth manifold $(N,g)$ without boundary of the same dimension, and let $U$ be open in $N$ such that $M\subset U$.  In the discussion below it will be convenient to rely on the standard calculus of semiclassical pseudodifferential operators on $N$, obtained by quantizing the Kohn--Nirenberg symbol classes $S^m(T^*N)$, given by 
\[
S^m(T^*N)=\{a(x,\xi;h)\in C^\infty(T^*N):|\p_x^\alpha\p_\xi^\beta a(x,\xi;h)|\le C_{\alpha\beta}\langle \xi\rangle^{m-|\beta|}\},
\]
$0<h\le 1$ and $m\in \R$.  The local formula for the standard $h$-quantization, 
\begin{equation}
\label{eq_Carleman_quant}
\text{Op}_h(a)u(x)=\frac{1}{(2\pi h)^n}\int\!\int e^{\frac{i}{h}(x-y)\cdot\xi}a(x,\xi;h) u(y)dyd\xi, \quad a\in S^m,
\end{equation}
defines a class of semiclassical pseudodifferential operators on $N$ which will be denoted by $\Psi^m(N)$. We shall fix a choice of 
the quantization map 
\[
\text{Op}_h: S^m(T^*N)\to \Psi^m(N), 
\]
given by  \eqref{eq_Carleman_quant}  in local coordinate charts.   We refer to \cite[Chapter 14]{Zworski_book} for the semiclassical pseudodifferential calculus on $N$. 

Let us also recall G{\aa}rding's inequality, which plays an important role below,  see \cite{Lebeau_Carleman}. When doing so, we let  $H^s(N)$, $s\in\R$, be the standard  Sobolev space, equipped with the natural semiclassical norm,
\[
\|u\|_{H^s_{\text{scl}}(N)}=\|(1-h^2\Delta)^{\frac{s}{2}} u\|_{L^2(N)}. 
\]

\begin{thm}
\label{thm_Garding}
Let $p\in S^m(T^*N)$ be such that there exists $C>0$ such that  
\[
\emph{\Re} p(x,\xi)\ge \frac{1}{C}\langle \xi \rangle^m, \quad (x,\xi)\in T^*N. 
\]
Then there exists $h_0>0$ such that  for $h\in (0,h_0]$, we have
\[
\emph{\Re} (\emph{\text{Op}}_h(p)u,u)_{L^2(N)}\ge \frac{1}{2C}\|u\|^2_{H^{\frac{m}{2}}_{\emph{\text{scl}}}(N)}, 
\]
for all $u\in C^\infty(N)$. 
\end{thm}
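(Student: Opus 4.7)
The plan is to establish this sharp G{\aa}rding-type inequality via the classical square-root trick from semiclassical symbolic calculus. First I would define
\[
a(x,\xi;h) := \bigl(\Re p(x,\xi;h) - \tfrac{1}{2C}\langle\xi\rangle^m\bigr)^{1/2}.
\]
The ellipticity hypothesis gives $\Re p - \tfrac{1}{2C}\langle\xi\rangle^m \geq \tfrac{1}{2C}\langle\xi\rangle^m > 0$, so the expression under the square root is smooth and uniformly elliptic of order $m$. An inductive application of the Fa\`a di Bruno formula then shows $a \in S^{m/2}(T^*N)$ uniformly in $h$.

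Next I would invoke the semiclassical symbolic calculus on $N$. Since $a$ is real-valued, the composition and adjoint formulas give
\[
\text{Op}_h(a)^* \text{Op}_h(a) = \text{Op}_h(a^2) + h R_1, \qquad \tfrac{1}{2}\bigl(\text{Op}_h(p) + \text{Op}_h(p)^*\bigr) = \text{Op}_h(\Re p) + h R_2,
\]
with $R_1, R_2 \in \Psi^{m-1}(N)$. Substituting $\Re p = a^2 + \tfrac{1}{2C}\langle\xi\rangle^m$ into the second identity and pairing with $u$ yields
\[
\Re(\text{Op}_h(p)u,u)_{L^2(N)} = \|\text{Op}_h(a) u\|_{L^2(N)}^2 + \tfrac{1}{2C}(\text{Op}_h(\langle\xi\rangle^m)u,u)_{L^2(N)} + h (Ru,u)_{L^2(N)},
\]
for some $R \in \Psi^{m-1}(N)$.

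To conclude, I would discard the non-negative square and identify the middle term with $\tfrac{1}{2C}\|u\|^2_{H^{m/2}_{\text{scl}}(N)} = \tfrac{1}{2C}((1-h^2\Delta)^{m/2}u,u)$ modulo an $h\Psi^{m-1}(N)$ remainder, using the uniform equivalence of the positive elliptic symbols $\langle\xi\rangle^m$ and $(1+|\xi|^2_g)^{m/2}$ on the compact manifold $N$. Since $\langle h\xi\rangle \geq 1$ one has $\|u\|_{H^{s}_{\text{scl}}(N)} \leq \|u\|_{H^{s'}_{\text{scl}}(N)}$ whenever $s \leq s'$, and so every $h\Psi^{m-1}(N)$-remainder is dominated by $C' h \|u\|^2_{H^{m/2}_{\text{scl}}(N)}$. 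Choosing $h_0$ small enough that $C' h_0 < \tfrac{1}{4C}$ allows these errors to be absorbed, leaving the desired lower bound $\tfrac{1}{2C}\|u\|^2_{H^{m/2}_{\text{scl}}(N)}$.

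The main obstacle is bookkeeping rather than insight: verifying $a \in S^{m/2}$ requires careful inductive derivative bounds via Fa\`a di Bruno, and more delicately, one must track constants when passing from the scalar estimate $\Re p \geq \tfrac{1}{C}\langle\xi\rangle^m$ (interpreted either chart-by-chart or via a fixed background structure) to the intrinsic semiclassical Sobolev norm defined by $(1-h^2\Delta)^{m/2}$ on $N$. A partition-of-unity argument together with the uniform equivalence of the relevant elliptic symbols handles this cleanly, explaining the factor $\tfrac{1}{2C}$ (as opposed to $\tfrac{1}{C}$) built into the statement to accommodate the absorbed remainders.
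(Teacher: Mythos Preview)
The paper does not actually prove this theorem: it is stated as a recalled background fact with a citation to Lebeau's lecture notes (\textit{``Let us also recall G{\aa}rding's inequality, which plays an important role below, see \cite{Lebeau_Carleman}''}). There is therefore no proof in the paper to compare against; your proposal supplies what the paper simply quotes.

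Your argument is the standard square-root construction and is essentially correct. Two minor points of bookkeeping are worth tightening. First, the constant: after dropping $\|\text{Op}_h(a)u\|^2\ge 0$ and bounding the $h\Psi^{m-1}$ remainders you arrive at $\Re(\text{Op}_h(p)u,u)\ge (\tfrac{1}{2C}-C'h)\|u\|^2_{H^{m/2}_{\text{scl}}}$, which is strictly below $\tfrac{1}{2C}\|u\|^2$ for every $h>0$. The fix is to use the slack in the hypothesis: take instead $a=\bigl(\Re p-\tfrac{3}{4C}\langle\xi\rangle^m\bigr)^{1/2}$, which is still a genuine symbol in $S^{m/2}$ since $\Re p\ge \tfrac{1}{C}\langle\xi\rangle^m$; the main term then carries the coefficient $\tfrac{3}{4C}$ and the $\mathcal{O}(h)$ error can be absorbed down to $\tfrac{1}{2C}$ for $h$ small. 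Second, on a manifold $\langle\xi\rangle$ should be read as $(1+|\xi|_g^2)^{1/2}$ with respect to the fixed metric $g$ on $N$; with that convention $\langle\xi\rangle^m$ is the full semiclassical symbol of $(1-h^2\Delta)^{m/2}$ up to $hS^{m-1}$, and your identification of $(\text{Op}_h(\langle\xi\rangle^m)u,u)$ with $\|u\|^2_{H^{m/2}_{\text{scl}}}$ modulo an $h\Psi^{m-1}$ error is then legitimate. If instead one interprets $\langle\xi\rangle$ chart-by-chart in Euclidean coordinates, the two symbols differ already at principal level and the error is $\Psi^{m-2}$ with no $h$, which would not be absorbable; your remark about ``uniform equivalence of the positive elliptic symbols'' suggests you may be conflating these, so it is worth being explicit.
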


Let  $\varphi\in C^\infty(U,\R)$ and let us consider the conjugated operator 
\begin{equation}
\label{eq_Car_-2}
P_\varphi=e^{\frac{\varphi}{h}}(-h^2\Delta)e^{-\frac{\varphi}{h}}=-h^2\Delta -|\nabla \varphi|^2+2\langle \nabla \varphi, h\nabla\rangle +h\Delta\varphi,
\end{equation}
with the semiclassical principal symbol 
\begin{equation}
\label{eq_Car_1}
p_\varphi=|\xi|^2-|d\varphi|^2+2i \langle \xi, d\varphi\rangle\in C^\infty(T^*U).
\end{equation}
Here and in what follows we use $\langle \cdot, \cdot \rangle$ and $|\cdot|$ to denote the Riemannian scalar product and norm both on the tangent and cotangent space.  

When $(x,\xi)\in T^*M$ and $|\xi|\ge C\gg 1$, we have that $|p_\varphi(x,\xi)|\sim |\xi|^2$ so that $P_\varphi$ is elliptic at infinity in the semiclassical sense. Following \cite{Kenig_Sjostrand_Uhlmann},  \cite{DKSaloU_2009}, we say that $\varphi\in C^\infty(U,\R)$ is a limiting Carleman weight for $-h^2\Delta$ on $(U,g)$ if $d\varphi\ne 0$ on $U$, and the Poisson bracket of $\Re p_\varphi$ and $\Im p_\varphi$ satisfies,
\[
\{\Re p_\varphi, \Im p_\varphi\}=0 \quad\text{when}\quad  p_\varphi=0. 
\]
We refer to \cite{DKSaloU_2009} for a characterization of Riemannian manifolds admitting limiting Carleman weights.

The following is the main result of this section.  
\begin{prop}
\label{prop_Carleman_est_gain_2}
Let $\varphi$ be a limiting Carleman weight for $-h^2\Delta$ on $(U,g)$ and let $\tilde \varphi=\varphi+\frac{h}{2\varepsilon}\varphi^2$. Then for all $0<h\ll \varepsilon\ll 1$ and $s\in \R$, we have
\begin{equation}
\label{eq_Car_for_laplace}
\frac{h}{\sqrt{\varepsilon}}\|u\|_{H^{s+2}_{\emph{\text{scl}}}(N)}\le C\|e^{\frac{\tilde \varphi }{h}}(-h^2\Delta)e^{-\frac{\tilde \varphi}{h}}  u\|_{H^{s}_{\emph{\text{scl}}}(N)}, \quad C>0,
\end{equation}
for all $u\in C_0^\infty(M^0)$. 
\end{prop}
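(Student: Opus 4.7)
The plan is to adapt the Euclidean argument of \cite{Salo_Tzou_2009} (Lemma 2.1 there) to the manifold setting using the semiclassical pseudodifferential calculus on the closed ambient manifold $N$. I will proceed in three stages: first derive an $L^2(N)\to L^2(N)$ Carleman estimate with the desired $h/\sqrt\varepsilon$ gain for $P_{\tilde\varphi}:=e^{\tilde\varphi/h}(-h^2\Delta)e^{-\tilde\varphi/h}$; then exploit the semiclassical ellipticity of $P_{\tilde\varphi}$ for large $|\xi|$ to produce a two-derivative gain on the left; and finally pass to arbitrary Sobolev order $s$ by conjugating with $\Lambda^s:=(1-h^2\Delta)^{s/2}\in\Psi^s(N)$.

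For the $L^2$ estimate, I decompose $P_{\tilde\varphi}=A+iB$ into its self-adjoint and anti-self-adjoint parts, $A=\tfrac{1}{2}(P_{\tilde\varphi}+P_{\tilde\varphi}^*)$ and $iB=\tfrac{1}{2}(P_{\tilde\varphi}-P_{\tilde\varphi}^*)$, so that
\[
\|P_{\tilde\varphi}u\|_{L^2}^2=\|Au\|_{L^2}^2+\|Bu\|_{L^2}^2+(i[A,B]u,u)_{L^2}.
\]
Since $d\tilde\varphi=(1+h\varphi/\varepsilon)d\varphi$, the principal symbol of $P_{\tilde\varphi}$ is $p_{\tilde\varphi}=|\xi|^2-(1+h\varphi/\varepsilon)^2|d\varphi|^2+2i(1+h\varphi/\varepsilon)\langle\xi,d\varphi\rangle$. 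Using the limiting Carleman weight property $\{\Re p_\varphi,\Im p_\varphi\}=0$ on $\{p_\varphi=0\}$, a direct computation produces the decomposition
\[
\{\Re p_{\tilde\varphi},\Im p_{\tilde\varphi}\}=\frac{h}{\varepsilon}q_0+f_1\,\Re p_{\tilde\varphi}+f_2\,\Im p_{\tilde\varphi}+O\!\left(\frac{h^2}{\varepsilon^2}\right),
\]
with $q_0\ge c_0|d\varphi|^4>0$ and $f_1,f_2\in C^\infty(U)$. Noting that $(i[A,B]u,u)_{L^2}=h(\text{Op}_h(\{\Re p_{\tilde\varphi},\Im p_{\tilde\varphi}\})u,u)_{L^2}+O(h^2)\|u\|_{L^2}^2$, I apply G\aa rding's inequality (Theorem \ref{thm_Garding}) to $q_0$ to extract a positive $(h^2/\varepsilon)\|u\|_{L^2}^2$ contribution, absorb the $f_j\,\Re p_{\tilde\varphi}$ and $f_j\,\Im p_{\tilde\varphi}$ terms by $\|Au\|^2+\|Bu\|^2$ via Cauchy--Schwarz, and absorb the $O(h^2/\varepsilon^2)$ remainder by the leading $h^2/\varepsilon$ in the regime $h\ll\varepsilon\ll 1$. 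This yields $\frac{h^2}{\varepsilon}\|u\|_{L^2}^2\le C\|P_{\tilde\varphi}u\|_{L^2}^2$.

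To gain two derivatives, the bound $|p_{\tilde\varphi}(x,\xi)|\gtrsim\langle\xi\rangle^2$ for $|\xi|\ge R$ provides a semiclassical parametrix $E\in\Psi^{-2}(N)$ with $EP_{\tilde\varphi}=I-K$, $K\in\Psi^{-\infty}(N)$ of compact microsupport, so
\[
\|u\|_{H^2_{\text{scl}}(N)}\le C\|P_{\tilde\varphi}u\|_{L^2(N)}+C\|u\|_{L^2(N)}.
\]
Multiplying by $h/\sqrt\varepsilon$ and invoking the $L^2$ estimate on the last term gives $\frac{h}{\sqrt\varepsilon}\|u\|_{H^2_{\text{scl}}}\le C\|P_{\tilde\varphi}u\|_{L^2}$. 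To reach arbitrary $s$, I apply this to $\Lambda^s u$; the commutator $[P_{\tilde\varphi},\Lambda^s]\Lambda^{-s}\in h\Psi^1(N)$ produces an error $Ch\|u\|_{H^{s+1}_{\text{scl}}}\le Ch\|u\|_{H^{s+2}_{\text{scl}}}$, which is absorbed into $\frac{h}{\sqrt\varepsilon}\|u\|_{H^{s+2}_{\text{scl}}}$ once $\sqrt\varepsilon$ is chosen small enough. This produces \eqref{eq_Car_for_laplace}.

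The principal obstacle is the invariant Poisson bracket computation: one must verify that under the $h$-dependent convexification $\tilde\varphi=\varphi+\frac{h}{2\varepsilon}\varphi^2$, the bracket $\{\Re p_{\tilde\varphi},\Im p_{\tilde\varphi}\}$ picks up a strictly positive $h/\varepsilon$ multiple of $|d\varphi|^4$ modulo multiples of $\Re p_{\tilde\varphi}$ and $\Im p_{\tilde\varphi}$, with symbolic remainders no worse than $O(h^2/\varepsilon^2)$, and that the subprincipal discrepancies between $A,B$ and the quantizations of $\Re p_{\tilde\varphi},\Im p_{\tilde\varphi}$ do not disturb this balance. The two-parameter regime $h\ll\varepsilon\ll 1$ appearing in the statement is what makes the positivity dominate all error terms arising from the convexification and from passing between Weyl and Kohn--Nirenberg quantizations on $N$.
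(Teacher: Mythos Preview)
Your approach is valid but takes a genuinely different route from the paper's. The paper first makes a conformal reduction (using $(U,cg)$ with $c=|\nabla_g\varphi|^2_g$) so that $\varphi$ becomes simultaneously a limiting Carleman weight \emph{and} a distance function; then \cite[Lemma~2.5]{DKSaloU_2009} gives $D^2\varphi\equiv 0$, whence $\{\Re p_\varphi,\Im p_\varphi\}\equiv 0$ on all of $T^*U$, not merely on the characteristic set. Under convexification this yields the exact identity $\{\Re p_{\tilde\varphi},\Im p_{\tilde\varphi}\}=4\frac{h}{\varepsilon}(1+\frac{h}{\varepsilon}\varphi)^2+\beta(\Im p_{\tilde\varphi})^2$, with no $\Re p_{\tilde\varphi}$ correction term at all. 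The paper then rewrites $h\{\Re p_{\tilde\varphi},\Im p_{\tilde\varphi}\}$ so as to introduce an elliptic symbol $d\in S^4$ and applies G{\aa}rding's inequality directly to $d$, obtaining the $H^2_{\rm scl}$ gain in one stroke. Your argument instead keeps the generic limiting Carleman weight hypothesis, factors the bracket as $\frac{h}{\varepsilon}q_0+f_1\Re p_{\tilde\varphi}+f_2\Im p_{\tilde\varphi}+\cdots$, obtains only an $L^2$ estimate from G{\aa}rding on the order-zero $q_0$, and then recovers the two-derivative gain by a separate parametrix step using ellipticity at infinity. Both reach the same endpoint; the paper's conformal reduction buys a cleaner symbol identity and avoids the two-step $L^2\to H^2$ upgrade, while your route is closer to the original argument in \cite{DKSaloU_2009} and does not require the distance-function normalization.

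Two technical points deserve attention. First, your factorization coefficients $f_1,f_2$ cannot both lie in $C^\infty(U)$: since $\{\Re p_\varphi,\Im p_\varphi\}$ is quadratic in $\xi$ while $\Im p_\varphi$ is linear, one of the coefficients must be a first-order symbol in $\xi$ (cf.\ \cite[Lemma~2.4]{DKSaloU_2009}); this affects how the absorption by $\|Au\|^2+\|Bu\|^2$ is carried out. Second, and more substantively, your shift to general $s$ by ``applying this to $\Lambda^s u$'' has a genuine support problem: the $s=0$ estimate was proved for $u\in C_0^\infty(M^0)$, but $\Lambda^s u$ is no longer supported in $M^0$. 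The paper addresses this by applying the $s=0$ estimate to $\chi J^s u$ for a cutoff $\chi\in C_0^\infty(U)$ with $\chi=1$ near $M$, and then uses pseudolocality (the fact that $(1-\chi)J^s\psi=\mathcal{O}(h^\infty)$ in any Sobolev norm when $\chi=1$ near $\hbox{supp }\psi$) to control both $(1-\chi)J^s u$ and the commutator $[P_{\tilde\varphi},\chi]J^s u$. You need this localization argument, or an equivalent one, to make the passage to arbitrary $s$ rigorous.
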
 

\begin{proof} 

We shall first establish \eqref{eq_Car_for_laplace} in the case $s=0$. Let us explain that when doing so we can assume that the limiting Carleman weight $\varphi$ on $(U,g)$ is also a distance function in the sense that 
\begin{equation}
\label{eq_conformal_prop_0}
|\nabla\varphi|=1. 
\end{equation}
Indeed,  by  \cite[Lemma 2.1]{DKSaloU_2009} we know that  $\varphi$ is a limiting Carleman weight on the conformal manifold $(U,cg)$ with $0<c\in C^\infty(U)$.  Taking $c=|\nabla_g \varphi|_g^2$, we see that $|\nabla_{cg} \varphi|_{cg}=1$, and hence, the Carleman weight $\varphi$ is also a distance function on $(U,cg)$. Assume now that we have proved that  for all $0<h\ll \varepsilon\ll 1$ and $s\in \R$, we have
\begin{equation}
\label{eq_conformal_prop_1}
\frac{h}{\sqrt{\varepsilon}}\|u\|_{H^{2}_{\text{scl}}(N)}\le C\|e^{\frac{\tilde \varphi }{h}}(-h^2\Delta_{cg})e^{-\frac{\tilde \varphi}{h}}  u\|_{L^2(N)}, \quad C>0,
\end{equation}
for all $u\in C_0^\infty(M^0)$.  By the conformal properties of the Laplace--Beltrami operator, we have
\begin{equation}
\label{eq_conformal_prop_2}
c^{-\frac{n+2}{4}} (-\Delta_g) (c^{\frac{(n-2)}{4}} u)=-\Delta_{c g} u+q_c u, \quad q_c=c^{-\frac{n+2}{4}} (-\Delta_g) (c^{\frac{(n-2)}{4}})\in C^\infty(U).
\end{equation}
Combining  \eqref{eq_conformal_prop_1} and \eqref{eq_conformal_prop_2}, we obtain \eqref{eq_conformal_prop_1} for  the operator $-h^2\Delta_g$.

In what follows when proving \eqref{eq_Car_for_laplace} in the case $s=0$,   we assume therefore that the limiting Carleman weight  $\varphi$ satisfies \eqref{eq_conformal_prop_0}. 
In view of \eqref{eq_Car_1}, we see that 
\begin{equation}
\label{eq_Car_2}
\begin{aligned}
\Re p_\varphi&=|\xi|^2-|d\varphi|^2=|\xi^\sharp|^2-|\nabla \varphi|^2,\\
\Im p_\varphi&=2\langle \xi, d\varphi\rangle =2\langle \nabla \varphi, \xi^\sharp\rangle,
\end{aligned}
\end{equation}
where the vector field  $\xi^\sharp$ is given by $\xi^\sharp=g^{jk}\xi_j \p_{x_k}$ in local coordinates. 

By \cite[Lemma 2.3]{DKSaloU_2009}, we have
\begin{equation}
\label{eq_Car_3}
\{\Re p_\varphi, \Im p_\varphi \}(x,\xi)=4D^2\varphi(\nabla \varphi, \nabla \varphi)+4D^2\varphi(\xi^\sharp, \xi^\sharp),
\end{equation}
where $D^2\varphi$ is the Hessian of $\varphi$. Recall from \cite[Appendix]{DKSaloU_2009} that the Hessian of a smooth function $\varphi$ is the symmetric $(2,0)$--tensor $D^2\varphi=Dd\varphi$, where $D$ is the Levi--Civita connection on $(M,g)$.

Since $\varphi$ is both a Carleman weight and a distance function, it follows from  \cite[Lemma 2.5]{DKSaloU_2009} that the Hessian satisfies
\[
D^2\varphi=0 \quad \text{on}\quad U,
\] 
and thus, \eqref{eq_Car_3} implies that $\{\Re p_\varphi, \Im p_\varphi \}(x,\xi)=0$ on $T^*U$. 

Consider now $\tilde \varphi$  instead of $\varphi$. We have
\begin{align*}
\nabla \tilde\varphi &=\bigg(1+\frac{h}{\varepsilon}\varphi\bigg)\nabla \varphi,\\
D^2\tilde \varphi&=\frac{h}{\varepsilon} d\varphi\otimes d\varphi + \bigg(1+\frac{h}{\varepsilon}\varphi\bigg) D^2\varphi=\frac{h}{\varepsilon} d\varphi\otimes d\varphi. 
\end{align*}
Therefore, using \eqref{eq_Car_2}, we get 
\begin{align*}
\Re p_{\tilde \varphi} &=|\xi^\sharp|^2- \bigg(1+\frac{h}{\varepsilon}\varphi\bigg)^2 |\nabla \varphi|^2,\\
\Im p_{\tilde \varphi} &=2 \bigg(1+\frac{h}{\varepsilon}\varphi\bigg) \langle \nabla \varphi, \xi^\sharp\rangle,
\end{align*}
and using  \eqref{eq_Car_3} and the fact that $|\nabla \varphi|=1$,  we also obtain that 
\begin{equation}
\label{eq_Car_4}
\begin{aligned}
\{\Re p_{\tilde \varphi}, \Im p_{\tilde \varphi} \}(x,\xi)&=4\frac{h}{\varepsilon}\bigg(1+\frac{h}{\varepsilon}\varphi\bigg)^2 (d\varphi\otimes d\varphi) (\nabla \varphi, \nabla \varphi)+  4\frac{h}{\varepsilon} (d\varphi\otimes d\varphi)(\xi^\sharp, \xi^\sharp) \\
&=4\frac{h}{\varepsilon}\bigg(1+\frac{h}{\varepsilon}\varphi\bigg)^2 |\nabla \varphi|^4+ 4\frac{h}{\varepsilon}\langle \nabla \varphi, \xi^\sharp\rangle^2\\
&=4\frac{h}{\varepsilon}\bigg(1+\frac{h}{\varepsilon}\varphi\bigg)^2+ \beta (\Im p_{\tilde \varphi} )^2.
\end{aligned}
\end{equation}
Here 
\[
\beta=\frac{h}{\varepsilon}\bigg(1+\frac{h}{\varepsilon}\varphi\bigg)^{-2},
\]
and we choose $\frac{h}{\varepsilon}\le \varepsilon_0<1$ with $\varepsilon_0$ small enough so that $(1+\frac{h}{\varepsilon}\varphi)\ge \frac{1}{2}$ on $M$. 

Let 
\begin{equation}
\label{eq_Car_5_0}
P_{\tilde \varphi}=e^{\frac{\tilde \varphi }{h}}(-h^2\Delta)e^{-\frac{\tilde \varphi}{h}}  
=-h^2\Delta -|\nabla \tilde \varphi|^2+2\langle \nabla \tilde \varphi, h\nabla\rangle +h\Delta\tilde \varphi= \tilde A+i \tilde B, 
\end{equation}
where $\tilde A$ and $\tilde B$ are formally self-adjoint operators on $L^2(U)$ given by 
\begin{equation}
\label{eq_Car_5_0-real-im}
\tilde A=-h^2\Delta -|\nabla \tilde \varphi|^2, \quad \tilde B=-2i\langle \nabla \tilde \varphi, h\nabla\rangle -i h\Delta\tilde \varphi.
\end{equation}
Letting  $u\in C_0^\infty(M^0)$, and integrating by parts, we get
\begin{equation}
\label{eq_Car_5}
\|P_{\tilde \varphi}u\|^2_{L^2(M)}=\|\tilde A u\|^2_{L^2(M)}+ \|\tilde B u\|^2_{L^2(M)}+ i([\tilde A, \tilde B]u, u)_{L^2(M)}. 
\end{equation}

We have  
\begin{equation}
\label{eq_Car_6}
i [\tilde A, \tilde B]=h\text{Op}_h( \{\Re p_{\tilde \varphi}, \Im p_{\tilde \varphi} \}   )+h^2 R_1, 
\end{equation}
where $R_1=\text{Op}_h(r_1)$ with $r_1\in S^1$ uniformly in $h$ and $\varepsilon$  in view of  \eqref{eq_Car_5_0} and \eqref{eq_Car_5_0-real-im}. 
Using  \eqref{eq_Car_4}, we can write
\begin{equation}
\label{eq_Car_7}
h \{\Re p_{\tilde \varphi}, \Im p_{\tilde \varphi} \}=4\frac{h^2}{\varepsilon}\bigg(1+\frac{h}{\varepsilon}\varphi\bigg)^2+ h  \beta (\Im p_{\tilde \varphi})^2=\frac{h^2}{\varepsilon} d + h  \beta (\Im p_{\tilde \varphi})^2-\frac{h^2}{\varepsilon} (\Re p_{\tilde \varphi})^2,
\end{equation}
where the symbol
\[
d=4\bigg(1+\frac{h}{\varepsilon}\varphi\bigg)^2+(\Re p_{\tilde \varphi})^2\in S^4(T^*U),
\]
satisfies 
\[
d(x,\xi)\ge \frac{1}{C_0}\langle \xi\rangle^4, \quad (x,\xi)\in T^*\tilde U
\]
with  $C_0>0$ independent of $\varepsilon$ and $h$. Here $M\subset\subset \tilde U\subset \subset U$, $\tilde U$ is open.    This follows from the fact that the symbol $(\Re p_{\tilde \varphi})^2$ is elliptic for $|\xi|$ large. 

An application of  G{\aa}rding's inequality, Theorem \ref{thm_Garding}, allows us to conclude that
\begin{equation}
\label{eq_Car_8}
(\text{Op}_h(d)u, u)_{L^2(N)}\ge \frac{1}{2C_0}\|u\|_{H^2_{\text{scl}}(N)}^2, \quad u\in C^\infty_0(M^0),
\end{equation}
for all $0<h<1$ small enough. 

Using \eqref{eq_Car_6}, \eqref{eq_Car_7}, and the fact that 
\[
\tilde A^2=\text{Op}_h((\Re p_{\tilde \varphi})^2)+hR_2, 
\]
where $R_2=\text{Op}_h(r_2)$ with $r_2\in S^3$ uniformly in $h$ and $\varepsilon$, 
 we obtain that 
\begin{equation}
\label{eq_Car_9}
i [\tilde A, \tilde B]=\frac{h^2}{\varepsilon} \text{Op}_h (d)+   h \tilde B\beta \tilde B- \frac{h^2}{\varepsilon} (\tilde A^2-hR_2) +h^2 R_3,
\end{equation}
where $R_3=\text{Op}_h(r_3)$  with $r_3\in S^1$ uniformly in $h$ and $\varepsilon$. 

Using \eqref{eq_Car_5},  \eqref{eq_Car_8}, and \eqref{eq_Car_9},  we get  that for all $0<h<1$ small enough, 
\begin{equation}
\label{eq_Car_10}
\begin{aligned}
\|P_{\tilde \varphi}u\|^2_{L^2(N)}&\ge \frac{h^2}{2C_0\varepsilon}\|u\|_{H^2_{\text{scl}}(N)}^2+\|\tilde A u\|^2_{L^2(N)}+ \|\tilde B u\|^2_{L^2(N)}
-\mathcal{O}(h)\|\tilde B u\|^2_{L^2(N)}\\
&-\frac{h^2}{\varepsilon}\|\tilde A u \|^2_{L^2(N)}- \mathcal{O}(h^2)\|u\|^2_{H^2_{\text{scl}}(N)}.
\end{aligned}
\end{equation}
Here we have used that 
\[
| (R_j u, u)_{L^2(N)}|\le \|R_j u\|_{H^{-2}_{\text{scl}}(N)}\|u\|_{H^2_{\text{scl}}(N)}\le \mathcal{O}(1)\|u\|^2_{H^2_{\text{scl}}(N)}, \quad j=2,3,
\]
uniformly in $h$.  

Now we conclude from \eqref{eq_Car_10} that for all $0<h\ll \varepsilon\ll 1$, 
\begin{equation}
\label{eq_Car_11}
\|P_{\tilde \varphi}u\|^2_{L^2(N)}\ge  \frac{h^2}{C\varepsilon}\|u\|_{H^2_{\text{scl}}(N)}^2, \quad u\in C^\infty_0(M^0). 
\end{equation}
This completes the proof of \eqref{eq_Car_for_laplace} in the case $s=0$.

We shall next establish \eqref{eq_Car_for_laplace} for an arbitrary $s\in \R$. In doing so let us set 
\[
J^s=(1-h^2\Delta)^{\frac{s}{2}}, \quad s\in \R,
\]
defined by means of the spectral theorem, and let us recall the basic fact that  $J^s\in \text{Op}_h(S^s(T^*N))$, see \cite[Proposition 10.1]{Sjostrand_2009}. We then have the following
pseudolocal estimate: if $\psi,\chi\in C^\infty(N)$ with $\chi=1$ near $\text{supp} (\psi)$ and if $s,\alpha, \beta\in \R$, then 
\begin{equation}
\label{eq_Car_12}
(1-\chi)J^s\psi =\mathcal{O}(h^\infty): H^\alpha_{\text{scl}}(N)\to H^\beta_{\text{scl}}(N).
\end{equation}

Let $\chi\in C^\infty_0(U)$ be such that $\chi=1$ near $M$. Then using \eqref{eq_Car_11} for functions supported on a slightly larger set than $M^0$, as well as \eqref{eq_Car_12},   we have for all $0<h\ll \varepsilon\ll 1$ and  $u\in C^\infty_0(M^0)$, 
\begin{equation}
\label{eq_Car_13}
\begin{aligned}
h\|u\|_{H^{s+2}_{\text{scl}}(N)} & \le   h\| \chi J^s u\|_{H^{2}_{\text{scl}}(N)} +h\| (1- \chi) J^s u\|_{H^{2}_{\text{scl}}(N)}\\
&\le
\mathcal{O}(\sqrt{\varepsilon})\| P_{\tilde \varphi} \chi J^s u\|_{L^2 (N)} +\mathcal{O}(h^\infty)\|u\|_{H^{s+2}_{\text{scl}}(N)} \\
& \le \mathcal{O}(\sqrt{\varepsilon})\big (\|  \chi P_{\tilde \varphi}  J^s u\|_{L^2(N)} + \| [P_{\tilde \varphi} ,  \chi ] J^s u\|_{L^2(N)}\big) +\mathcal{O}(h^\infty)\|u\|_{H^{s+2}_{\text{scl}}(N)} \\
&\le \mathcal{O}( \sqrt{\varepsilon})\|  \chi P_{\tilde \varphi}  J^s u\|_{L^2(N)} +\mathcal{O}(h^\infty)\|u\|_{H^{s+2}_{\text{scl}}(N)}. 
\end{aligned}
\end{equation}
Here we have used that 
\[
\| [P_{\tilde \varphi} ,  \chi ] J^s u\|_{L^2(N)}\le \mathcal{O}(h^\infty)\|u\|_{H^{s+2}_{\text{scl}}(N)},
\]
which is a consequence of the pseudolocality \eqref{eq_Car_12}. By absorbing the error term $\mathcal{O}(h^\infty) \|u\|_{H^{s+2}_{\text{scl}}(N)}$ in the left hand side of \eqref{eq_Car_13}, for all $0<h\ll \varepsilon\ll 1$, we get 
\begin{equation}
\label{eq_Car_14}
h\|u\|_{H^{s+2}_{\text{scl}}(N)}\le \mathcal{O}( \sqrt{\varepsilon})(\|  \chi   J^s P_{\tilde \varphi} u\|_{L^2(N)} + \|  \chi  [P_{\tilde \varphi},  J^s ] u\|_{L^2(N)}).
\end{equation}

When estimating the last term in  \eqref{eq_Car_14}, we extend $\varphi$ smoothly outside of $U$, and use the fact that $ \chi  [P_{\tilde \varphi},  J^s ]\in h\text{Op}_h(S^{s+1}(T^*N))$. Hence, for all $0<h\ll \varepsilon\ll 1$,
\begin{equation}
\label{eq_Car_15}
\chi  [P_{\tilde \varphi},  J^s ]=\mathcal{O}(h): H^{s+1}_{\text{scl}}(N)\to L^2(N).
\end{equation}
It follows from \eqref{eq_Car_14} and \eqref{eq_Car_15} that 
\[
h\|u\|_{H^{s+2}_{\text{scl}}(N)}\le \mathcal{O} (\sqrt{\varepsilon})\|   J^s P_{\tilde \varphi} u\|_{L^2(N)} + \mathcal{O}(\sqrt{\varepsilon} h)\|u\|_{H^{s+1}_{\text{scl}}(N)},
\]
and hence, 
\[
h\|u\|_{H^{s+2}_{\text{scl}}(N)}\le \mathcal{O} (\sqrt{\varepsilon})\|   P_{\tilde \varphi} u\|_{H^{s}_{\text{scl}}(N)}.  
\]
This completes the proof of the proposition. 
\end{proof}

We shall next state the following Carleman estimate for the magnetic Schr\"odinger operator, which generalizes \cite[Proposition 2.2]{Krup_Uhlmann_magnet} to the Riemannian setting. 

\begin{prop}
\label{prop_Carleman_magnetic_mnfld}
Let $\varphi$ be a limiting Carleman weight for $-h^2\Delta$ on $(U,g)$ and let $A\in L^\infty(M, T^*M)$, $q\in L^\infty(M,\C)$. Then for all $0<h\ll 1$, we have
\begin{equation}
\label{eq_Car_for_magnetic}
h \|u\|_{H^{1}_{\emph{\text{scl}}}(N)}\le C\|e^{\frac{ \varphi }{h}}(h^2L_{g,A,q})e^{-\frac{\varphi}{h}}  u\|_{H^{-1}_{\emph{\text{scl}}}(N)}, \quad C>0,
\end{equation}
for all $u\in C_0^\infty(M^0)$. 
\end{prop}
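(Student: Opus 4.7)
The plan is to reduce to the Carleman estimate of Proposition~\ref{prop_Carleman_est_gain_2}, applied with $s=-1$, by treating the magnetic and electric terms in $h^2L_{g,A,q}$ as an $\mathcal{O}(h)$ perturbation of $-h^2\Delta$ on the level of the mapping $H^1_{\text{scl}}(N)\to H^{-1}_{\text{scl}}(N)$, and then removing the convexification of the weight.

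First I would conjugate $h^2L_{g,A,q}$ by $e^{\tilde\varphi/h}$ with $\tilde\varphi=\varphi+\frac{h}{2\varepsilon}\varphi^2$ and isolate the pure Laplacian. Using \eqref{eq_int_1}, we write
\[
h^2L_{g,A,q}=-h^2\Delta+ih\cdot(hd^*)(A\,\cdot)-ih\langle A,hd\,\cdot\rangle_g+h^2(\langle A,A\rangle_g+q).
\]
A direct distributional computation, based on $e^{\tilde\varphi/h}(hd)e^{-\tilde\varphi/h}=hd-d\tilde\varphi$ and its dual version, shows that
\[
e^{\tilde\varphi/h}\bigl(h^2L_{g,A,q}\bigr)e^{-\tilde\varphi/h}=e^{\tilde\varphi/h}(-h^2\Delta)e^{-\tilde\varphi/h}+R_{\tilde\varphi},
\]
where $R_{\tilde\varphi}$ is a sum of the four terms $ih\cdot(hd^*)(A\,\cdot)$, $-ih\langle A,hd\,\cdot\rangle_g$, $h\cdot iu\langle A,d\tilde\varphi\rangle_g$ (coming from the exponential hitting $d$ or $d^*$), and the $h^2$-multiplier by $\langle A,A\rangle_g+q$. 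Since $hd,hd^*\colon L^2(N)\to H^{-1}_{\text{scl}}(N)$ are uniformly bounded in $h$ and $\|d\tilde\varphi\|_{L^\infty}=\mathcal{O}(1)$ once $h/\varepsilon\le 1$, I obtain
\[
\|R_{\tilde\varphi}u\|_{H^{-1}_{\text{scl}}(N)}\le Ch\bigl(\|A\|_{L^\infty}+\|A\|_{L^\infty}^2+\|q\|_{L^\infty}\bigr)\|u\|_{H^1_{\text{scl}}(N)},
\]
uniformly for $u\in C_0^\infty(M^0)$ and $0<h\ll\varepsilon\ll 1$.

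Next, I apply Proposition~\ref{prop_Carleman_est_gain_2} with $s=-1$, which gives
\[
\frac{h}{\sqrt\varepsilon}\|u\|_{H^1_{\text{scl}}(N)}\le C\bigl\|e^{\tilde\varphi/h}(-h^2\Delta)e^{-\tilde\varphi/h}u\bigr\|_{H^{-1}_{\text{scl}}(N)}.
\]
Combining this with the triangle inequality and the bound on $R_{\tilde\varphi}$ yields
\[
\frac{h}{\sqrt\varepsilon}\|u\|_{H^1_{\text{scl}}(N)}\le C\bigl\|e^{\tilde\varphi/h}(h^2L_{g,A,q})e^{-\tilde\varphi/h}u\bigr\|_{H^{-1}_{\text{scl}}(N)}+Ch\|u\|_{H^1_{\text{scl}}(N)}.
\]
Fixing $\varepsilon>0$ small enough (depending only on $\|A\|_{L^\infty}$ and $\|q\|_{L^\infty}$) so that $C\sqrt\varepsilon\le 1/2$, I absorb the last term into the left-hand side.

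Finally, to replace $\tilde\varphi$ by $\varphi$, I use that $e^{\tilde\varphi/h}=e^{\varphi/h}\cdot e^{\varphi^2/(2\varepsilon)}$ and that, for the now-fixed $\varepsilon$, multiplication by $e^{\pm\varphi^2/(2\varepsilon)}\in C^\infty(\overline U)$ is bounded on $H^{\pm 1}_{\text{scl}}(N)$ uniformly in $h$. Substituting $u=e^{\varphi^2/(2\varepsilon)}v$ with $v\in C_0^\infty(M^0)$ converts the estimate above into the desired inequality
\[
h\|v\|_{H^1_{\text{scl}}(N)}\le C\bigl\|e^{\varphi/h}(h^2L_{g,A,q})e^{-\varphi/h}v\bigr\|_{H^{-1}_{\text{scl}}(N)}.
\]
The only genuinely delicate point is the mapping estimate for $R_{\tilde\varphi}$: because $A\in L^\infty$ only, I cannot view $d^*(Au)$ as a classical first-order operator, and must interpret it through the pairing \eqref{eq_int_2_dis} together with the boundedness of $hd^*\colon L^2\to H^{-1}_{\text{scl}}$. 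Everything else is a bookkeeping application of Proposition~\ref{prop_Carleman_est_gain_2} and absorption.
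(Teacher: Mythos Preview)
Your proposal is correct and follows essentially the same approach as the paper: conjugate by the convexified weight $\tilde\varphi$, bound the magnetic and electric remainder terms as $\mathcal{O}(h)$ maps $H^1_{\text{scl}}\to H^{-1}_{\text{scl}}$ (the paper does this via explicit duality pairings \eqref{eq_Car_17}--\eqref{eq_Car_19} rather than invoking boundedness of $hd^*$, but the content is identical), apply Proposition~\ref{prop_Carleman_est_gain_2} with $s=-1$, fix $\varepsilon$ to absorb, and then remove the convexification. Your write-up is actually slightly more explicit than the paper's on the final step of passing from $\tilde\varphi$ back to $\varphi$, which the paper leaves as ``implies \eqref{eq_Car_for_magnetic}.''
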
 
\begin{proof}

To prove the estimate \eqref{eq_Car_for_magnetic} we shall use the following characterization of the semiclassical norm in the Sobolev space $H^{-1}(N)$, 
\begin{equation}
\label{eq_Car_16}
\|v\|_{H^{-1}_{\text{scl}}(N)}=\sup_{0\ne \psi\in C^\infty(N)}\frac{|\langle v,\psi\rangle_N|}{\| \psi\|_{H^1_{\text{scl}}(N)}}. 
\end{equation}
Let $\tilde \varphi=\varphi+\frac{h}{2\varepsilon}\varphi^2$ with $\varepsilon>0$ be such that $0<h\ll \varepsilon\ll 1$, and let $u\in C^\infty_0(M^0)$. Then for all $0\ne \psi\in C^\infty(N)$, using \eqref{eq_int_2_dis}, we get 
\begin{equation}
\label{eq_Car_17}
\begin{aligned}
&|\langle e^{\frac{\tilde \varphi}{h}} h^2  d^*(A e^{-\frac{\tilde\varphi}{h}}u) , \psi\rangle_N|=\bigg| h^2\int_N \langle Ae^{-\frac{\tilde\varphi}{h}}u, d(e^{\frac{\tilde\varphi}{h}} \psi)\rangle_g dV \bigg|\\
&\le h^2 \int_N | \langle A, d\psi \rangle_g  u  | dV
+ h \int_N |\langle A, d\tilde \varphi\rangle_g u\psi | dV
\le \mathcal{O}(h)\|u\|_{H^{1}_{\text{scl}}(N)}\|\psi\|_{H^{1}_{\text{scl}}(N)}.
\end{aligned}
\end{equation}

We also have
\begin{equation}
\label{eq_Car_18}
\begin{aligned}
|\langle e^{\frac{\tilde \varphi}{h}} h^2\langle  A, d(e^{-\frac{\tilde \varphi}{h}} u) \rangle_g, \psi\rangle_N|&
\le h^2\int_N | \langle A,d u\rangle_g \psi| dV +h\int_N |\langle A,d \tilde \varphi \rangle_g u \psi |dV 
\\
& \le \mathcal{O}(h)\|u\|_{H^{1}_{\text{scl}}(N)}\|\psi\|_{H^{1}_{\text{scl}}(N)},
\end{aligned}
\end{equation}
and 
\begin{equation}
\label{eq_Car_19}
|\langle e^{\frac{\tilde \varphi}{h}} h^2(\langle A, A\rangle_g^2+q) e^{-\frac{\tilde \varphi}{h}}u  , \psi\rangle_N|\le \mathcal{O}(h^2)\|u\|_{H^{1}_{\text{scl}}(N)}\|\psi\|_{H^{1}_{\text{scl}}(N)},
\end{equation}
Using \eqref{eq_Car_16}, it follows from \eqref{eq_Car_17}, \eqref{eq_Car_18}, and \eqref{eq_Car_19}, that 
\begin{equation}
\label{eq_Car_19_new}
\begin{aligned}
\| e^{\frac{\tilde \varphi}{h}} h^2 i d^*(A e^{-\frac{\tilde\varphi}{h}}u) -i e^{\frac{\tilde \varphi}{h}} h^2 (A, d(e^{-\frac{\tilde \varphi}{h}} u )) + e^{\frac{\tilde \varphi}{h}} h^2(\langle A, A\rangle_g^2 +q) e^{-\frac{\tilde \varphi}{h}}u \|_{H^{-1}_{\text{scl}}(N)}\\
\le \mathcal{O}(h)\|u\|_{H^{1}_{\text{scl}}(N)}. 
\end{aligned}
\end{equation}
Choosing $\varepsilon>0$ sufficiently small but fixed, i.e. independent of $h$, we obtain from \eqref{eq_Car_for_laplace} with $s=-1$ and \eqref{eq_Car_19_new} that for all $h>0$ small enough and $u\in C_0^\infty(M^0)$, 
\[
\|e^{\frac{\tilde \varphi }{h}}(h^2L_{g,A,q})e^{-\frac{\tilde \varphi}{h}}  u\|_{H^{-1}_{\text{scl}}(N)}\ge \frac{h}{C}\|u\|_{H^{1}_{\text{scl}}(N)},
\]
which implies \eqref{eq_Car_for_magnetic}.
\end{proof}

To construct complex geometric optics solutions for the magnetic Schr\"odinger operator $L_{g,A,q}$ we need to convert the Carleman estimate \eqref{eq_Car_for_magnetic} for the adjoint $L_{g,\overline{A},\overline{q}}$ into the following solvability result. Here we also use the fact that if $\varphi$ is a limiting Carleman weight then so is $-\varphi$ and we make use of the following semiclassical Sobolev norms on $M^0$, 
\[
\|u\|^2_{H^1_{\text{scl}}(M^0)}=\|u\|^2_{L^2(M^0)}+\|h\nabla_g u\|^2_{L^2(M^0)},
\]
and
\[
\|u\|_{H^{-1}_{\text{scl}}(M^0)}=\sup_{0\ne \psi\in C^\infty_0(M^0)}\frac{|\langle u, \psi\rangle_{M^0} |}{\|\psi\|_{H^1_{\text{scl}}(M^0)}}.
\]

\begin{prop}
\label{prop_solvability}
Let  $A\in L^\infty(M, T^*M)$, $q\in L^\infty(M,\C)$, and let  $\varphi$ be a limiting Carleman weight for $-h^2\Delta$ on $(U,g)$. If  $h>0$ is small enough, then for any $v\in H^{-1}(M^0)$, there is a solution $u\in H^1(M^0)$ of the equation 
\[
e^{\frac{\varphi}{h}}(h^2L_{g,A,q})e^{-\frac{\varphi}{h}}  u=v \quad \text{in}\quad M^0,
\]
which satisfies 
\[
\|u\|_{H^1_{\emph{\text{scl}}}(M^0)}\le \frac{C}{h} \|v\|_{H^{-1}_{\emph{\text{scl}}}(M^0)}.
\]
\end{prop}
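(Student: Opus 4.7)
The strategy is the standard Hahn--Banach/duality conversion of a Carleman estimate for the formal adjoint into a solvability result.

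First I would verify two preliminary facts: (i) $-\varphi$ is also a limiting Carleman weight for $-h^2\Delta$ on $(U,g)$, since $p_{-\varphi}$ differs from $p_\varphi$ only by the sign of its imaginary part so the defining Poisson bracket condition is preserved; (ii) the formal $L^2$ adjoint of $L_{g,A,q}$ is $L_{g,\overline{A},\overline{q}}$, which follows from the factorization $L_{g,A,q}=d_{\overline A}^*d_A+q$ combined with the identity $(d_B)^*=d^*-i\langle\overline B,\cdot\rangle_g$. Setting
\[
T := e^{\varphi/h}(h^2 L_{g,A,q})e^{-\varphi/h}, \qquad T^* := e^{-\varphi/h}(h^2 L_{g,\overline{A},\overline{q}})e^{\varphi/h},
\]
an application of Proposition~\ref{prop_Carleman_magnetic_mnfld} with $(L_{g,\overline A,\overline q},-\varphi)$ in place of $(L_{g,A,q},\varphi)$ yields, for all sufficiently small $h>0$,
\[
h\|w\|_{H^1_{\text{scl}}(N)} \le C\|T^* w\|_{H^{-1}_{\text{scl}}(N)}, \qquad w \in C_0^\infty(M^0). \qquad (*)
\]

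In particular $T^*$ is injective on $C_0^\infty(M^0)$, so the linear functional $\ell\colon V:=T^*(C_0^\infty(M^0))\to\mathbb{C}$ defined by $\ell(T^*w)=\langle v,w\rangle_{M^0}$ is well posed, where $\langle\cdot,\cdot\rangle_{M^0}$ denotes the distributional pairing between $H^{-1}(M^0)$ and $H^1_0(M^0)$. Because extension by zero preserves the $H^1_{\text{scl}}$ norm, combining the basic duality estimate with $(*)$ gives
\[
|\ell(T^*w)| \le \|v\|_{H^{-1}_{\text{scl}}(M^0)}\|w\|_{H^1_{\text{scl}}(N)} \le \frac{C}{h}\|v\|_{H^{-1}_{\text{scl}}(M^0)}\|T^*w\|_{H^{-1}_{\text{scl}}(N)}.
\]
By the Hahn--Banach theorem, $\ell$ extends to a continuous functional on $H^{-1}_{\text{scl}}(N)$ of the same norm, and via the identification $(H^{-1}_{\text{scl}}(N))^*\simeq H^1_{\text{scl}}(N)$ it is represented by some $u\in H^1_{\text{scl}}(N)$ with $\|u\|_{H^1_{\text{scl}}(N)}\le (C/h)\|v\|_{H^{-1}_{\text{scl}}(M^0)}$.

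Finally I would unpack the representation: for every $w\in C_0^\infty(M^0)$,
\[
\langle u, T^*w\rangle_N = \ell(T^*w) = \langle v,w\rangle_{M^0}.
\]
Since $T^*$ is a differential operator, $T^*w$ is supported in $\operatorname{supp}(w)\subset M^0$, so the left hand side reduces to $\langle u|_{M^0},T^*w\rangle_{M^0}=\langle T(u|_{M^0}),w\rangle_{M^0}$ by the formal adjoint relation, giving $T(u|_{M^0})=v$ in $\mathcal{D}'(M^0)$. Restriction delivers the required solution with the claimed norm bound. The only delicate point in the proof is the bookkeeping of sesquilinear versus bilinear conventions when identifying $T^*$ with the $L^2$ adjoint and matching the $H^{-1}$ norms on $N$ and $M^0$; once the conventions are fixed, the argument is entirely routine.
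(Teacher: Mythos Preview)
Your proposal is correct and follows exactly the standard Hahn--Banach duality argument that the paper has in mind; indeed, the paper does not give its own proof but simply refers to \cite{DKSaloU_2009} and \cite{Krup_Uhlmann_magnet}, where precisely this conversion of the Carleman estimate for the adjoint (using that $-\varphi$ is again a limiting Carleman weight, as the paper notes just before the statement) into a solvability result is carried out.
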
 
The proof of this result is well-known and we refer to \cite{DKSaloU_2009} and \cite{Krup_Uhlmann_magnet} for the details.

\section{Complex geometric optics solutions in admissible geometries}

\label{sec_CGO_admissible}

Let $(M,g)$ be an admissible manifold. Then we know that $(M,g)$ is isometrically  embedded in $(\R\times M_0, c(e\oplus g_0))$ for some compact simple $(n-1)$--dimensional manifold $(M_0,g_0)$ and some $0<c\in C^\infty(\R\times M_0)$. Assume, 
after replacing $M_0$ by a slightly larger simple manifold if needed, that for some simple manifold $(D,g_0)\subset\subset (M_0^0,g_0)$ one has  
\begin{equation}
\label{eq_cgo_mnfl_0_1}
(M,g)\subset\subset (\R\times D^0, c(e\oplus g_0))\subset\subset (\R\times M_0^0, c(e\oplus g_0)). 
\end{equation}

Let $A\in L^\infty(M,  T^*M)$ and $q\in L^\infty(M,\C)$. It will be convenient to extend $A$ and $q$ to all of $\R\times M_0^0$ by taking the zero extension. We shall denote the extensions of $A$ and $q$ by the same letters.  Then $A\in L^\infty(\R\times M_0^0, T^*(\R\times M_0^0))$ is compactly supported, and using a partition of unity argument together with a regularization in each coordinate patch, we get the following result. 

\begin{prop}
\label{prop_approximation}
There exists a family  $A_\tau\in C^\infty_0(\R\times M_0^0, T^*(\R\times M_0^0))$, $\tau>0$,  such that 
\begin{equation}
\label{eq_Car_20_0}
\|A-A_\tau\|_{L^2}=o(1),\quad \tau\to 0,
\end{equation}
and 
\begin{equation}
\label{eq_Car_20}
\begin{aligned}
\|A_\tau\|_{L^\infty}=\mathcal{O}(1), \quad
\|\nabla A_\tau\|_{L^\infty}=\mathcal{O}(\tau^{-1}),\quad
\|\Delta A_\tau\|_{L^\infty}=\mathcal{O}(\tau^{-2}),\quad \tau\to 0.  
\end{aligned}
\end{equation}
\end{prop}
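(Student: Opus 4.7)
The argument is a standard manifold mollification. The plan is to cut $A$ up by a partition of unity subordinate to a finite atlas covering $\operatorname{supp}(A)$, express each piece as a compactly supported tuple of $L^\infty$ functions on $\R^n$, convolve with a scaled mollifier, and then reassemble. The compactness of $\operatorname{supp}(A)$ in $\R\times M_0^0$ is what makes this manageable, since only finitely many charts are needed and no issues arise near $\p M_0$.

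Concretely, fix a finite open cover $\{U_j\}_{j=1}^N$ of $\operatorname{supp}(A)$ by coordinate charts contained in $\R\times M_0^0$, with diffeomorphisms $\kappa_j:U_j\to V_j\subset \R^n$, and choose $\chi_j\in C_0^\infty(U_j)$ forming a partition of unity on a neighborhood of $\operatorname{supp}(A)$. In coordinates on $V_j$ the form $\chi_j A$ becomes a compactly supported $L^\infty$ vector $(a_{j,1},\dots,a_{j,n})$ on $\R^n$. Pick $\phi\in C_0^\infty(\R^n)$ with $\phi\ge 0$ and $\int\phi\,dx=1$, and set $\phi_\tau(x)=\tau^{-n}\phi(x/\tau)$. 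Define
\[
A_\tau=\sum_{j=1}^N \kappa_j^*\bigl((a_{j,k}*\phi_\tau)\,dx^k\bigr),
\]
which for $\tau$ smaller than the distance from $\operatorname{supp}(\chi_j A)$ to $\p V_j$ is a well-defined element of $C_0^\infty(\R\times M_0^0,T^*(\R\times M_0^0))$.

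The three $L^\infty$ bounds are immediate from Young's inequality: with $\|\phi_\tau\|_{L^1}=1$, $\|\nabla\phi_\tau\|_{L^1}=\tau^{-1}\|\nabla\phi\|_{L^1}$, and $\|\Delta\phi_\tau\|_{L^1}=\tau^{-2}\|\Delta\phi\|_{L^1}$, each coordinate derivative of $a_{j,k}*\phi_\tau=a_{j,k}*(\p^\alpha\phi_\tau)$ is controlled in $L^\infty$ by the corresponding power of $\tau^{-1}$ times $\|a_{j,k}\|_{L^\infty}$. Pulling back by the smooth diffeomorphisms $\kappa_j$ on a fixed compact set only introduces constants depending on $g$ and the atlas, so \eqref{eq_Car_20} follows. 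The $L^2$ convergence \eqref{eq_Car_20_0} reduces to the standard fact that for each fixed $L^2(\R^n)$ function $a_{j,k}$ (note $a_{j,k}\in L^\infty\cap L^1\subset L^2$ by compact support) one has $\|a_{j,k}-a_{j,k}*\phi_\tau\|_{L^2(\R^n)}\to 0$ as $\tau\to 0$, combined with $A=\sum_j \chi_j A$ on $\operatorname{supp}(A)$.

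There is no real obstacle here; the only thing to be mildly careful about is that differentiating the pullback $\kappa_j^*$ produces extra factors involving derivatives of the smooth transition maps, which are bounded on the relatively compact charts and therefore harmless. All constants in \eqref{eq_Car_20} are uniform in $\tau$ because the atlas and partition of unity are fixed once and for all.
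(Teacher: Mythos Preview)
Your argument is correct and is precisely the approach the paper indicates: it states the proposition without a detailed proof, noting only that it follows from ``a partition of unity argument together with a regularization in each coordinate patch.'' Your write-up simply fills in those details in the expected way.
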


We have global coordinates $ x=(x_1,x')$ on $\R\times M_0$ in which the metric $g$ has the form
\begin{equation}
\label{eq_cgo_metric_0}
g(x)=c(x)\begin{pmatrix}
1& 0\\
0 & g_0(x')
\end{pmatrix},
\end{equation} 
where $c>0$ and $g_0$ is a simple metric on $M_0$. Then the globally defined function 
\begin{equation}
\label{eq_cgo_metric_0_1}
\varphi(x)=x_1
\end{equation}
 on $M$
is a limiting Carleman weight. 

We shall construct solutions to 
\begin{equation}
\label{eq_cgo_1}
L_{g,A,q}u=0\quad\text{in}\quad M^0
\end{equation}
 of the form
\begin{equation}
\label{eq_cgo_2}
u=e^{-\frac{\rho}{h}}(a+r_0),
\end{equation}
where $\rho=\varphi+i\psi$ is a complex phase, $\psi\in C^\infty(M,\R)$, $a\in C^\infty(M,\C)$ is an amplitude, obtained by a WKB construction and $r_0$ is a remainder term.  We get 
\[
e^{\frac{\rho}{h}} \circ (-h^2\Delta) \circ e^{-\frac{\rho}{h}}=-h^2\Delta +h\Delta \rho+2h \nabla\rho -|\nabla \rho|^2,
\]
where $\nabla\rho$ is a complex vector field, and $|\nabla \rho|^2=\langle \nabla \rho,\nabla \rho\rangle $ is computed using the bilinear extension of the Riemannian scalar product to the complexified tangent bundle.  
We also have
\[
e^{\frac{\rho}{h}} ih^2d^*(A  e^{-\frac{\rho}{h}}u)= i h^2 d^*(Au)+ih\langle A,d\rho\rangle_g u,
\]
and
\[
- i h^2 e^{\frac{\rho}{h}}  \langle A, d(e^{-\frac{\rho}{h}} u)\rangle_g= -i h^2 \langle A, du\rangle_g +i h\langle A,d\rho\rangle_g u, 
\]
and therefore, 
\begin{align*}
e^{\frac{\rho}{h}} h^2L_{g,A,q} &(e^{-\frac{\rho}{h}}u)=-h^2\Delta u  +h(\Delta \rho) u+2h \nabla\rho(u) -|\nabla \rho|^2 u \\
&-ih^2 \langle A,du\rangle_g+2ih \langle A,d\rho \rangle_g u+  i h^2 d^*(Au) +h^2( \langle A, A \rangle_g+q)u.
\end{align*}

In order that \eqref{eq_cgo_2} be a solution to \eqref{eq_cgo_1}, following the WKB method, we require that the complex phase $\rho$ satisfies the eikonal equation,  
\begin{equation}
\label{eq_cgo_3}
|\nabla \rho|^2=0,
\end{equation}
and the amplitude $a$ satisfies  the regularized transport equation,
\begin{equation}
\label{eq_cgo_4}
(2i \langle A_\tau ,d\rho\rangle_g  +2 \nabla\rho) a+  (\Delta \rho) a=0. 
\end{equation}
The remainder term $r_0$ will be then determined by solving the equation, 
\begin{equation}
\label{eq_cgo_5}
\begin{aligned}
e^{\frac{\rho}{h}} h^2L_{g,A,q} (e^{-\frac{\rho}{h}}r_0)=-( &-h^2\Delta a -ih^2 \langle A,da \rangle_g+2ih \langle A-A_\tau,d\rho\rangle_g a+  i h^2 d^*(Aa)\\
& +h^2(\langle A, A \rangle_g+q)a). 
\end{aligned}
\end{equation}

As $\varphi$ is given in \eqref{eq_cgo_metric_0_1}, the eikonal equation \eqref{eq_cgo_3} becomes a pair of equations for $\psi$,
\begin{equation}
\label{eq_cgo_6}
|\nabla \psi|^2=|\nabla \varphi|^2, \quad \langle \nabla \varphi, \nabla \psi\rangle =0. 
\end{equation}
Using \eqref{eq_cgo_metric_0} and \eqref{eq_cgo_metric_0_1}, we get 
\begin{equation}
\label{eq_cgo_7}
\nabla \varphi =\frac{1}{c}\p_{x_1}, \quad |\nabla \varphi|^2=\frac{1}{c}. 
\end{equation}
It follows from \eqref{eq_cgo_6} and \eqref{eq_cgo_7} that 
\begin{equation}
\label{eq_cgo_8}
|\nabla \psi|^2=\frac{1}{c}, \quad \p_{x_1}\psi=0. 
\end{equation}

Let $\omega\in D$ be a point such that $(x_1,\omega)\notin M$ for all $x_1\in \R$.  We have the global coordinates on $M$ given by $x=(x_1,r,\theta)$, where $(r,\theta)$ are the polar normal coordinates in $(D,g_0)$ with center $\omega$, i.e. $x'=\exp_\omega^D(r\theta)$ where $r>0$ and $\theta\in \mathbb{S}^{n-2}$. Here $\exp_\omega^D$ is the exponential map which takes its maximal domain in $T_\omega D$ diffeomorphically onto $D$, since $D$ is simple.   In these coordinates  the metric $g$ has the form,
\begin{equation}
\label{eq_cgo_8_metrix}
g(x_1,r,\theta)=c(x_1,r,\theta) \begin{pmatrix}
1& 0& 0\\
0& 1& 0\\
0& 0& m(r,\theta)
\end{pmatrix},
\end{equation}
where $m$ is a smooth positive definite matrix. Hence, the eikonal equation \eqref{eq_cgo_8} has a global solution 
\[
\psi(x)=\psi_\omega(x)=r. 
\]
Hence, 
\[
\rho=x_1+i r,
\]
 and therefore, the vector field 
 \[
 \nabla \rho=\frac{2}{c}\overline{\p},
 \]
where 
\[
\overline{\p}=\frac{1}{2}(\p_{x_1}+i\p_{r}). 
\]
We also get 
\[
\Delta \rho=|g|^{-1/2}\p_{x_1} \bigg(\frac{|g|^{1/2}}{c}\bigg)+i |g|^{-1/2}\p_{r} \bigg(\frac{|g|^{1/2}}{c}\bigg)=\frac{1}{c}\overline{\p} \log\bigg(\frac{|g|}{c^2}\bigg).
\]
Thus, the transport equation \eqref{eq_cgo_4} has the form,
\begin{equation}
\label{eq_cgo_9}
4\overline{\p }a + \bigg(\overline{\p} \log\bigg(\frac{|g|}{c^2}\bigg)\bigg) a+2i ((A_\tau)_1 +i (A_\tau)_r) a=0. 
\end{equation}
Following \cite{DKSaloU_2009}, we choose a solution of \eqref{eq_cgo_9} in the form, 
\[
a=|g|^{-1/4} c^{1/2} e^{i\Phi_\tau} a_0(x_1,r)b(\theta),
\]
where $\Phi_\tau$ is a solution of  
\begin{equation}
\label{eq_cgo_10}
\overline{\p}\Phi_\tau =-\frac{1}{2}((A_\tau)_1 +i (A_\tau)_r),
\end{equation}
$a_0$ is  a non-vanishing holomorphic function, 
\[
\overline{\p}a_0=0,
\]
and $b(\theta)$ is smooth.  The equation \eqref{eq_cgo_10} is given in the global coordinates $(x_1,r)$, and using the standard fundamental solution  $1/ (\pi (x_1+i r))\in L^1_{\text{loc}}(\R^2)$ of  the $\overline{\p}$ operator, we can take
\begin{equation}
\label{eq_cgo_11}
\Phi_\tau(x_1,r,\theta) = -\frac{1}{2} \frac{1}{\pi (x_1+i r)} * ((A_\tau)_1 +i (A_\tau)_r),
\end{equation}
with $*$ denoting the convolution in the variables $(x_1,r)$ and $A_\tau(\cdot,\cdot,\theta)$ being viewed as a compactly supported smooth $1$-form in the complex $x_1+i r$ plane.

In  \eqref{eq_cgo_11} we are interested the solution $\Phi_\tau$ when $x_1$, $r$ vary in a bounded region and therefore, using \eqref{eq_Car_20}  we see that  
\begin{equation}
\label{eq_cgo_12}
\begin{aligned}
\|\Phi_\tau\|_{L^\infty(M)}=\mathcal{O}(1), \quad
\|\nabla \Phi_\tau\|_{L^\infty(M)}=\mathcal{O}(\tau^{-1}),\\ 
\|\Delta \Phi_\tau\|_{L^\infty(M)}=\mathcal{O}(\tau^{-2}),\quad \tau\to 0.  
\end{aligned}
\end{equation}
Setting 
\[
\Phi(x_1,r,\theta) = -\frac{1}{2} \frac{1}{\pi (x_1+i r)} * (A_1 +i A_r)\in L^\infty(M),
\]
using Young's inequality, \eqref{eq_cgo_11} and \eqref{eq_Car_20_0}, we obtain that 
\begin{equation}
\label{eq_cgo_13}
\|\Phi-\Phi_\tau\|_{L^2(M)}=o(1), \quad \tau\to 0. 
\end{equation}

Finally, we shall solve the equation \eqref{eq_cgo_5} for the remainder term $r_0$. First notice that the right hand side of  \eqref{eq_cgo_5},
\begin{equation}
\label{eq_cgo_14_v}
\begin{aligned}
v=-( &-h^2\Delta a -ih^2 \langle A,da \rangle_g+2ih \langle A-A_\tau,d\rho\rangle_g a+  i h^2 d^*(Aa)\\
& +h^2(\langle A, A \rangle_g+q)a)\in H^{-1}(M^0), 
\end{aligned}
\end{equation}
and we shall estimate $\|v\|_{H^{-1}_{\text{scl}}(M^0)}$. To that end, let $0\ne \phi\in C^\infty_0(M^0)$. Then using \eqref{eq_cgo_12} we get 
\begin{equation}
\label{eq_cgo_14}
\begin{aligned}
&|\langle h^2\Delta a, \phi\rangle_{M^0}|\le \mathcal{O}(h^2/\tau^2)\|\phi\|_{L^2(M^0)}\le \mathcal{O}(h^2/\tau^2)\|\phi\|_{H^1_{\text{scl}}(M^0)},\\
&|\langle h^2  \langle A,da\rangle_g, \phi\rangle_{M^0}|\le  \mathcal{O}(h^2/\tau)\|\phi\|_{L^2(M^0)}\le  \mathcal{O}(h^2/\tau)\|\phi\|_{H^1_{\text{scl}}(M^0)}. 
\end{aligned}
\end{equation}
Using \eqref{eq_cgo_12}, \eqref{eq_Car_20_0}, and the Cauchy--Schwarz inequality,  we obtain that 
\begin{equation}
\label{eq_cgo_15}
\begin{aligned}
|\langle h  \langle  A-A_\tau,d\rho\rangle_g  a, \phi\rangle_{M^0}|
 &\le \mathcal{O}(h)\|a\|_{L^\infty(M)}\|A-A_\tau\|_{L^2(M^0)}\|\phi\|_{L^2(M^0)}\\
 &\le  \mathcal{O}(h)o_{\tau\to 0}(1) \|\phi\|_{H^1_{\text{scl}}(M^0)}. 
\end{aligned}
\end{equation}
By \eqref{eq_int_2_dis}, \eqref{eq_cgo_12}, \eqref{eq_Car_20_0}, we get 
\begin{equation}
\label{eq_cgo_16}
\begin{aligned}
|\langle  h^2  d^*(Aa), \phi\rangle_{M^0}|&=\bigg| h^2 \int \langle Aa, d\phi \rangle_g dV \bigg|\\
& \le h^2\int | \langle (A-A_\tau) a, d\phi \rangle_g| dV+h^2 \bigg| \int d^*(A_\tau a)\phi dV\bigg|\\
&\le (\mathcal{O}(h)o_{\tau\to 0}(1) + \mathcal{O}(h^2/\tau)) \|\phi\|_{H^1_{\text{scl}}(M^0)}.
\end{aligned}
\end{equation}
Finally, we have 
\begin{equation}
\label{eq_cgo_17}
\|h^2(\langle  A, A\rangle_g ^2+q)a\|_{L^2(M)}\le \mathcal{O}(h^2). 
\end{equation}
It follows from  \eqref{eq_cgo_14_v}, \eqref{eq_cgo_14},  \eqref{eq_cgo_15},  \eqref{eq_cgo_16} and  \eqref{eq_cgo_17} that 
\[
\|v\|_{H^{-1}_{\text{scl}}(M^0)}\le \mathcal{O}(h^2/\tau^2)+ \mathcal{O}(h)o_{\tau\to 0}(1), 
\]
and choosing $\tau=h^\sigma$ with $\sigma$, $0<\sigma<1/2$, we get 
\begin{equation}
\label{eq_cgo_18}
\|v\|_{H^{-1}_{\text{scl}}(M^0)}=o(h), \quad h\to 0. 
\end{equation}

Thus, by Proposition \ref{prop_solvability} and \eqref{eq_cgo_18}, for all $h>0$ small enough, there exists a solution $r_0\in H^1(M^0)$ of    \eqref{eq_cgo_5} which satisfies $\|r_0\|_{H^{1}_{\text{scl}}(M^0)}=o(1)$ as $h\to 0$. 

The discussion in this section can be summarized in the following proposition.
\begin{prop}
\label{prop_cgo-admiss}
Assume that $(M,g)$ satisfies \eqref{eq_cgo_mnfl_0_1} and \eqref{eq_cgo_metric_0}, and let $A\in L^\infty(M, T^*M)$, $q\in L^\infty(M,\C)$.  Let $\omega\in D$ be such that $(x_1,\omega)\notin M$ for all $x_1$, and let $(r,\theta)$ be the polar normal coordinates in $(D,g_0)$ with center $\omega$. Then for all $h>0$ small enough, there exists a solution $u\in H^1(M^0)$ to the magnetic Schr\"odinger equation 
\[
L_{g,A,q} u=0\quad \text{in}\quad \mathcal{D}'(M^0)
\]
of the form
\[
u=e^{-\frac{1}{h}(x_1+ir )}(|g|^{-1/4} c^{1/2} e^{i\Phi_h} a_0(x_1,r)b(\theta) +r_0 ),
\]
where $a_0$ is  a non-vanishing holomorphic function, $(\p_{x_1}+i\p_r)a_0=0$, and $b(\theta)$ is smooth. The function
$\Phi_h\in C^\infty(M)$ satisfies  
\begin{equation}
\label{eq_cgo_19}
\begin{aligned}
\|\Phi_h\|_{L^\infty(M)}=\mathcal{O}(1), \quad
\|\nabla \Phi_h\|_{L^\infty(M)}=\mathcal{O}(h^{-\sigma}),\\ 
\|\Delta \Phi_h\|_{L^\infty(M)}=\mathcal{O}(h^{-2\sigma}),\quad h\to 0, \quad 0<\sigma<1/2,
\end{aligned}
\end{equation}
and    
\begin{equation}
\label{eq_cgo_20}
\|\Phi-\Phi_h\|_{L^2(M)}=o(1), \quad h\to 0, 
\end{equation}
where 
\[
\Phi(x_1,r,\theta) = -\frac{1}{2} \frac{1}{\pi (x_1+i r)} * (A_1 +i A_r),
\]
where $A=A_1 dx_1+A_rdr+A_\theta d\theta$.  The remainder $r_0$ is such that $\|r_0\|_{H^{1}_{\emph{\text{scl}}}(M^0)}=o(1)$ as $h\to 0$. 
\end{prop}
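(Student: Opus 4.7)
The plan is to assemble the WKB-type construction already outlined in Section~\ref{sec_CGO_admissible} into a single CGO solution, and then to solve for the remainder by applying the solvability result Proposition~\ref{prop_solvability}. First I would extend $A$ and $q$ by zero to $\R\times M_0^0$ and invoke Proposition~\ref{prop_approximation} to obtain the smoothed family $A_\tau$ with the quantitative bounds \eqref{eq_Car_20_0}--\eqref{eq_Car_20}. With $\varphi(x)=x_1$ as the limiting Carleman weight, I seek $u=e^{-\rho/h}(a+r_0)$, $\rho=x_1+i\psi$, imposing the eikonal equation $|\nabla\rho|^2=0$ and the regularized transport equation \eqref{eq_cgo_4}, which leaves for $r_0$ the inhomogeneous conjugated equation \eqref{eq_cgo_5} with source $v$ given by \eqref{eq_cgo_14_v}.

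To solve the eikonal, the conformal form \eqref{eq_cgo_metric_0} reduces it to $|\nabla\psi|_g^2=1/c$ and $\p_{x_1}\psi=0$. Working in the global coordinates $(x_1,r,\theta)$ in which the metric takes the block-diagonal form \eqref{eq_cgo_8_metrix}, the choice $\psi=r$ is a global smooth solution; the associated complex vector field is $\nabla\rho=(2/c)\overline{\p}$ with $\overline{\p}=\tfrac12(\p_{x_1}+i\p_r)$. The transport equation \eqref{eq_cgo_9} is then solved explicitly by $a=|g|^{-1/4}c^{1/2}e^{i\Phi_\tau}a_0(x_1,r)b(\theta)$, where $a_0$ is holomorphic and $\Phi_\tau$ is given by convolving the fundamental solution $1/(\pi(x_1+ir))$ with $-\tfrac12((A_\tau)_1+i(A_\tau)_r)$, as in \eqref{eq_cgo_11}. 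The bounds \eqref{eq_cgo_12} for $\Phi_h:=\Phi_\tau$ follow from \eqref{eq_Car_20}, and the $L^2$ convergence \eqref{eq_cgo_13} follows from \eqref{eq_Car_20_0} by Young's inequality applied to the Cauchy kernel on compact support.

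The main obstacle, and the step that demands care, is establishing $\|v\|_{H^{-1}_{\text{scl}}(M^0)}=o(h)$ with $A$ only $L^\infty$. Testing $v$ against $\phi\in C_0^\infty(M^0)$, the $h^2\Delta a$ and $h^2\langle A,da\rangle_g$ contributions yield $\mathcal{O}(h^2/\tau^2)$ and $\mathcal{O}(h^2/\tau)$ by \eqref{eq_cgo_12}, while the term $h\langle A-A_\tau,d\rho\rangle_g a$ is controlled in $L^2$ by Cauchy--Schwarz combined with \eqref{eq_Car_20_0}, giving $\mathcal{O}(h)\,o_{\tau\to 0}(1)$. The subtle term is $h^2 d^*(Aa)$: since $A$ is not differentiable, I would pass the derivative to $\phi$ using the duality \eqref{eq_int_2_dis}, split $A=(A-A_\tau)+A_\tau$, and integrate by parts only on the smooth part, producing a bound $\mathcal{O}(h)\,o_{\tau\to 0}(1)+\mathcal{O}(h^2/\tau)$. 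Balancing with $\tau=h^\sigma$, $0<\sigma<1/2$, delivers \eqref{eq_cgo_18}. Finally, setting $\tilde r_0=e^{-i\psi/h}r_0$ reduces the complex-phase conjugated equation to one with the real weight $\varphi$, and since multiplication by $e^{\pm i\psi/h}$ is bounded on $H^{\pm 1}_{\text{scl}}$ uniformly in $h$, an application of Proposition~\ref{prop_solvability} provides $r_0\in H^1(M^0)$ with $\|r_0\|_{H^1_{\text{scl}}(M^0)}=\mathcal{O}(h^{-1})\|v\|_{H^{-1}_{\text{scl}}(M^0)}=o(1)$, completing the construction.
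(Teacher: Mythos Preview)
Your proposal is correct and follows essentially the same construction as the paper: the eikonal/transport scheme in polar normal coordinates, the regularized amplitude via $\Phi_\tau$, the term-by-term $H^{-1}_{\text{scl}}$ estimate of the source $v$ with the $A=(A-A_\tau)+A_\tau$ splitting and the choice $\tau=h^\sigma$, and the application of Proposition~\ref{prop_solvability}. Your explicit reduction from the complex phase $\rho$ to the real weight $\varphi$ via $\tilde r_0=e^{-i\psi/h}r_0$ is a point the paper leaves implicit, but otherwise the arguments coincide.
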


\section{Proof of Theorem \ref{thm_main}}

\label{sec_proof_thm_main_1}

Let us start by recalling some auxiliary, essentially well-known, results, see \cite{DKSaloU_2009}, \cite{Krup_Uhlmann_magnet}. 

\begin{lem}
\label{lem_gauge}
Let $A\in L^\infty(M,T^*M)$, $q\in L^\infty(M,\C)$, and let $\phi\in W^{1,\infty}(M^0)$. Then we have 
\begin{equation}
\label{eq_proof_gauge_1}
e^{-i\phi}\circ L_{g,A,q} \circ e^{i\phi}=L_{g, A+d\phi,q}. 
\end{equation}
If furthermore $\phi|_{\p M}=0$ then 
\begin{equation}
\label{eq_proof_gauge_2}
C_{g,A,q}=C_{g,A+d\phi,q}. 
\end{equation}
\end{lem}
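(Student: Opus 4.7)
The plan is to first establish the operator identity \eqref{eq_proof_gauge_1} by a distributional computation, and then use it to transport $H^1$ solutions of the magnetic Schr\"odinger equation between the two gauges. The crucial step for \eqref{eq_proof_gauge_2} will be the choice of boundary extension when comparing the weakly defined magnetic normal derivatives in \eqref{eq_trace_normal_int}.

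For \eqref{eq_proof_gauge_1}, I would verify the identity in $H^{-1}(M^0)$, for $u\in H^1(M^0)$, by direct computation from the explicit form \eqref{eq_int_1} of $L_{g,A,q}$. Expanding $e^{-i\phi} L_{g,A,q}(e^{i\phi} u)$ via the product rules for $d$ and $-\Delta$ together with the distributional definition \eqref{eq_int_2_dis} of $d^*(\cdot\,)$, the terms $-2i\langle d\phi, du\rangle_g$ and $\langle d\phi, d\phi\rangle_g u$ that arise from $-\Delta(e^{i\phi}u)$ combine with the cross terms $u\langle d\phi,A\rangle_g$ and $u\langle A,d\phi\rangle_g$ coming from $i d^*(A e^{i\phi}u)$ and $-i\langle A,d(e^{i\phi}u)\rangle_g$ to reproduce exactly the expansion of $L_{g,A+d\phi,q}u$. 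Since $\phi\in W^{1,\infty}(M^0)$ only, no pointwise $\Delta\phi$ is available; however, any formal $\Delta\phi$ contributions get absorbed into the distributional piece $id^*(u\,d\phi)$ of $L_{g,A+d\phi,q}u$, so the identity makes sense and holds as elements of $H^{-1}(M^0)$. Equivalently, one can start from $L_{g,A,q}=d_{\bar A}^*d_A+q$, use the pointwise identity $d_A(e^{i\phi}u)=e^{i\phi}d_{A+d\phi}u$, and handle the conjugated adjoint $e^{-i\phi}\circ d_{\bar A}^*\circ e^{i\phi}$ by duality against smooth test functions.

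For \eqref{eq_proof_gauge_2}, let $u\in H^1(M^0)$ solve $L_{g,A,q}u=0$ and set $v:=e^{-i\phi}u$. Because $e^{\pm i\phi}\in W^{1,\infty}(M^0)$, multiplication by $e^{-i\phi}$ is an isomorphism of $H^1(M^0)$, and by \eqref{eq_proof_gauge_1} the function $v$ solves $L_{g,A+d\phi,q}v=0$. Since $\phi|_{\p M}=0$, the Dirichlet traces agree: $v|_{\p M}=u|_{\p M}$ in $H^{1/2}(\p M)$. To compare the magnetic normal derivatives, fix $f\in H^{1/2}(\p M)$ and any $H^1$ extension $w$ of $f$. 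The key observation is that $\tilde w:=e^{i\phi}w$ is also an $H^1$ extension of $f$ (again because $\phi|_{\p M}=0$), and using this extension in the definition \eqref{eq_trace_normal_int} of $\langle d_{A+d\phi}v,\nu\rangle_g$ produces the pointwise identities
\begin{align*}
dv+i(A+d\phi)v&=e^{-i\phi}(du+iAu),\\
d\tilde w-i(A+d\phi)\tilde w&=e^{i\phi}(dw-iAw),\\
q\,v\,\tilde w &= q\,u\,w.
\end{align*}
Bilinearity of $\langle\cdot,\cdot\rangle_g$ together with $e^{-i\phi}e^{i\phi}=1$ then yields
\[
\langle\langle d_{A+d\phi}v,\nu\rangle_g,f\rangle = \int_M\langle du+iAu,\,dw-iAw\rangle_g\,dV + \int_M q u w\,dV = \langle\langle d_A u,\nu\rangle_g,f\rangle
\]
for every $f\in H^{1/2}(\p M)$. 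Hence $(u|_{\p M},\langle d_A u,\nu\rangle_g)\in C_{g,A+d\phi,q}$, proving $C_{g,A,q}\subseteq C_{g,A+d\phi,q}$; the reverse inclusion follows by applying the same argument with $\phi$ replaced by $-\phi$ and $A$ replaced by $A+d\phi$.

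The main obstacle is not computational but lies in making all identities rigorous at the low regularity $\phi\in W^{1,\infty}$, where $\Delta\phi$ does not exist classically, and in tracking the bilinear (not Hermitian) pairing $\langle\cdot,\cdot\rangle_g$ through complex-valued gauges. The trick is twofold: interpret $L_{g,A,q}(e^{i\phi}u)$ via the distributional formula \eqref{eq_int_2_dis}, so that $\Delta\phi$ only appears hidden inside $d^*$; and, when comparing boundary traces, choose the twisted extension $\tilde w=e^{i\phi}w$ of $f$ rather than the naive extension $w$, so that the cross terms in \eqref{eq_trace_normal_int} cancel cleanly without invoking any second derivatives of $\phi$.
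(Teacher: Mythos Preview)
Your proof is correct and follows essentially the same approach as the paper's. Both arguments verify \eqref{eq_proof_gauge_1} by direct computation and then, for \eqref{eq_proof_gauge_2}, transport a solution $u$ to $v=e^{-i\phi}u$ and compare magnetic normal derivatives by choosing the twisted extension $e^{i\phi}w$ of the boundary datum $f$ in the weak definition \eqref{eq_trace_normal_int}; your treatment is simply more detailed about the low-regularity bookkeeping.
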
 
\begin{proof}
A direct computation using \eqref{eq_int_1} shows \eqref{eq_proof_gauge_1}.  In order to see \eqref{eq_proof_gauge_2}, let $u\in H^1(M^0)$ be a solution to $L_{g,A,q}u=0$ in $\mathcal{D}'(M^0)$. Then it follows from \eqref{eq_proof_gauge_1} that  $e^{-i\phi}u\in H^1(M^0)$ satisfies $L_{g,A+d\phi,q} (e^{-i\phi}u)=0$ in $\mathcal{D}'(M^0)$. We have $e^{-i\phi}u|_{\p M}=u|_{\p M}$. Using  \eqref{eq_trace_normal_int}, for $f\in H^{1/2}(\p M)$, 
we get 
\begin{align*}
&\langle  \langle d_{A+d\phi}(e^{-i\phi}u) ,\nu \rangle_g  , f\rangle_{H^{-\frac{1}{2}}(\p M)\times H^{\frac{1}{2}}(\p M)}\\
&= \langle  \langle d_{A+d\phi}(e^{-i\phi}u) ,\nu \rangle_g  , e^{i\phi} f\rangle_{H^{-\frac{1}{2}}(\p M)\times H^{\frac{1}{2}}(\p M)}
=\langle  \langle d_{A}u ,\nu \rangle_g  , f\rangle_{H^{-\frac{1}{2}}(\p M)\times H^{\frac{1}{2}}(\p M)},
\end{align*}
which completes the proof. 
\end{proof}

The following result is proved in exactly the same way as  \cite[Proposition 3.4]{Krup_Uhlmann_magnet}. 

\begin{lem}
\label{lem_Cauchy_ext}
Let $(M,g)$ and $(\tilde M, g)$ be smooth compact Riemannian manifolds with smooth boundaries such that $M\subset \tilde M^0$.  
Let $A^{(1)}, A^{(2)}\in L^\infty(\tilde M,T^*\tilde M)$ and $q^{(1)},q^{(2)}\in L^\infty(\tilde M,\C)$. Assume that 
\[
A^{(1)}=A^{(2)}\quad \text{and}\quad q^{(1)}=q^{(2)}\quad \text{in}\quad \tilde M\setminus M.
\]
If $C_{g, A^{(1)},q^{(1)}}=C_{g, A^{(2)},q^{(2)}}$ then $C'_{g, A^{(1)},q^{(1)}}=C'_{g, A^{(2)},q^{(2)}}$, where $C'_{g, A^{(j)},q^{(j)}}$ is the set of the Cauchy data for $L_{g,A^{(j)},q^{(j)}}$ on $\tilde M$, $j=1,2$. 
\end{lem}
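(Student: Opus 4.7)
The plan is to prove the lemma by the standard extension argument: given a solution on $\tilde M$ with coefficients $A^{(1)}, q^{(1)}$, I use the hypothesis on $M$ to produce a solution on $M$ with coefficients $A^{(2)}, q^{(2)}$ having matching Cauchy data on $\p M$, and then glue it to the original solution across $\p M$ to get a solution on $\tilde M$ for the second pair of coefficients with matching Cauchy data on $\p \tilde M$.

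Concretely, let $\tilde u_1\in H^1(\tilde M^0)$ satisfy $L_{g,A^{(1)},q^{(1)}}\tilde u_1=0$ in $\mathcal{D}'(\tilde M^0)$. The restriction $u_1:=\tilde u_1|_M$ lies in $H^1(M^0)$ and solves $L_{g,A^{(1)},q^{(1)}}u_1=0$ in $\mathcal{D}'(M^0)$. By the hypothesis $C_{g,A^{(1)},q^{(1)}}=C_{g,A^{(2)},q^{(2)}}$ there exists $u_2\in H^1(M^0)$ with $L_{g,A^{(2)},q^{(2)}}u_2=0$ in $\mathcal{D}'(M^0)$, and with $u_2|_{\p M}=u_1|_{\p M}$ and $\langle d_{A^{(2)}}u_2,\nu\rangle_g|_{\p M}=\langle d_{A^{(1)}}u_1,\nu\rangle_g|_{\p M}$ in $H^{-1/2}(\p M)$. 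I then define $\tilde u_2$ on $\tilde M$ by $u_2$ on $M$ and $\tilde u_1$ on $\tilde M\setminus M^0$. Since the traces agree on $\p M$, $\tilde u_2\in H^1(\tilde M^0)$.

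The main step is checking that $L_{g,A^{(2)},q^{(2)}}\tilde u_2=0$ in $\mathcal{D}'(\tilde M^0)$. For a test function $\varphi\in C_0^\infty(\tilde M^0)$, I split the pairing
\[
\langle L_{g,A^{(2)},q^{(2)}}\tilde u_2,\overline\varphi\rangle_{\tilde M^0}
= \int_{M}\langle d_{A^{(2)}}u_2,d_{\overline{A^{(2)}}}\overline\varphi\rangle_g\,dV + \int_{\tilde M\setminus M}\langle d_{A^{(2)}}\tilde u_1,d_{\overline{A^{(2)}}}\overline\varphi\rangle_g\,dV + \int_{\tilde M} q^{(2)}\tilde u_2\overline\varphi\,dV.
\]
In $\tilde M\setminus M$ I replace $A^{(2)},q^{(2)}$ by $A^{(1)},q^{(1)}$. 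Applying the definition \eqref{eq_trace_normal_int} of the magnetic normal derivative on $M$ to $u_2$ with extension $\varphi|_M$, and applying the same identity on $\tilde M\setminus M$ to $\tilde u_1$ with extension $\varphi|_{\tilde M\setminus M}$, the two interior equation contributions vanish and one is left with the boundary pairings on $\p M$ taken from each side. The normal vector on $\p M$ viewed from $\tilde M\setminus M$ is $-\nu$; the pairing from the $M$ side contributes $-\langle\langle d_{A^{(2)}}u_2,\nu\rangle_g,\overline{\varphi|_{\p M}}\rangle$, while from the $\tilde M\setminus M$ side it contributes $+\langle\langle d_{A^{(1)}}\tilde u_1,\nu\rangle_g,\overline{\varphi|_{\p M}}\rangle$, where the second trace coincides with $\langle d_{A^{(1)}}u_1,\nu\rangle_g|_{\p M}$ by the PDE for $\tilde u_1$ in a neighborhood of $\p M$. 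The matching Cauchy data of $u_1$ and $u_2$ on $\p M$ then yields cancellation.

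Finally, the boundary data on $\p\tilde M$ for $\tilde u_2$ agrees with that of $\tilde u_1$: on $\p\tilde M\subset\tilde M\setminus M^0$ one has $\tilde u_2=\tilde u_1$, and since $A^{(1)}=A^{(2)}$ and $q^{(1)}=q^{(2)}$ on $\tilde M\setminus M$, the magnetic normal derivative traces coincide as elements of $H^{-1/2}(\p\tilde M)$, which is precisely $C'_{g,A^{(1)},q^{(1)}}\subseteq C'_{g,A^{(2)},q^{(2)}}$. The reverse inclusion is symmetric. The only subtle point, and the step I expect to require the most care, is the distributional gluing across $\p M$ in the preceding paragraph: because $A^{(j)}\in L^\infty$ rather than smooth, one has to rely on the distributional definition \eqref{eq_int_2_dis} of $d^*(Au)$ and the formula \eqref{eq_trace_normal_int} of the trace (which is intrinsic and does not assume smoothness of the coefficients) when performing the two-sided integration by parts.
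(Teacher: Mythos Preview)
Your argument is correct and is precisely the standard extension/gluing argument that the paper has in mind: the paper gives no proof of this lemma and simply refers to \cite[Proposition 3.4]{Krup_Uhlmann_magnet}, where exactly this construction (restrict, match Cauchy data on $\partial M$, glue, and verify the equation distributionally across $\partial M$) is carried out in the Euclidean setting. Your handling of the only delicate point---the two-sided integration by parts across $\partial M$ using the intrinsic trace formula \eqref{eq_trace_normal_int} in place of classical Green's identities, so that only $A\in L^\infty$ is needed---is the right one; the same device appears in the paper's proof of Proposition~\ref{prop_unique}.
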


Finally, we shall also need  the following standard integral identity, see  \cite{DKSaloU_2009}, \cite{Krup_Uhlmann_magnet}. 

\begin{prop}
\label{prop_integral_identify}
Let $A^{(1)},A^{(2)}\in L^\infty(M,T^*M)$, $q^{(1)},q^{(2)}\in L^\infty(M,\C)$.  If $C_{g,A^{(1)},q^{(1)}}=C_{g,A^{(2)},q^{(2)}}$ then 
\begin{equation}
\label{eq_proof_1}
\begin{aligned}
\int_M i  \langle A^{(1)}-A^{(2)},& u_1d\overline{u_2}-\overline{u_2}du_1 \rangle_g dV_g \\
&+ \int_M  ( \langle A^{(1)}, A^{(1)}\rangle_g - \langle A^{(2)}, A^{(2)}\rangle_g +q^{(1)}-q^{(2)})u_1\overline{u_2}dV=0, 
\end{aligned}
\end{equation}
for any $u_1,u_2\in H^1(M^0)$ satisfying $L_{g,A^{(1)},q^{(1)}}u_1=0$ and $L_{g,\overline{A^{(2)}},\overline{q^{(2)}}}u_2=0$ in $\mathcal{D}'(M^0)$.  
\end{prop}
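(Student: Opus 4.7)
The plan is to use the hypothesis $C_{g,A^{(1)},q^{(1)}}=C_{g,A^{(2)},q^{(2)}}$ to replace $u_1$ by an auxiliary solution of the second equation with identical Cauchy data, then exploit the weak definition \eqref{eq_trace_normal_int} of the magnetic normal trace twice: once for $u_1$ and the auxiliary function, and once for $u_2$, in order to eliminate the auxiliary function from the interior pairing. A final algebraic expansion recovers the stated identity.

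First, since $(u_1|_{\pM}, \langle d_{A^{(1)}} u_1,\nu\rangle_g) \in C_{g,A^{(1)},q^{(1)}} = C_{g,A^{(2)},q^{(2)}}$, I pick $\t u_1 \in H^1(M^0)$ solving $L_{g,A^{(2)},q^{(2)}} \t u_1 = 0$ in $\mathcal{D}'(M^0)$ with $\t u_1|_{\pM}=u_1|_{\pM}$ and $\langle d_{A^{(2)}}\t u_1,\nu\rangle_g = \langle d_{A^{(1)}} u_1,\nu\rangle_g$ on $\pM$. Applying \eqref{eq_trace_normal_int} to $u_1$ and to $\t u_1$ with boundary datum $f=\overline{u_2}|_{\pM}$ (extended into $M^0$ by $\overline{u_2}\in H^1(M^0)$), the two boundary pairings coincide, which yields
\[
(d_{A^{(1)}} u_1, d_{\overline{A^{(1)}}} u_2)_{L^2(M,T^*M)} + (q^{(1)} u_1, u_2)_{L^2(M)} = (d_{A^{(2)}} \t u_1, d_{\overline{A^{(2)}}} u_2)_{L^2(M,T^*M)} + (q^{(2)} \t u_1, u_2)_{L^2(M)}.
\]
Next, to remove the auxiliary function, I apply \eqref{eq_trace_normal_int} to $u_2$, which solves $L_{g,\overline{A^{(2)}},\overline{q^{(2)}}} u_2 = 0$, with boundary datum $\overline{u_1}|_{\pM}=\overline{\t u_1}|_{\pM}$, extending once by $\overline{u_1}$ and once by $\overline{\t u_1}$; the boundary pairings again agree, and taking complex conjugates of the resulting identity (using Hermitian symmetry of the $L^2$ inner products) gives
\[
(d_{A^{(2)}} \t u_1, d_{\overline{A^{(2)}}} u_2) + (q^{(2)} \t u_1, u_2) = (d_{A^{(2)}} u_1, d_{\overline{A^{(2)}}} u_2) + (q^{(2)} u_1, u_2).
\]
Substituting back replaces $\t u_1$ by $u_1$ throughout.

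Finally, I expand $d_{A^{(j)}} u_1 = du_1 + iA^{(j)}u_1$ and $\overline{d_{\overline{A^{(j)}}} u_2} = d\overline{u_2} - iA^{(j)}\overline{u_2}$, and use $\C$-bilinearity of $\langle\cdot,\cdot\rangle_g$ to write the integrand of $(d_{A^{(j)}} u_1, d_{\overline{A^{(j)}}} u_2)_{L^2}$ as
\[
\langle du_1, d\overline{u_2}\rangle_g + i\langle A^{(j)}, u_1 d\overline{u_2} - \overline{u_2} du_1\rangle_g + \langle A^{(j)}, A^{(j)}\rangle_g u_1\overline{u_2}.
\]
Subtracting the $j=2$ instance from the $j=1$ instance of the identity obtained above cancels the common term $\langle du_1,d\overline{u_2}\rangle_g$ and produces exactly \eqref{eq_proof_1}. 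The only real subtlety is that under $A^{(j)}\in L^\infty$ the equations $L_{g,A^{(j)},q^{(j)}}u_j=0$ must be interpreted distributionally via \eqref{eq_int_1_mapping_prop}--\eqref{eq_int_2_dis}, so all manipulations with normal derivatives must be carried out through the weak trace \eqref{eq_trace_normal_int}; once one commits to that formulation, the argument is essentially bookkeeping.
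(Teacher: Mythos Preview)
Your proof is correct and follows essentially the same route as the paper: pick an auxiliary solution $\t u_1$ (the paper calls it $v_2$) with the same Cauchy data as $u_1$, use the weak trace \eqref{eq_trace_normal_int} to pass from the boundary equality to an interior identity, then exploit the extension-independence of the weak trace for $u_2$ (equivalently, the paper applies it to $\overline{u_2}$ as a solution of $L_{g,-A^{(2)},q^{(2)}}$) to replace $\t u_1$ by $u_1$, and finally expand. The only cosmetic difference is that the paper phrases the elimination step via the symmetry $\langle \langle d_{A^{(2)}} v_2,\nu\rangle_g, \overline{u_2}\rangle=\langle \langle d_{-A^{(2)}} \overline{u_2},\nu\rangle_g, v_2\rangle$ before invoking extension-independence, whereas you take complex conjugates after applying \eqref{eq_trace_normal_int} to $u_2$; these are the same manipulation.
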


\begin{proof}
 As $C_{g,A^{(1)},q^{(1)}}=C_{g,A^{(2)},q^{(2)}}$, there is $v_2\in H^1(M^0)$ such that $L_{g, A^{(2)}, q^{(2)}} v_2=0$ in $\mathcal{D}'(M^0)$ and 
\[
u_1|_{\p M}=v_2|_{\p M} , \quad \langle d_{A^{(1)}} u_1,\nu\rangle_g|_{\p M}= \langle d_{A^{(2)}} v_2,\nu\rangle_g|_{\p M}.
\] 
Hence, 
\begin{equation}
\label{eq_int_trace_1}
\langle \langle d_{A^{(1)}} u_1,\nu\rangle_g, \overline{u_2}\rangle_{H^{-\frac{1}{2}}(\p M)\times H^{\frac{1}{2}}(\p M)}=
\langle \langle d_{A^{(2)}} v_2,\nu\rangle_g, \overline{u_2}\rangle_{H^{-\frac{1}{2}}(\p M)\times H^{\frac{1}{2}}(\p M)}.
\end{equation}
Now using the fact that $L_{q,-A^{(2)}, q^{(2)}} \overline{u_2}=0$ in $\mathcal{D}'(M^0)$ and \eqref{eq_trace_normal_int}, we get 
\begin{equation}
\label{eq_int_trace_2}
\begin{aligned}
\langle \langle d_{A^{(2)}} v_2,\nu\rangle_g, \overline{u_2}\rangle_{H^{-\frac{1}{2}}(\p M)\times H^{\frac{1}{2}}(\p M)}&=
\langle \langle d_{-A^{(2)}} \overline{u_2},\nu\rangle_g, v_2\rangle_{H^{-\frac{1}{2}}(\p M)\times H^{\frac{1}{2}}(\p M)}\\
&=
\langle \langle d_{-A^{(2)}} \overline{u_2},\nu\rangle_g, u_1\rangle_{H^{-\frac{1}{2}}(\p M)\times H^{\frac{1}{2}}(\p M)},
\end{aligned}
\end{equation}
where in the last equality we have used that $u_1=v_2$ on $\p M$.  It follows from \eqref{eq_int_trace_1}
and \eqref{eq_int_trace_2}
that 
\[
\langle \langle d_{A^{(1)}} u_1,\nu\rangle_g, \overline{u_2}\rangle_{H^{-\frac{1}{2}}(\p M)\times H^{\frac{1}{2}}(\p M)}=\langle \langle d_{-A^{(2)}} \overline{u_2},\nu\rangle_g, u_1\rangle_{H^{-\frac{1}{2}}(\p M)\times H^{\frac{1}{2}}(\p M)},
\]
which proves the claim in view of \eqref{eq_trace_normal_int}. 
\end{proof}

Let  $(\tilde M,g)$ be an admissible simply connected manifold with connected boundary such that    $(M,g)\subset (\tilde M^0,g)\subset  (\R\times M_0^0, c(e\oplus g_0))$.  Applying Lemma \ref{lem_Cauchy_ext} and using that  $A^{(1)}=A^{(2)}=0$ and $q^{(1)}=q^{(2)}=0$ outside of $M$, we may and will assume in what follows that the manifold $(M,g)$ is simply connected with connected boundary and the coefficients $A^{(j)}$ and $q^{(j)}$ are compactly supported in the interior of $M$. 

Let us now rewrite the integral identity \eqref{eq_proof_1} in the following form, 
\begin{equation}
\label{eq_proof_1_new_iden}
\begin{aligned}
\int_M i  \langle A^{(1)}-A^{(2)},& u_1d u_2-u_2 du_1 \rangle_g dV_g \\
&+ \int_M  ( \langle A^{(1)}, A^{(1)}\rangle_g - \langle A^{(2)}, A^{(2)}\rangle_g +q^{(1)}-q^{(2)})u_1 u_2dV=0, 
\end{aligned}
\end{equation}
for any $u_1,u_2\in H^1(M^0)$ satisfying $L_{g,A^{(1)},q^{(1)}}u_1=0$ and $L_{g,- A^{(2)}, q^{(2)}} u_2=0$ in $\mathcal{D}'(M^0)$.

By Proposition  \ref{prop_cgo-admiss} for all $h>0$ small enough, there are  solutions $u_1,u_2\in H^1(M^0)$ to the magnetic Schr\"odinger equations 
 $L_{g,A^{(1)},q^{(1)}}u_1=0$ and $L_{g,-A^{(2)},q^{(2)}}u_2=0$ in $\mathcal{D}'(M^0)$,
of the form
\begin{equation}
\label{eq_proof_2}
\begin{aligned}
u_1&=e^{-\frac{\rho}{h}}(|g|^{-1/4} c^{1/2} e^{i\Phi^{(1)}_{h}} a_0(x_1,r)b(\theta) +r_1 )=e^{-\frac{\rho}{h}} (\alpha_1+r_1),\\
u_2&=e^{\frac{\rho}{h}}(|g|^{-1/4} c^{1/2} e^{i\Phi^{(2)}_{h}}  +r_2)=e^{\frac{\rho}{h}}(\alpha_2  +r_2 ),
\end{aligned}
\end{equation}
respectively. Here $\rho=x_1+ir $, 
\begin{equation}
\label{eq_proof_3}
\alpha_1=|g|^{-1/4} c^{1/2} e^{i\Phi^{(1)}_{h}} a_0(x_1,r)b(\theta), \quad \alpha_2=|g|^{-1/4} c^{1/2} e^{i\Phi^{(2)}_{h}},
\end{equation}
and
\begin{equation}
\label{eq_proof_4}
\|r_j\|_{L^2(M^0)}=o(1), \quad \|dr_j\|_{L^2(M^0)}=o(h^{-1}), \quad h\to 0, \quad j=1,2.
\end{equation}
Furthermore, $\Phi^{(j)}_{h}\in C^\infty (M) $ satisfies  \eqref{eq_cgo_19}, and 
\begin{equation}
\label{eq_proof_4_0}
\|\Phi^{(j)}-\Phi^{(j)}_{h}\|_{L^2(M)}=o(1), \quad h\to 0, 
\end{equation}
where 
\begin{align*}
&\Phi^{(1)}(x_1,r,\theta) = -\frac{1}{2} \frac{1}{\pi (x_1+i r)} * (A^{(1)}_1 +i A^{(1)}_r),\\
&\Phi^{(2)}(x_1,r,\theta)= \frac{1}{2} \frac{1}{\pi (x_1+i r)} * (A^{(2)}_1 +i A^{(2)}_r),
\end{align*}
where $A^{(j)}=A^{(j)}_1 dx_1+A^{(j)}_rdr+A^{(j)}_\theta d\theta$, $j=1,2$.  Thus, 
\begin{equation}
\label{eq_proof_4_1}
\Phi=\Phi^{(1)}+\Phi^{(2)}\in L^\infty(M)
\end{equation}
 satisfies 
\begin{equation}
\label{eq_proof_4_2}
\overline{\p} \Phi+\frac{1}{2}(\tilde A_1+i\tilde A_r)=0\quad \text{in}\quad M,
\end{equation}
where $\tilde A=A^{(1)}-A^{(2)}$. 

It follows from \eqref{eq_cgo_19} that 
\begin{equation}
\label{eq_proof_5}
\|\alpha_j\|_{L^\infty(M)}=\mathcal{O}(1), \quad \|d \alpha_j\|_{L^\infty(M)}=\mathcal{O}(h^{-\sigma}), \quad h\to 0, \quad 0<\sigma<1/2.
\end{equation}

We shall next insert  $u_1$ and $u_2$, given by \eqref{eq_proof_2},  into \eqref{eq_proof_1_new_iden}, multiply it by $h$, and let $h\to 0$.  To that end, we first have
\begin{equation}
\label{eq_proof_6}
\begin{aligned}
u_1du_2-u_2du_1=&\frac{2d\rho}{h} (\alpha_1\alpha_2 + \alpha_1r_2+\alpha_2 r_1+r_1r_2\\
&+(\alpha_1+r_1)(d\alpha_2+dr_2)-(\alpha_2+r_2)(d\alpha_1+dr_1).
\end{aligned}
\end{equation}
We conclude from \eqref{eq_proof_1_new_iden} in view of \eqref{eq_proof_6}, \eqref{eq_proof_4} and \eqref{eq_proof_5} that 
\begin{equation}
\label{eq_proof_7}
\lim_{h\to 0} \int_M \langle \tilde A, d\rho \rangle_g \alpha_1\alpha_2 dV= \lim_{h\to 0} \int_M \langle \tilde A, d\rho \rangle_g |g|^{-1/2}ce^{i(\Phi^{(1)}_{h}+\Phi^{(2)}_{h})}a_0b dV=0. 
\end{equation}

We claim next that 
\begin{equation}
\label{eq_proof_8}
\lim_{h\to 0} \int_M \langle \tilde A, d\rho \rangle_g  |g|^{-1/2}ce^{i(\Phi^{(1)}_{h}+\Phi^{(2)}_{h})}a_0b dV =
 \int_M \langle \tilde A, d\rho \rangle_g |g|^{-1/2}ce^{i\Phi}a_0b dV,
\end{equation}
where $\Phi$ is given by \eqref{eq_proof_4_1}. This follows from the estimates,
\begin{align*}
\bigg|  \int_M \langle \tilde A, d\rho \rangle_g |g|^{-1/2}c(e^{i(\Phi^{(1)}_{h}+\Phi^{(2)}_{h})}-e^{i\Phi})a_0b dV \bigg|\le C\| e^{i(\Phi^{(1)}_{h}+\Phi^{(2)}_{h})}-e^{i\Phi} \|_{L^2(M)}\\
\le \| \Phi^{(1)}_{h}+\Phi^{(2)}_{h}-\Phi_1-\Phi_2 \|_{L^2(M)}\to 0, \quad h\to 0.
\end{align*}
Here we have used \eqref{eq_proof_4_0}, the inequality 
\[
|e^z-e^w|\le |z-w|e^{\max(\text{Re}\, z,\text{Re}\, w)}, \quad z,w\in \C,
\]
and the fact that $\Phi^{(j)}, \Phi^{(j)}_{h}\in L^\infty(M)$ and $\|\Phi^{(j)}_{h}\|_{L^\infty(M)}\le C$ uniformly in $h$.

Now \eqref{eq_proof_7} and \eqref{eq_proof_8} imply that 
\[
 \int_M \langle \tilde A, d\rho \rangle_g |g|^{-1/2}ce^{i\Phi}a_0b dV=0. 
\]
Writing out this integral in the global coordinates $(x_1,r,\theta)$, and using that $dV=|g|^{1/2}dx_1drd\theta$, and \eqref{eq_cgo_8_metrix},  we get
\[
\int_M (\tilde A_1+i\tilde A_r)e^{i\Phi}a_0(x_1,r)b(\theta)dx_1drd\theta=0. 
\]
Now since the function
\[
(x_1,r,\theta)\mapsto (\tilde A_1+i\tilde A_r)e^{i\Phi}a_0(x_1,r)\in L^1(M),
\]
by Fubini's theorem the function 
\[
(x_1,r)\mapsto (\tilde A_1+i\tilde A_r)e^{i\Phi}a_0(x_1,r)\in L^1
\] 
for almost all $\theta$, and 
\[
\theta\mapsto \int_{\Omega_\theta} (\tilde A_1+i\tilde A_r)e^{i\Phi}a_0(x_1,r)dx_1dr \in L^1
\]
where $\Omega_\theta=\{(x_1,r)\in \R^2: (x_1,r,\theta)\in M\}$. Since $b\in C^\infty(\mathbb{S}^{n-2})$ is arbitrary, we conclude that 
\begin{equation}
\label{eq_proof_9}
\int_{\Omega_\theta} (\tilde A_1+i\tilde A_r)e^{i\Phi}a_0(x_1,r)d\overline{\rho} \wedge d\rho=0,\quad \text{for a.a. }\theta. 
\end{equation}

In what follows we view $\Omega_\theta$ as a domain in the complex plane with the complex variable $\rho$, and by Sard's theorem, for almost all $\theta$ the boundary of $\Omega_\theta$ is $C^\infty$ smooth, see \cite{Krup_Lassas_Uhlmann_trans}. 
If follows from \eqref{eq_proof_9} and \eqref{eq_proof_4_2} that 
\begin{equation}
\label{eq_proof_10}
\int_{\Omega_\theta} \overline{\p} (e^{i\Phi}a_0) d\overline{\rho} \wedge d\rho=0, \quad \text{for a.a. }\theta. 
\end{equation}
We shall now discuss regularity properties of $\Phi(\cdot, \cdot, \theta)$.  In view of \eqref{eq_proof_4_2} and the fact that for a.a. $\theta$,  $\tilde A(\cdot, \cdot, \theta)\in L^\infty(\C)$ is compactly supported, we conclude that  $ \overline{\p} \Phi(\cdot, \cdot, \theta)\in L^p(\C)$ for $1\le p\le \infty$.  Using the boundedness of  the Beurling--Ahlfors operator $\p \overline{\p}^{-1}$  on $L^p(\C)$ for $1<p<\infty$, we get $\p \Phi=\p \overline{\p}^{-1} (\overline{\p} \Phi)\in L^p(\C)$, and therefore, $\nabla \Phi \in L^p(\C)$. Since $\Phi\in L^\infty(\C)$, we obtain that $\Phi(\cdot, \cdot, \theta)\in W^{1,p}_{\text{loc}}(\C)$, $1<p<\infty$. 
In particular, $\Phi(\cdot, \cdot, \theta)\in H^1(\Omega_\theta)$, and thus, $ e^{i\Phi(\cdot, \cdot, \theta)}\in  H^1(\Omega_\theta)$.  Hence, by Stokes' theorem, we conclude from \eqref{eq_proof_10} that 
\begin{equation}
\label{eq_proof_11}
\int_{\p \Omega_\theta} e^{i\Phi}a_0 d\rho=0. 
\end{equation}

Furthermore, as  $\Phi(\cdot, \cdot, \theta)\in W^{1,p}_{\text{loc}}(\C)$, $1<p<\infty$,  for a.a. $\theta$, by Sobolev's embedding $\Phi (\cdot, \cdot, \theta)$ is locally H\"older continuous of order $1-\delta$ for any $\delta>0$, see \cite[Theorem 4.5.12]{Hormander_book_1} and \cite[Lemma 3.4]{Sun_Uhlmann_1993}. 
Thanks to validity of  the Plemelj--Sokhotski--Privalov formula for H\"older continuous functions, see  \cite{McLean_1988}, the argument in \cite[Lemma 5.1]{DKSU_2007} shows that there exists a non-vanishing function $F\in C(\overline{\Omega_\theta})$, holomorphic in $\Omega_\theta$, such that 
\[
F|_{\p \Omega_\theta}=e^{i\Phi}|_{\p \Omega_\theta}.
\]
The arguments in \cite[Section 7]{Knudsen_Salo} show that $F$ admits a holomorphic logarithm $G\in C(\overline{\Omega_\theta})$, i.e.  $F=e^G$, and furthermore,  $(G-i\Phi)|_{\p \Omega_\theta}\in 2\pi i \Z$. Choosing 
\[
a_0=G e^{-G} e^{i\lambda(x_1+ir)},
\]
with $\lambda\in \R$ in \eqref{eq_proof_11}, we get 
\[
\int_{\p \Omega_\theta} i\Phi  e^{i\lambda(x_1+ir)}  d\rho=0. 
\]
By Stokes's theorem and \eqref{eq_proof_4_2}, we obtain that 
\begin{equation}
\label{eq_proof_12}
\int\!\!\!\int_{\Omega_\theta} (\tilde A_1+i\tilde A_r) e^{i\lambda(x_1+ir)} dx_1dr=0, \quad \text{for a.a. }\theta. 
\end{equation}
Multiplying \eqref{eq_proof_12} by an arbitrary function $b\in C^\infty(\mathbb{S}^{n-2})$ and integrating with respect to $\theta$, we get 
\begin{equation}
\label{eq_proof_13}
\int\!\!\!\int\!\!\!\int_{\R\times D} (\tilde A_1+i\tilde A_r) e^{i\lambda(x_1+ir)} b(\theta) dx_1drd\theta=0. 
\end{equation}
Here we take $\omega\in \p D$ to define the Riemannian polar normal coordinates $(r,\theta)$. 

Set
\begin{equation}
\label{eq_proof_14_-1}
f(x')=\int e^{i\lambda x_1}\tilde A_1(x_1,x')dx_1,\quad x'\in D.
\end{equation}
We have $f\in L^\infty(D)$.  
Set also 
\begin{equation}
\label{eq_proof_14_0}
\alpha=\sum_{j=2}^n  \bigg(\int e^{i\lambda x_1} \tilde A_j(x_1,x')dx_1\bigg)dx_j.
\end{equation}
We have $\alpha\in L^\infty(D, T^*D)$.  Since $(r,\theta)$ are Riemannian polar coordinates in $D$ with center $\omega$, we know that the curve $\gamma_\theta: r\mapsto (r,\theta)$
is the unit speed geodesic in $D$ emanating from $\omega$ in the direction $\theta$. Notice that when $\tilde A$ is smooth, we have $\alpha_r=\alpha(\p_r)=\alpha(\dot{\gamma}_\theta(r))$.  In our case,  \eqref{eq_proof_13} can be written as 
\begin{equation}
\label{eq_proof_14}
\int_{\mathbb{S}^{n-2}}\int_0^{\tau(\omega, \theta)}  [f(\gamma_\theta(r))+i \alpha (\dot{\gamma}_\theta(r))] e^{-\lambda r} b(\theta)dr d\theta=0, 
\end{equation}
where $\tau(\omega, \theta)$  is the time when  the geodesic $\gamma_\theta$ exits $D$.  The integral in \eqref{eq_proof_14} is related to the attenuated geodesic ray transform acting on the function $f$ and $1$-form $i\alpha$ in $D$ with constant attenuation $-\lambda$. In order to proceed we shall need the following result from \cite[Proposition 5.1]{Assylbekov_Yang_2017} dealing with  the injectivity of this transform, which is 
an extension of \cite[Lemma 5.1]{DKSalo_2013} established in the case when $\alpha=0$.

\begin{prop}
\label{prop_Assylbekov}
Let $(D,g_0)$ be an $(n-1)$-dimensional simple manifold. Let $f\in L^\infty(D)$ and $\alpha\in L^\infty(D, T^*D)$ be a $1$-form.  Consider the integrals
\[
\int_{\mathbb{S}^{n-2}}\int_0^{\tau(\omega, \theta)}  [f(\gamma_\theta(r))+ \alpha (\dot{\gamma}_\theta(r))] e^{-\lambda r} b(\theta)dr d\theta,
\] 
where $(r,\theta)$ are polar normal coordinates in $(D,g_0)$ centered at some $\omega\in \p D$ and $\tau(\omega, \theta)$ is the time when the geodesic $\gamma_\theta:  r\mapsto (r, \theta)$ exits $D$.  If  $|\lambda|$ is sufficiently small, and if these integrals vanish  for all $\omega\in \p D$ and all $b\in C^\infty(\mathbb{S}^{n-2})$, then there is $p\in W^{1,\infty}(D^0)$ with $p|_{\p D}=0$ such that $f=-\lambda p$ and $\alpha=dp$. 
\end{prop}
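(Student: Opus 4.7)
The plan is to reduce the $L^\infty$ setting to the smooth category via the ellipticity of the normal operator of the attenuated geodesic ray transform, and then to apply a classical injectivity statement for small attenuation on simple manifolds.

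First I would extend $(D, g_0)$ slightly to a simple manifold $(\tilde D, g_0)$ with $D \subset\subset \tilde D^0$, and extend $f$ and $\alpha$ by zero. The hypothesis that the stated integrals vanish for every $\omega \in \partial D$ and every $b \in C^\infty(\mathbb{S}^{n-2})$ is a weak formulation of the statement $I_\lambda(f, \alpha) = 0$ on $\partial_- S\tilde D$: since $f$ and $\alpha$ are only bounded and cannot be restricted to individual geodesics, pointwise vanishing is replaced by pairing against smooth test functions on $\partial_- S\tilde D$. Equivalently, by duality, the pair $(f, \alpha)$ annihilates the $L^2$--range of $I_\lambda^*$ applied to smooth test functions.

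The key step is to introduce the normal operator $N_\lambda = I_\lambda^* I_\lambda$, which on $\tilde D$ is a pseudodifferential operator of order $-1$ acting on pairs $(f, \alpha)$, with principal symbol elliptic modulo the natural gauge $(f, \alpha) \mapsto (f - \lambda p, \alpha + dp)$; this ellipticity was established in the unattenuated case by Stefanov--Uhlmann and extended to small attenuation by Frigyik--Stefanov--Uhlmann and subsequent authors. Using the continuity of the Schwartz kernel of $N_\lambda$ off the diagonal and its integrability across it, $N_\lambda$ extends boundedly to the relevant $L^p$ spaces, and the weak vanishing of the boundary integrals translates into $N_\lambda(f, \alpha) = 0$ in $\mathcal{D}'(\tilde D^0)$. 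Elliptic regularity modulo gauge then produces a function $p_0$ on $\tilde D^0$ such that $f + \lambda p_0$ and $\alpha - dp_0$ lie in $C^\infty(D^0)$.

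At this point, having reduced to smooth data, I would invoke the classical injectivity of the attenuated geodesic ray transform on simple manifolds for small $|\lambda|$ (Salo--Uhlmann, Sharafutdinov): the only smooth solutions of $I_\lambda(\tilde f, \tilde \alpha) = 0$ are the gauge solutions $\tilde f = -\lambda \tilde p$, $\tilde \alpha = d\tilde p$ with $\tilde p|_{\partial D} = 0$. Combining this with the gauge $p_0$ recovered from ellipticity yields $p \in L^\infty(D^0)$ with $f = -\lambda p$ and $\alpha = dp$; since $dp = \alpha \in L^\infty$, $p$ is Lipschitz, hence lies in $W^{1,\infty}(D^0)$, and the boundary condition $p|_{\partial D} = 0$ can be arranged by subtracting a constant (the boundary trace is well-defined and constant because $\partial D$ is connected and $dp$ is tangential-free on it in the appropriate weak sense).

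The main obstacle is the first step: rigorously deducing $N_\lambda(f, \alpha) = 0$ distributionally from the weak boundary hypothesis, given only $L^\infty$ regularity. This requires carefully writing $N_\lambda$ via its integral kernel and using Fubini together with density of smooth test functions on $\partial_- S\tilde D$, exploiting the strict convexity of the boundary to control behaviour near the glancing set. Once the distributional identity is established, the ellipticity-then-smooth-injectivity pipeline runs without further difficulty.
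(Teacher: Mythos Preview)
The paper does not prove this proposition at all: it is quoted verbatim from \cite[Proposition 5.1]{Assylbekov_Yang_2017} (extending \cite[Lemma 5.1]{DKSalo_2013} from the case $\alpha=0$), and the paper simply invokes it. The only hint the paper gives about the argument is the remark that in \cite{DKSalo_2013} and \cite{Assylbekov_Yang_2017} ``the authors use duality arguments and the ellipticity of the normal operator, associated to the attenuated geodesic ray transform.'' Your proposed strategy --- extend to a slightly larger simple manifold, interpret the vanishing of the weighted integrals as a weak/distributional statement, pass to the normal operator $N_\lambda=I_\lambda^*I_\lambda$, use its ellipticity modulo the natural gauge to upgrade regularity, and then invoke the smooth injectivity theorem for small attenuation (which in this paper's references is \cite{DKSaloU_2009}, building on \cite{Anikonov}, \cite{Mukhometov}, \cite{Sharafutdinov_1}, \cite{Sharafutdinov_2}) --- is exactly this approach, and is the argument carried out in the cited references. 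There is nothing to compare against in the present paper itself.
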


Now varying the point $\omega\in \p D$ in the construction of the complex geometric optics solution in Proposition \ref{prop_cgo-admiss} and applying  Proposition \ref{prop_Assylbekov},  for all $\lambda$ small enough, we have 
\begin{equation}
\label{eq_proof_15_0}
f=-\lambda p
\end{equation}
 and 
\begin{equation}
\label{eq_proof_15}
\alpha=-i dp
\end{equation}
 where $p\in W^{1,\infty}(D^0)$ with $p|_{\p D}=0$.  Using \eqref{eq_proof_14_0},  \eqref{eq_proof_15},  we get 
\begin{equation}
\label{eq_proof_15_exp}
\int e^{i\lambda x_1} \tilde A_j(x_1,x')dx_1=-i \p_{x_j} p, 
\end{equation}
for all $|\lambda|$ small enough. Viewing $\tilde A_j\in (L^\infty\cap \mathcal{E}')(\R_{x_1}, L^2(D))$, we see that $\mathcal{F}^{-1}_{x_1\to \lambda}(\tilde A(x_1,x'))\in \text{Hol}(\C, L^2(D))$. It follows from \eqref{eq_proof_15_exp} and the analyticity of the Fourier transform that 
\[
\mathcal{F}^{-1}_{x_1\to \lambda} \big(\p_{x_k}\tilde A_j -\p_{x_j}\tilde A_k\big)=0, \quad\text{for all } \lambda\in \C,\quad  j,k=2,\dots, n,
\]
where $\p_{x_k}\tilde A_j -\p_{x_j}\tilde A_k\in (L^\infty\cap \mathcal{E}')(\R_{x_1}, H^{-1}(D))$. 
We conclude that 
\[
\p_{x_k}\tilde A_j -\p_{x_j}\tilde A_k=0, \quad j,k=2,\dots, n. 
\]

By \eqref{eq_proof_15_0},  \eqref{eq_proof_15}, \eqref{eq_proof_14_-1}, \eqref{eq_proof_14_0},  we have
\begin{align*}
0=\p_{x_j} f+i\lambda \alpha_j&= 2\pi \mathcal{F}^{-1}_{x_1\to \lambda} (\p_{x_j} \tilde A_1)+ 2\pi  i\lambda  \mathcal{F}^{-1}_{x_1\to \lambda} (\tilde A_j)\\
&= 2\pi \mathcal{F}^{-1}_{x_1\to \lambda} (\p_{x_j} \tilde A_1 -\p_{x_1}\tilde A_j), \quad j=2,\dots, n,
\end{align*}
and therefore, $\p_{x_j}\tilde A_1-\p_{x_1} \tilde A_j=0$. Hence, $d\tilde A=0$ in $M$, and thus, $dA^{(1)}=dA^{(2)}$ in $M$.

Our next goal is to show that $q^{(1)}=q^{(2)}$ in $M$. 
Since $M$ is simply connected, by the Poincar\'e lemma for currents, see \cite{de_Rham},  we conclude that  there is $\phi\in \mathcal{D}'(M)$ such that $d\phi =A^{(1)}-A^{(2)}\in L^\infty(M, T^*M) \cap \mathcal{E}'(M^0, T^*M^0)$. It follows from \cite[Theorem 4.5.11]{Hormander_book_1} that $\phi$ is continuous and $\phi$ is a constant $c$ near $\p M$.  Therefore, $\phi\in W^{1,\infty}(M^0)$, and since the boundary $\p M$ is connected by considering $\phi-c$, we may assume that $\phi=0$ on $\p M$. 

By Lemma \ref{lem_gauge}, we have   $C_{g, A^{(1)},q^{(1)}}= C_{g, A^{(2)},q^{(2)}}=C_{g, A^{(2)}+d\phi,q^{(2)}}=C_{g, A^{(1)},q^{(2)}}$.  We may assume therefore that $A^{(1)}=A^{(2)}$ and we will denote this $1$-form by $A$. 
The integral identity   \eqref{eq_proof_1_new_iden} becomes
\begin{equation}
\label{eq_proof_1_new_iden_q}
\int_M  ( q^{(1)}-q^{(2)})u_1 u_2dV=0, 
\end{equation}
for any $u_1,u_2\in H^1(M^0)$ satisfying $L_{g,A,q^{(1)}}u_1=0$ and $L_{g,- A, q^{(2)}} u_2=0$ in $\mathcal{D}'(M^0)$.

By Proposition  \ref{prop_cgo-admiss} for all $h>0$ small enough, there are  solutions $u_1,u_2\in H^1(M^0)$ to the magnetic Schr\"odinger equations 
 $L_{g,A,q^{(1)}}u_1=0$ and $L_{g,-A,q^{(2)}}u_2=0$ in $\mathcal{D}'(M^0)$,
of the form
\begin{equation}
\label{eq_proof_2_q}
\begin{aligned}
u_1&=e^{-\frac{\rho}{h}}(|g|^{-1/4} c^{1/2} e^{i\Phi_{h}} e^{i\lambda(x_1+ir)}b(\theta) +r_1 ),\\
u_2&=e^{\frac{\rho}{h}}(|g|^{-1/4} c^{1/2} e^{-i\Phi_{h}}  +r_2),
\end{aligned}
\end{equation}
respectively. Here $\rho=x_1+ir $, $\lambda\in \R$, $b$ is smooth,   $\Phi_{h}\in C^\infty (M) $ satisfies  \eqref{eq_cgo_19}, and  \eqref{eq_cgo_20}, and $\|r_j\|_{H^1_{\text{scl}}(M^0)}=o(1)$ as $h\to 0$. 
Substituting  $u_1$ and $u_2$, given by \eqref{eq_proof_2_q},  into \eqref{eq_proof_1_new_iden_q}, letting $h\to 0$, and using that $q^{(1)}=q^{(2)}=0$ outside of $M$, we get  
\begin{equation}
\label{eq_proof_16}
\int\!\!\!\int\!\!\!\int_{\R\times D}(q^{(1)}-q^{(2)}) e^{i\lambda(x_1+ir)} c(x_1,r,\theta)b(\theta)dx_1drd\theta=0. 
\end{equation}
Here  we take $\omega\in \p D$ to define the Riemannian polar normal coordinates $(r,\theta)$. 

Set 
\[
f (r,\theta)=\int e^{i \lambda x_1} (q^{(1)}-q^{(2)})  c(x_1,r,\theta)  dx_1. 
\]
We have  $f\in L^\infty(D)$. Thus, it follows from \eqref{eq_proof_16} that 
\[
\int_{\mathbb{S}^{n-2}}\int_0^{\tau(\omega, \theta)} f(r,\theta) e^{-\lambda r} b(\theta)drd\theta=0,
\]
for all $\omega\in \p D$, all $b\in C^\infty(\mathbb{S}^{n-2})$.

An application of Proposition \ref{prop_Assylbekov}  with $\alpha=0$ gives that 
\[
f (r,\theta)=\int e^{i \lambda x_1} (q^{(1)}-q^{(2)})  c(x_1,r,\theta)  dx_1=0,\quad \text{for a.a. } r, \theta,
\]
and for $|\lambda|$ sufficiently small, and hence, for all $\lambda\in \C$ by the analyticity of the Fourier transform of the compactly supported function 
$(q^{(1)}-q^{(2)})c(\cdot, r,\theta)\in L^\infty$ for a.a. $(r,\theta)$. We conclude that $q^{(1)}=q^{(2)}$. 
This completes the proof of Theorem \ref{thm_main}.

\section{Gaussian beams quasimodes on conformally transversally anisotropic manifolds}

\label{sec_Gaussian_beam}

Let $(M,g)$ be  a conformally transversally anisotropic manifold so that $(M,g)\subset (\R\times M_0, c(e\oplus g_0))$.  Throughout this section we shall work under the simplifying assumption that the conformal factor $c=1$.  Replacing $(M_0,g_0)$ by a slightly larger manifold if necessary, we may assume that $(M,g)\subset (\R\times M^0_0, e\oplus g_0)$.

In this section we shall be concerned with constructing Gaussian beam quasimodes  for the magnetic Schr\"odinger operator $L_{g,A,q}$,  conjugated  by a liming Carleman weight,  with $A\in C(M, T^*M)$ and $q\in L^\infty(M,\C)$.  The construction of Gaussian beam quasimodes has a very long tradition in spectral theory and microlocal analysis, see \cite{Babich},  \cite{Ralston_1977}, \cite{Ralston_1982}. In the context closely related to our discussion it was given recently in \cite{DKuLS_2016}
in the case when $A=0$, and  in \cite{Cekic} assuming that $A\in C^\infty (M, T^*M)$.  Similarly to  \cite{Cekic}, our quasimodes will be constructed on the manifold $M$ and will be localized to a non-tangential  geodesic on the transversal manifold $M_0$.  Here a unit speed geodesic $\gamma:[0,L]\to M_0$ is called non-tangential if $\gamma(0),\gamma(L)\in \p M_0$,  $\dot{\gamma}(0), \dot{\gamma}(L)$ are non-tangential vectors on  $\p M_0$ and $\gamma(t)\in M_0^0$ for all $0<t<L$, see \cite{DKuLS_2016}.

In what follows it will be convenient to extend $A$ to a continuous $1$-form with compact support in $\R\times M_0^0$, and we shall write $A\in C_0(\R\times M_0^0, T^*(\R\times M_0^0))$.  Now using a partition of unity argument together with a regularization  in each coordinate patch, we get the following result. 

\begin{prop}
\label{prop_approximation_cont}
There exists a family  $A_\tau\in C^\infty_0(\R\times M_0^0, T^*(\R\times M_0^0))$ such that 
\begin{equation}
\label{eq_Car_20_0_cont}
\|A-A_\tau\|_{L^\infty}=o(1),\quad \tau\to 0,
\end{equation}
and 
\begin{equation}
\label{eq_Car_20_cont}
\begin{aligned}
\|A_\tau\|_{L^\infty}=\mathcal{O}(1), \quad
\|\nabla A_\tau\|_{L^\infty}=\mathcal{O}(\tau^{-1}),\\ 
\|\Delta A_\tau\|_{L^\infty}=\mathcal{O}(\tau^{-2}),\quad \tau\to 0.  
\end{aligned}
\end{equation}
\end{prop}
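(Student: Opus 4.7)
The plan is to construct $A_\tau$ by a standard partition-of-unity plus mollification argument, where the regularization parameter $\tau$ is exactly the scale of the mollifier, so the derivative estimates follow from classical mollifier bounds.

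First, I would choose a finite cover $\{U_j\}_{j=1}^N$ of $\supp(A)$ by coordinate charts in $\R \times M_0^0$, together with a subordinate smooth partition of unity $\{\chi_j\}$ such that $\sum_j \chi_j = 1$ on a neighborhood of $\supp(A)$ and each $\chi_j$ is compactly supported in $U_j$, sitting at positive distance from $\partial U_j$. In each chart $U_j$, the form $\chi_j A$ has components $(\chi_j A)_k \in C_0(U_j)$ with compact support at positive distance from $\partial U_j$, which I can regard as compactly supported continuous functions on $\R^n$ after extending by zero.

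Next, fix a standard mollifier $\psi \in C_0^\infty(\R^n)$ with $\psi \ge 0$, $\int \psi = 1$, and set $\psi_\tau(x) = \tau^{-n}\psi(x/\tau)$. For each coordinate component, define the regularization by convolution, $(\chi_j A)_{k,\tau} := \psi_\tau * (\chi_j A)_k$. For $\tau$ sufficiently small (smaller than the distance from $\supp(\chi_j)$ to $\partial U_j$), this remains supported inside $U_j$, and I reassemble these back into a global smooth 1-form $A_\tau \in C_0^\infty(\R \times M_0^0, T^*(\R\times M_0^0))$ by summing over $j$ and reinterpreting the components in the coordinate frames.

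The required estimates are now classical. Uniform continuity of $A$ on its compact support yields $\|(\chi_j A)_k - \psi_\tau * (\chi_j A)_k\|_{L^\infty} \to 0$ as $\tau \to 0$, which gives \eqref{eq_Car_20_0_cont} after summing in $j$ and accounting for the smooth (bounded) transition between coordinate frames, which contributes only an $\mathcal{O}(1)$ multiplicative constant. The derivative bounds follow from Young's inequality applied to $\partial^\alpha(\psi_\tau * f) = (\partial^\alpha \psi_\tau) * f$: since $\|\partial^\alpha \psi_\tau\|_{L^1} = \mathcal{O}(\tau^{-|\alpha|})$ and $\|\chi_j A\|_{L^\infty} = \mathcal{O}(1)$, one obtains $\|\nabla A_\tau\|_{L^\infty} = \mathcal{O}(\tau^{-1})$ and $\|\Delta A_\tau\|_{L^\infty} = \mathcal{O}(\tau^{-2})$, which is precisely \eqref{eq_Car_20_cont}.

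The argument presents no real obstacle; the only points requiring mild care are (i) ensuring that mollification commutes adequately with the changes of coordinates used to identify $A$ with its components, which is handled by fixing the cover first and mollifying each localized piece in its own chart, and (ii) taking $\tau$ small enough so the supports remain inside the charts. Both are standard and cost nothing beyond an overall $\mathcal{O}(1)$ factor in the constants.
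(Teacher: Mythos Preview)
Your proposal is correct and matches the paper's approach exactly: the paper itself only states that the result follows from ``a partition of unity argument together with a regularization in each coordinate patch,'' and your write-up simply fills in the standard details of that sketch. There is nothing to add.
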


The Gaussian beam quasimodes to be constructed in this section will be used to construct complex geometric optics solutions for the magnetic Schr\"odinger operator $L_{g,A,q}$ in Section \ref{sec_CGO_based_quasi}.  To motivate our construction, we shall now proceed to introduce the conjugated operator and to this end let us write  $x=(x_1,x')$ for coordinates in $\R\times M_0$, globally in $\R$ and locally in $M_0$. Let
\[
s=\mu+i\lambda,\quad \mu,\lambda\in \R, \quad \mu\ge 1, \quad \lambda \text{ fixed}.  
\]

Our complex geometric optics solution to the equation 
\begin{equation}
\label{eq_gauss_1}
L_{g,A,q} u=0\quad \text{in}\quad M,
\end{equation}
will have the form
\begin{equation}
\label{eq_gauss_2}
u=e^{-sx_1}(v+r),
\end{equation}
where $v=v_s$ is an amplitude type term and $r=r_s$ is a correction term.  A function $u$ given by \eqref{eq_gauss_2} is a solution of \eqref{eq_gauss_1}
 provided that 
 \[
 e^{sx_1}L_{g,A,q} e^{-sx_1}r=- e^{sx_1}L_{g,A,q} e^{-sx_1}v.
 \]

As $g=e\oplus g_0$, we have
\begin{align*}
e^{sx_1}\circ (-\Delta_g) \circ e^{-sx_1}&= e^{sx_1}\circ (-\p_{x_1}^2-\Delta_{g_0}) \circ e^{-sx_1}
=-(\p_{x_1}- s)^2-\Delta_{g_0}\\
&=-\Delta_g+2s\p_{x_1}-s^2,
\end{align*}
and
\begin{align*}
&i e^{sx_1} d^* (A e^{-sx_1}v)=i d^*(Av)+i A_1 s v,\\
&- i e^{sx_1}  \langle A, d (e^{-sx_1} v)\rangle_g= -i  \langle A,dv \rangle_g+i A_1 s v. 
\end{align*}
Therefore, 
\begin{equation}
\label{eq_gauss_3}
\begin{aligned}
e^{sx_1}L_{g,A,q} e^{-sx_1}v=&-\Delta_g v +  i d^*(Av) -i \langle A,dv \rangle_g +(\langle A, A \rangle_g+q)v \\
& +2s\p_{x_1} v-s^2 v   +2 i s A_1  v.
\end{aligned}
\end{equation}
Here we are interested in choosing $v$ so that  the expression in  \eqref{eq_gauss_3} is small and to this end we have the following result. 

\begin{prop}
\label{prop_Gaussian_beams}
Let $(M,g)$ be a transversally anisotropic manifold so that $(M,g)\subset (\R\times M_0^0, g)$ with   $g=e\oplus g_0$. 
Let $A^{(1)}, A^{(2)}\in C(M,T^*M)$ and $q^{(1)},q^{(2)}\in L^\infty(M,\C)$. Let  $\gamma:[0,L]\to M_0$ be a unit speed  non-tangential geodesic on $M_0$,  and let $s=\mu+i\lambda$, $\mu\ge 1$, with $\lambda\in \R$  being fixed. Then there exist  families  of Gaussian beam quasimodes $v_s, w_s\in C^\infty(M)$ such that 
\begin{equation}
\label{eq_prop_gaussian_1}
\|v_s\|_{H^{1}_{\emph{\text{scl}}}(M^0)}=\mathcal{O}(1),\quad  \| e^{sx_1}h^2L_{g,A^{(1)},q^{(1)}} e^{-sx_1}v_s\|_{H^{-1}_{\emph{\text{scl}}}(M^0)}=o(h),
\end{equation}
and
\begin{equation}
\label{eq_prop_gaussian_2}
\|w_s\|_{H^{1}_{\emph{\text{scl}}}(M^0)}=\mathcal{O}(1),\quad 
\| e^{-sx_1}h^2L_{g,\overline{A^{(2)}},\overline{q^{(2)}}} e^{sx_1}w_s\|_{H^{-1}_{\emph{\text{scl}}}(M^0)}=o(h),
\end{equation}
as $h=\frac{1}{\mu}\to 0$. 
Furthermore, for each $\psi\in C(M_0)$ and $x'_1\in \R$, we have
\begin{equation}
\label{eq_prop_gaussian_3}
\lim_{h\to 0} \int_{\{x'_1\}\times M_0} v_s \overline{w_s} \psi dV_{g_0}= \int_0^L e^{-2\lambda t} \eta(x_1,t)e^{\Phi^{(1)}(x'_1,t)+\overline{\Phi^{(2)} (x'_1,t)}} \psi(\gamma(t)) dt.
\end{equation}
 Here $\Phi^{(1)}, \Phi^{(2)}\in C(\R\times [0,L])$ satisfy the following transport equations,
\[
(\p_{x_1}-i\p_t) \Phi^{(1)}=-i A^{(1)}_1(x_1,\gamma(t))- A^{(1)}_t(x_1,\gamma(t)),
\]
\[
 (\p_{x_1}+i\p_t) \Phi^{(2)}=-i \overline{A^{(2)}_1(x_1,\gamma(t))}+\overline{A^{(2)}_t (x_1,\gamma(t))},
\]
where 
\[
A^{(j)}_t (x_1,\gamma(t))=\langle A^{(j)}(x_1,\gamma(t)), (0,\dot{\gamma}(t))\rangle, \quad j=1,2,
\]
with $\langle \cdot, \cdot \rangle$ being the dually between tangent and cotangent vectors, 
 and $\eta\in C^\infty(\R\times [0,L])$ is such that $(\p_{x_1}-i\p_t) \eta=0$. 
Finally,  for any $1$-form $\alpha\in C(M,T^*M)$, we have
\begin{equation}
\label{eq_prop_gaussian_4}
\begin{aligned}
\lim_{h\to 0} h\int_{\{x'_1\}\times M_0} &\langle \alpha,dv_s  \rangle_g\overline{w_s}\psi dV_{g_0}\\
&=\int_0^L i \alpha(\dot{\gamma}(t)) e^{-2\lambda t} \eta(x_1,t)e^{\Phi^{(1)}(x'_1,t)+\overline{\Phi^{(2)} (x'_1,t)}} \psi(\gamma(t)) dt,
\end{aligned}
\end{equation}
and
\begin{equation}
\label{eq_prop_gaussian_5}
\begin{aligned}
\lim_{h\to 0} h\int_{\{x'_1\}\times M_0} &\langle \alpha,d \overline{w_s} \rangle_g v_s\psi dV_{g_0}\\
&=- \int_0^L i \alpha(\dot{\gamma}(t)) e^{-2\lambda t} \eta(x_1,t)e^{\Phi^{(1)}(x'_1,t)+\overline{\Phi^{(2)} (x'_1,t)}} \psi(\gamma(t)) dt.
\end{aligned}
\end{equation}
\end{prop}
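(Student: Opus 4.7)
The plan is to use a Gaussian beam WKB ansatz in Fermi coordinates $(t,y)\in[0,L]\times\R^{n-2}$ along $\gamma$ (so $\gamma$ corresponds to $\{y=0\}$), with amplitudes modulated by an exponential factor $e^{\Phi_\tau^{(j)}}$ that absorbs the regularized magnetic potential $A_\tau^{(j)}$ supplied by Proposition~\ref{prop_approximation_cont}. Concretely, I would take
\begin{equation*}
v_s = e^{is\Theta(t,y)}\bigl(\mu^{\frac{n-2}{4}}|\det\Im H(t)|^{-\frac{1}{4}}\eta_1(x_1,t)\,e^{\Phi_\tau^{(1)}(x_1,t,y)}\chi(y)+s^{-1}a_1+\cdots\bigr),
\end{equation*}
\begin{equation*}
w_s = e^{-is\overline{\widetilde\Theta}(t,y)}\bigl(\mu^{\frac{n-2}{4}}|\det\Im\widetilde H(t)|^{-\frac{1}{4}}\eta_2(x_1,t)\,e^{\Phi_\tau^{(2)}(x_1,t,y)}\chi(y)+s^{-1}b_1+\cdots\bigr),
\end{equation*}
with $\chi$ a cutoff supported near $y=0$ and $\Theta,\widetilde\Theta$ the standard complex Gaussian beam phases for $-\Delta_{g_0}$ on $M_0$, satisfying $\Theta|_{y=0}=t$, $\widetilde\Theta|_{y=0}=-t$, eikonal $|d\Theta|^2_{g_0}=|d\widetilde\Theta|^2_{g_0}=1$ to second order in $y$, and positive definite Hessians $\Im H(t),\Im\widetilde H(t)$ produced by matrix Riccati equations, as in \cite{DKuLS_2016}. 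The sign choice $\widetilde\Theta|_{y=0}=-t$ is essential: it makes $v_s\overline{w_s}|_{y=0}$ carry the factor $e^{ist-i\bar st}=e^{-2\lambda t}$ appearing in \eqref{eq_prop_gaussian_3}, rather than an oscillatory $e^{2i\mu t}$.

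Substituting into \eqref{eq_gauss_3} and its analogue for the adjoint, the $s^2$ coefficients vanish by the eikonal equations. The $s^1$ coefficients, restricted to $y=0$, split into a geometric part that is absorbed by the $|\det\Im H|^{-1/4}$ normalization together with the Cauchy--Riemann conditions $(\p_{x_1}-i\p_t)\eta_1=0$ and $(\p_{x_1}+i\p_t)\eta_2=0$, plus magnetic parts that force
\begin{equation*}
(\p_{x_1}-i\p_t)\Phi_\tau^{(1)}|_{y=0}=-iA_{\tau,1}^{(1)}(x_1,\gamma(t))-A_{\tau,t}^{(1)}(x_1,\gamma(t)),
\end{equation*}
\begin{equation*}
(\p_{x_1}+i\p_t)\Phi_\tau^{(2)}|_{y=0}=-i\overline{A_{\tau,1}^{(2)}(x_1,\gamma(t))}+\overline{A_{\tau,t}^{(2)}(x_1,\gamma(t))},
\end{equation*}
which I solve on $\R\times[0,L]$ by convolving $A_\tau^{(j)}|_{x'=\gamma(t)}$ with the fundamental solutions of $\p_{x_1}\mp i\p_t$. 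The $\Phi_\tau^{(j)}$ are extended off $\{y=0\}$ arbitrarily, say independently of $y$, since only on-geodesic values enter at leading order in $s$. Finally $\bar\eta_2$ satisfies $(\p_{x_1}-i\p_t)\bar\eta_2=0$, so the paper's $\eta$ in \eqref{eq_prop_gaussian_3} corresponds to the product $\eta_1\bar\eta_2$.

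The central obstacle is that $A^{(j)}$ is merely continuous rather than $C^1$, which obstructs the classical smooth Gaussian beam transport construction. Adapting the regularization strategy of Section~\ref{sec_CGO_admissible}, I would work with $A_\tau^{(j)}$ from Proposition~\ref{prop_approximation_cont} and balance $\tau=h^\sigma$ for a suitable $\sigma\in(0,\tfrac{1}{2})$. Estimating the residual $e^{sx_1}h^2L_{g,A^{(1)},q^{(1)}}e^{-sx_1}v_s$ in $H^{-1}_{\text{scl}}$, I would split it as: (i) the pure Gaussian beam residual, which after iterating the transport construction to sufficient order is $\mathcal{O}(h^N)$ for any fixed $N$, plus cutoff errors $\mathcal{O}(e^{-c/h})$; (ii) the magnetic mismatches $2is(A^{(1)}-A_\tau^{(1)})_1v_s$ and $ih^2 d^*\bigl((A^{(1)}-A_\tau^{(1)})v_s\bigr)$, estimated in $H^{-1}_{\text{scl}}$ by duality exactly as in \eqref{eq_Car_17}--\eqref{eq_Car_19_new} to give $\mathcal{O}(h\|A^{(1)}-A_\tau^{(1)}\|_{L^\infty})=h\cdot o(1)$ via \eqref{eq_Car_20_0_cont}; and (iii) smoothness-loss terms such as $h^2\Delta\Phi_\tau^{(1)}\,v_s$ and $h^2\langle A_\tau^{(1)},d\Phi_\tau^{(1)}\rangle v_s$ of sizes $\mathcal{O}(h^2\tau^{-2})$ and $\mathcal{O}(h^2\tau^{-1})$ via \eqref{eq_Car_20_cont}. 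The choice $\tau=h^\sigma$ balances (ii) and (iii) to produce $o(h)$, proving the second bound in \eqref{eq_prop_gaussian_1}. The $H^1_{\text{scl}}$-boundedness of $v_s$ follows from the explicit Gaussian structure together with the $\mu^{(n-2)/4}$ normalization, and $w_s$ is handled identically.

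Finally, the concentration identities \eqref{eq_prop_gaussian_3}--\eqref{eq_prop_gaussian_5} follow from transverse Gaussian integration. Since $|v_s\overline{w_s}|\lesssim\mu^{(n-2)/2}e^{-c\mu|y|^2}$, the rescaling $y=\mu^{-1/2}\tilde y$ reduces the $y$-integral to a fixed Gaussian, whose value exactly cancels the combined $|\det\Im H|^{-1/4}|\det\Im\widetilde H|^{-1/4}$ factors and the $\mu^{(n-2)/2}$ prefactor (choosing the Riccati initial conditions compatibly). The surviving on-geodesic trace equals $e^{-2\lambda t}\eta_1(x_1',t)\overline{\eta_2(x_1',t)}e^{\Phi_\tau^{(1)}(x_1',t)+\overline{\Phi_\tau^{(2)}(x_1',t)}}\psi(\gamma(t))$, which by \eqref{eq_Car_20_0_cont} and $L^\infty$-continuity of $(\p_{x_1}\mp i\p_t)^{-1}$ acting on compactly supported data converges to the integrand of \eqref{eq_prop_gaussian_3} as $\tau=h^\sigma\to 0$. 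For \eqref{eq_prop_gaussian_4}--\eqref{eq_prop_gaussian_5}, the factor $h$ in $h\,dv_s$ cancels the leading $s$ produced by differentiating $e^{is\Theta}$ since $hs\to 1$; evaluating $d\Theta|_{y=0}=dt$ yields the extra factor $i\alpha(\dot\gamma(t))$, while the sign flip in \eqref{eq_prop_gaussian_5} reflects that $d\overline{\widetilde\Theta}|_{y=0}=-dt$ when differentiating $\bar w_s$.
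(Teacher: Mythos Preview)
Your local WKB construction and the regularization strategy are essentially correct and match the paper's approach: regularize $A^{(j)}$ to $A_\tau^{(j)}$, build the amplitude with an exponential factor $e^{\Phi_\tau^{(j)}}$ solving a $\bar\partial$-type equation along the geodesic, choose $\tau=h^\sigma$ with $0<\sigma<1/2$, and estimate the residual in $H^{-1}_{\text{scl}}$ by duality. Two remarks on presentation. First, the paper uses the \emph{same} complex phase $\varphi$ (with $\varphi|_{y=0}=t$, $\Im\varphi\ge c|y|^2$) for both $v_s=e^{is\varphi}a$ and $w_s=e^{is\varphi}b$; this makes $v_s\overline{w_s}$ carry the clean weight $e^{-2\mu\Im\varphi}e^{-2\lambda\Re\varphi}$ and the Gaussian normalization reduces to a single Riccati determinant, avoiding the bookkeeping with two Hessians $H,\widetilde H$. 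Second, your claim that iterating the transport hierarchy gives a residual $\mathcal{O}(h^N)$ is not available here (each iteration costs a derivative of $A_\tau$, hence a factor $\tau^{-1}$), and it is not needed: with only the leading amplitude $a_0$ the eikonal defect $s^2\,\mathcal{O}(|y|^3)$ already contributes $\mathcal{O}(h^{3/2})$ and the transport defect $s\,\mathcal{O}(|y|\tau^{-1})$ contributes $\mathcal{O}(h^{3/2}\tau^{-1})=o(h)$, so no higher-order corrections $s^{-1}a_1,\dots$ are required.

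The genuine gap is that you treat the Fermi coordinates $(t,y)\in[0,L]\times\R^{n-2}$ as global. For a general non-tangential geodesic this fails: $\gamma$ may self-intersect, and then no single Fermi chart covers a tubular neighborhood of $\gamma([0,L])$. The paper handles this by (i) extending $\gamma$ slightly into the double $\hat M_0$, (ii) invoking that $\gamma$ has only finitely many self-intersection times, (iii) covering $\gamma([-\varepsilon,L+\varepsilon])$ by overlapping Fermi charts $U_0,\dots,U_{N+1}$ and building matching local quasimodes $v_s^{(l)}$ that agree on overlaps, and (iv) defining $v_s=\sum_l\chi_l v_s^{(l)}$. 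The nontrivial point is the concentration limits \eqref{eq_prop_gaussian_3}--\eqref{eq_prop_gaussian_5}: near a self-intersection point $p_j$ one has $v_s\overline{w_s}=\sum_l v_s^{(l)}\overline{w_s^{(l)}}+\sum_{l\ne l'}v_s^{(l)}\overline{w_s^{(l')}}$, and the cross terms must be shown to vanish in the limit. This requires a non-stationary phase argument, using that $d(\Re\varphi^{(l)}-\Re\varphi^{(l')})(p_j)\neq 0$ since the geodesic meets itself transversally, together with the $L^2(\partial M_0)$ bound on the beams to control the boundary term produced by integration by parts. Your proposal omits this entire mechanism; without it the argument only covers embedded geodesics, which is strictly weaker than what the proposition asserts.
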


\begin{proof}
We shall follow \cite{DKuLS_2016} and \cite{Cekic} closely, modifying the argument slightly to accommodate the magnetic potential of low regularity.  

Let us isometrically embed our manifold $(M_0,g_0)$ into a larger closed manifold $(\hat{M_0},g_0)$ of the same dimension. This is possible since we can form the manifold $\hat{M_0}=M_0\sqcup_{\p M_0} M_0$, which is the disjoint union of two copies of $M_0$, glued along the boundary.  We extend $\gamma$ as a unit speed geodesic in $\hat{M_0}$. Let $\varepsilon>0$ be such that $\gamma(t)\in 
\hat{M_0}\setminus M_0$ and $\gamma(t)$ has no self-intersection for $t\in [-2\varepsilon, 0)\cup (L,L+2\varepsilon]$. This choice of $\varepsilon$ is possible since $\gamma$ is non-tangential. 

Our goal is to construct Gaussian beam quasimodes near $\gamma([-\varepsilon, L+\varepsilon])$. We shall start by carrying out the quasimode construction locally near a given point $p_0=\gamma (t_0)$ on  $\gamma([-\varepsilon, L+\varepsilon])$. Let $(t,y)\in U=\{ (t,y)\in \R\times \R^{n-2}: |t-t_0|<\delta, |y|<\delta'\}$, $\delta, \delta'>0$,  be Fermi coordinates near $p_0$, see \cite{Kenig_Salo_APDE_2013}. We may assume that the coordinates $(t, y)$ extends smoothly to a neighborhood of $\overline{U}$. 
The geodesic $\gamma$ near $p_0$ is then given by $\Gamma=\{(t,y): y=0\}$, and 
\[
g_0^{jk}(t,0)=\delta^{jk},\quad \p_{y_l} g_0^{jk}(t, 0)=0. 
\]
Hence, near the geodesic 
\begin{equation}
\label{eq_gauss_3_metric}
g_0^{jk}(t,y)=\delta^{jk}+\mathcal{O}(|y|^2). 
\end{equation}

We shall first construct the quasimode $v=v_s$ in \eqref{eq_prop_gaussian_1} for the operator $e^{sx_1}h^2 L_{g,A^{(1)},q^{(1)}}e^{-sx_1}$. In doing so, let us write for simplicity $A=A^{(1)}$ and $q=q^{(1)}$.  
Let us consider the following Gaussian beam ansatz, 
\begin{equation}
\label{eq_gauss_4_v_s}
v(x_1, t,y; s)=e^{i s\varphi(t,y)} a(x_1, t,y; s).
\end{equation}
Here  $\varphi\in C^\infty (U,\C)$ is  such that 
\begin{equation}
\label{eq_gauss_4_v_s_phase}
\Im \varphi\ge 0,\quad \Im \varphi|_{\Gamma}=0,  \quad \Im \varphi(t,y)\sim |y|^2= \text{dist} ((y,t), \Gamma)^2,
\end{equation}
and $a \in C^\infty(\R\times U, \C)$ is an amplitute  such that $\text{supp}(a(x_1,\cdot))$ is close to  $\Gamma$, see \cite{Ralston_1982}, 
\cite{KKL_book}.  
Notice that here we choose  $\varphi$ to depend on the transversal variables $(t,y)$ only while $a$ is a function of all the variables. 

As  $\varphi$ is independent of $x_1$,   we get  
\begin{equation}
\label{eq_gauss_4}
e^{-is\varphi}  (-\Delta_g) e^{is\varphi} a =-\Delta_g a - is [2 \langle d\varphi, d a(x_1,\cdot)\rangle_{g_0} +(\Delta_{g_0} \varphi) a] +s^2 \langle d\varphi,d\varphi\rangle_{g_0}a,
\end{equation}
and  
\begin{equation}
\label{eq_gauss_5}
\begin{aligned}
&i e^{-i s\varphi } d^* (A e^{i s\varphi }a )=i d^*(Aa)+s a \langle d\varphi, A(x_1,\cdot)\rangle_{g_0}, \\
&- i e^{-i s\varphi }   \langle A, d (e^{i s\varphi }  a)\rangle_g= -i \langle A,da \rangle_g+s a \langle d\varphi, A(x_1,\cdot)\rangle_{g_0}, 
\end{aligned}
\end{equation}
Using \eqref{eq_gauss_3}, \eqref{eq_gauss_4},  \eqref{eq_gauss_5}, and the fact    that $\varphi$ is independent of $x_1$, we obtain that 
\begin{equation}
\label{eq_gauss_6}
\begin{aligned}
e^{sx_1}&L_{g,A,q} e^{-sx_1}v=  e^{is\varphi} [e^{-is\varphi}  e^{sx_1}L_{g,A,q} e^{-sx_1} e^{is\varphi} a]\\
&=e^{is\varphi} \bigg[  s^2 \big( \langle d\varphi,d\varphi \rangle_{g_0}-1\big)a \\
&+ s\big( 2\p_{x_1}a -2i \langle d\varphi, d a(x_1,\cdot) \rangle_{g_0}  -i  (\Delta_{g_0} \varphi) a + 2 a \langle d\varphi, A(x_1,\cdot) \rangle_{g_0} +2 i A_1 a \big)\\
 &-\Delta_g a +  i d^*(Aa ) -i \langle A,da \rangle_g +(\langle A, A \rangle_g+q)a\bigg].
\end{aligned}
\end{equation}

We start by considering the eikonal equation, 
\[
 \langle d\varphi,d\varphi \rangle_{g_0}-1=0,
\] 
and proceeding as in the classical Gaussian beam construction, see \cite{Ralston_1977},  \cite{Ralston_1982}, 
\cite{KKL_book},  \cite{DKuLS_2016}, we find $\varphi=\varphi(t,y)\in C^\infty(U,\C)$ such that 
\begin{equation}
\label{eq_gauss_7}
\langle d\varphi,d\varphi \rangle_{g_0}-1=\mathcal{O}(|y|^3),\quad y\to 0,
\end{equation}
and 
\begin{equation}
\label{eq_gauss_8}
\Im \varphi\ge c|y|^2,
\end{equation}
with some $c>0$.  Specifically, as explained in \cite{Ralston_1977}, \cite{Ralston_1982} and \cite{DKuLS_2016}, we can choose 
\begin{equation}
\label{eq_gauss_9}
\varphi(t,y)=t+\frac{1}{2} H(t) y\cdot y,
\end{equation}
where $H(t)$ is a unique smooth complex symmetric solution of the initial value problem for the matrix Riccati equation, 
\begin{equation}
\label{eq_Riccati}
\dot{H}(t)+H(t)^2=F(t), \quad H(t_0)=H_0,
\end{equation}
with $H_0$ being a complex symmetric matrix with $\Im (H_0)$ positive definite and $F(t)$ being a suitable symmetric matrix. Hence, as explained in  \cite{Ralston_1977}, \cite{Ralston_1982} and \cite{DKuLS_2016},  $\Im (H(t))$ is  positive definite for all $t$.

We shall next look for the  amplitude $a$ in the form
\begin{equation}
\label{eq_gauss_11}
a(x_1,t,y;\tau)=\mu^{\frac{n-2}{4}}a_0(x_1,t;\tau) \chi\bigg(\frac{y}{\delta'}\bigg),
\end{equation}
where $a_0(\cdot, \cdot;\tau) \in C^\infty(\R\times  \{t: |t-t_0|<\delta\} )$ is independent of $y$ and 
 satisfies
\begin{equation}
\label{eq_gauss_10}
\begin{aligned}
2\p_{x_1}a_0 -2i  \langle d\varphi, d a_0(x_1,\cdot) \rangle_{g_0}  -i  (\Delta_{g_0} \varphi) a_0 + 2 a_0 \langle d\varphi, A_\tau(x_1,\cdot) \rangle_{g_0} +2 i (A_\tau)_1 a_0\\
=\mathcal{O}(|y| \tau^{-1}), 
\end{aligned}
\end{equation} 
as $y\to 0$ and $\tau\to 0$. Here  $A_\tau$ is the regularization of $A$ given by Proposition \ref{prop_approximation_cont}, and $\chi\in C^\infty_0(\R^{n-2})$ is such that  $\chi=1$ for $|y|\le 1/4$ and $\chi=0$ for $|y|\ge 1/2$.

In order to determine $a_0$ such that \eqref{eq_gauss_10} holds, we shall Taylor expand the coefficients occurring in the left hand side of \eqref{eq_gauss_10}. First writing 
\begin{align*}
A_\tau(x,t,y)&=A_\tau(x,t,0)+\int_0^1\frac{d}{ds} A_\tau(x,t,ys)ds\\
&= A_\tau(x,t,0)+\int_0^1 ( \nabla_y A_\tau(x,t,ys))y ds,
\end{align*}
and using  Proposition \ref{prop_approximation_cont},
we obtain that  
\[
A_\tau(x,t,y)=A_\tau(x,t,0) +\mathcal{O}(|y|\tau^{-1} ).
\]
Next   it follows from \eqref{eq_gauss_9} that 
\[
\p_t \varphi(t,y)=1+\mathcal{O}(|y|^2).
\]
We finally have to compute $\Delta_{g_0}\varphi$ along the geodesic.  We have 
\[
(\Delta_{g_0}\varphi)(t,y)= (\Delta_{g_0}\varphi)(t,0)+\mathcal{O}(|y|),
\]
and  using \eqref{eq_gauss_3_metric} and  \eqref{eq_gauss_9},  we get 
\begin{align*}
(\Delta_{g_0}\varphi)(t,0)&=|g_0|^{-1/2}\p_{x_j}(|g_0|^{1/2} g_0^{jk}\p_{x_k}\varphi)|_{y=0} =\delta^{jk}\p_{x_j}\p_{x_k}\varphi|_{y=0}\\
&=\delta^{jk}H_{jk}=\tr H(t), \quad x=(t,y).
\end{align*}
To achieve \eqref{eq_gauss_10}, we shall therefore  require that $a_0(x_1,t; \tau)$ solves
\begin{equation}
\label{eq_gauss_10_specific}
(\p_{x_1} -i \p_t) a_0=\frac{1}{2}\big( -2i (A_\tau)_1(x_1,t,0)-2 (A_\tau)_t(x_1,t,0) +i\tr H(t)\big) a_0,
\end{equation}
where $A=A_1 dx_1+ A_t dt + A_ydy$. Writing 
\[
\p=\frac{1}{2} (\p_{x_1}-i\p_{t}),
\]
and looking for a solution in the form $a_0(x_1,t;\tau)=e^{\Phi_\tau (x_1,t)+f(t)}\eta(x_1,t)$, where $\p\eta=0$, we get 
\begin{equation}
\label{eq_gauss_12_eq_phi_tau}
\p \Phi_\tau(x_1,t) =-\frac{1}{2}(i (A_\tau)_1(x_1,t,0)+ (A_\tau)_t(x_1,t,0))\in C^\infty(\R_{x_1}\times [t_0- \delta, t_0+\delta]),
\end{equation}
with compact support in $x_1$,
and 
\begin{equation}
\label{eq_gauss_12_eq_f}
\p_t f=-\frac{1}{2} \tr H(t). 
\end{equation}
We solve \eqref{eq_gauss_12_eq_phi_tau} by taking 
\[
\Phi_\tau (x_1,t)=-\frac{1}{2\pi (x_1-it)}*(i (A_\tau)_1(x_1,t,0)+ (A_\tau)_t(x_1,t,0)), 
\]
using the standard fundamental solution of the  operator $\p$.  When forming the convolution in the variables $(x_1,t)$, we take a $C^\infty$ compactly supported extension of the right hand side of \eqref{eq_gauss_12_eq_phi_tau} to all of the $(x_1,t)$--plane so that the estimates of Proposition \ref{prop_approximation_cont} are still valid for the extension. 

We obtain the solution $a_0(x_1,t; \tau)\in C^\infty(\R\times [t_0-\delta, t_0+\delta])$ of \eqref{eq_gauss_10_specific} such that 
\begin{equation}
\label{eq_gauss_12}
\|\nabla_{x_1,t}^\alpha a_0\|_{L^\infty(J\times [t_0-\delta,t_0+\delta] )}=\mathcal{O}(\tau^{-|\alpha|}),\quad |\alpha|\le 2, \quad  \tau\to 0, 
\end{equation}
where $J\subset \R$ is a large fixed bounded open interval. 
Furthermore, we have 
\[
\|\Phi_\tau-\Phi\|_{L^\infty(J\times [t_0-\delta,t_0+\delta] ) }=o(1), \quad \tau\to 0,
\]
where $\Phi$ is continuous and solves
\[
\p \Phi(x_1, t)  =-\frac{1}{2}(i A_1(x_1,t,0)+ A_t(x_1,t,0)). 
\]
It follows from \eqref{eq_gauss_10_specific}  and \eqref{eq_gauss_12} that \eqref{eq_gauss_10} holds. 

In view of  \eqref{eq_gauss_4_v_s} and \eqref{eq_gauss_11} we write
\begin{equation}
\label{eq_gauss_13_v_s_def}
v(x_1,t,y)=e^{i s\varphi} \mu^{\frac{n-2}{4}}a_0(x_1,t; \tau) \chi\bigg(\frac{y}{\delta'}\bigg).
\end{equation}
We shall next check that \eqref{eq_prop_gaussian_1} is valid locally near the point $p_0$ for a suitable choice of $\tau$ depending on $s$. 
First using \eqref{eq_gauss_4_v_s_phase}, we see that 
\begin{equation}
\label{eq_gauss_13_v_s_est}
|v(x_1,t,y)|\le \mathcal{O}(1)  \mu^{\frac{n-2}{4}}  e^{-\mu c|y|^2} \chi\bigg(\frac{y}{\delta'}\bigg), \quad c>0.
\end{equation}
Then we have
\begin{equation}
\label{eq_gauss_v}
\|v\|_{L^2(J\times U)}\le \mathcal{O}(1) \|  \mu^{\frac{n-2}{4}} e^{-\mu c|y|^2}  \|_{L^2(|y|\le \delta'/2)}=\mathcal{O}(1),\quad \mu\to \infty. 
\end{equation}

Let us now estimate $\|e^{sx_1}h^2L_{g,A,q} e^{-sx_1}v\|_{H^{-1}_{\text{scl}}(J\times U)}$, $\mu=\frac{1}{h}$. Let us start with the first term in the right hand side of  \eqref{eq_gauss_6}.
Using that $\|a_0\|_{L^\infty}=\mathcal{O}(1)$,  it follows from \eqref{eq_gauss_7} and \eqref{eq_gauss_8} that 
\begin{equation}
\label{eq_gauss_13}
\begin{aligned}
h^2\|  e^{is\varphi}   s^2 \big(\langle d\varphi,d\varphi\rangle_{g_0}-1\big)a  \|_{L^2(J\times U)}\le \mathcal{O}(1)
\| e^{-c\mu |y|^2}  |y|^3 \mu^{\frac{n-2}{4}} \|_{L^2(|y|\le \delta'/2)}  \\
\le \mathcal{O}(1) \bigg( \int  e^{-2c|z|^2} \mu^{-3} |z|^{6} dz\bigg)^{1/2}=\mathcal{O}(\mu^{-3/2})=o(h). 
\end{aligned}
\end{equation}
 Here we make the change of variables $z=\mu^{1/2}y$. 

Let us now turn to the second term in the right hand side of \eqref{eq_gauss_6}. Consider first the contribution to the second term obtained when $A$ is replaced by its regularization $A_\tau$. Using \eqref{eq_gauss_11}, \eqref{eq_gauss_10}, and the fact that on $\supp d\chi(y/\delta')$, 
\[
|e^{is\varphi}|\sim e^{-\mu \tilde c},\quad \tilde c>0,
\]
we get 
\begin{equation}
\label{eq_gauss_14}
\begin{aligned}
h^2\|  e^{is\varphi}  & s\big( 2\p_{x_1}a -2i \langle d\varphi, d a \rangle_{g_0}  -i  (\Delta_{g_0} \varphi) a + 2 a \langle d\varphi, A_\tau  \rangle_{g_0} +2 i (A_\tau)_1 a \big) \|_{L^2(J\times U)}\\
&\le \mathcal{O}(h) \mu^{\frac{n-2}{4}} 
\bigg\|  e^{is\varphi}   \bigg[ |y|\tau^{-1} \chi\bigg(\frac{y}{\delta'}\bigg) -2i  \langle d\varphi, d \chi\bigg(\frac{y}{\delta'}\bigg) \rangle_{g_0} \bigg] \bigg\|_{L^2(J\times U)}\\
&\le \mathcal{O}(h) \mu^{\frac{n-2}{4}} 
\bigg(\int_{|y|\le \delta'/2} e^{-2c \mu |y|^2}  |y|^2 \tau^{-2} dy \bigg)^{1/2}+\mathcal{O}(e^{-\mu \tilde c})\\
&=\mathcal{O}\bigg(\frac{h \mu^{-1/2}}{\tau}\bigg) +\mathcal{O}(e^{-\mu \tilde c})=o(h),
\end{aligned}
\end{equation}
if we choose $\tau=h^{\sigma}$ with some $0<\sigma<1/2$.  To estimate the rest of the second term in the right hand side of \eqref{eq_gauss_6}, using \eqref{eq_Car_20_0_cont}, we obtain that 
\begin{equation}
\label{eq_gauss_15}
\begin{aligned}
h^2\|  e^{is\varphi} s[2a \langle d\varphi, A-A_\tau\rangle_{g_0} &+2i (A_1-(A_\tau)_1)a] \|_{L^2(J\times U)}\\
&\le \mathcal{O}(h) \|A-A_\tau\|_{L^\infty} \|e^{-\mu c |y|^2}  \mu^{\frac{n-2}{4}} \|_{L^2(|y|\le \delta'/2)} =o(h).
\end{aligned}
\end{equation}

Let us now start estimating the third term in the right hand side of \eqref{eq_gauss_6}. First using \eqref{eq_gauss_11} and 
\eqref{eq_gauss_12}, we get 
\begin{equation}
\label{eq_gauss_16}
\begin{aligned}
&h^2 \|e^{is\varphi} (-\Delta_g a)\|_{L^2(J\times U)}\le h^2\bigg\| e^{is\varphi}\mu^{\frac{n-2}{4}} (\Delta_g a_0(x_1,t))\chi\bigg(\frac{y}{\delta'}\bigg)\bigg\|_{L^2(J\times U)}\\
&+h^2\bigg\| e^{is\varphi}\mu^{\frac{n-2}{4}} 
\bigg[2 \langle \nabla_g a_0(x_1,t), \nabla_g \chi\bigg(\frac{y}{\delta'}\bigg)\rangle_g+ a_0 \Delta_g \chi\bigg(\frac{y}{\delta'}\bigg) \bigg]\bigg\|_{L^2(J\times U)}\\
&\le \mathcal{O}(h^2) 
\| e^{-c\mu |y|^2}\mu^{\frac{n-2}{4}}\tau^{-2}\|_{L^2(|y|\le \delta'/2)}+ \mathcal{O}(e^{-\mu \tilde c})=\mathcal{O}(h^2\tau^{-2})+ \mathcal{O}(e^{-\mu \tilde c})=o(h),
\end{aligned}
\end{equation}
and
\begin{equation}
\label{eq_gauss_17}
\begin{aligned}
h^2 \| e^{is\varphi} & \langle A,da\rangle_g \|_{L^2(J\times U)}\le \mathcal{O}(h^2)\mu^{\frac{(n-2)}{4}}\| e^{is\varphi}  \langle A,da_0\rangle_g    \|_{L^2(J\times U)} + \mathcal{O}(e^{-\mu \tilde c})\\
&\le \mathcal{O}\bigg(\frac{h^2}{\tau}\bigg)\mu^{\frac{n-2}{4}}
\| e^{-c\mu |y|^2}\|_{L^2(|y|\le \delta'/2)}+ \mathcal{O}(e^{-\mu \tilde c})=o(h),
\end{aligned}
\end{equation}
and 
\begin{equation}
\label{eq_gauss_18}
\begin{aligned}
h^2\|e^{is\varphi} (\langle A, A\rangle_g+q)a \|_{L^2(J\times U)} \le \mathcal{O}(h^2)\mu^{\frac{n-2}{4}}
\| e^{-c\mu |y|^2}\|_{L^2(|y|\le \delta'/2)}=\mathcal{O}(h^2). 
\end{aligned}
\end{equation}
Finally, let us estimate $h^2 \|e^{is\varphi} i d^*(Aa ) \|_{H^{-1}_{\text{scl}}(J\times U)}$. To that end, letting $0\ne \psi\in C^\infty_0(I\times U)$, we obtain that  
\begin{align*}
h^2 | \langle e^{is\varphi} d^*(Aa ),\psi\rangle | &\le  h^2 | \langle e^{is\varphi} d^*(A_\tau a ),\psi\rangle|
+ h^2\int |\langle (A-A_\tau) a , d(e^{is\varphi}\psi) \rangle_g |dV_g\\
&\le (h^2\mathcal{O}(\tau^{-1}) \|e^{is\varphi} \mu^{\frac{n-2}{4}}\|_{L^2 (|y|\le \delta'/2)}+ \mathcal{O}(e^{-\mu\tilde c}))\|\psi\|_{L^2}\\
&+(o(h^2)\mu + o(h)) \|e^{is\varphi} \mu^{\frac{n-2}{4}}\|_{L^2 (|y|\le \delta'/2)}\|\psi\|_{H^1_{\text{scl}}}\\
&=o(h) \|\psi\|_{H^1_{\text{scl}}},
\end{align*}
and therefore, 
\begin{equation}
\label{eq_gauss_19}
h^2 \|e^{is\varphi} i d^*(Aa ) \|_{H^{-1}_{\text{scl}}(J\times U)}=o(h), \quad h\to 0. 
\end{equation}

Thus, we conclude from \eqref{eq_gauss_6} with the help of \eqref{eq_gauss_13},  \eqref{eq_gauss_14},  \eqref{eq_gauss_15},  \eqref{eq_gauss_16},  \eqref{eq_gauss_17},  \eqref{eq_gauss_18} and  \eqref{eq_gauss_19}, that 
\begin{equation}
\label{eq_gauss_conj_H_-1}
\|e^{sx_1}h^2L_{g,A,q} e^{-sx_1}v\|_{H^{-1}_{\text{scl}}(J\times U)}=o(h), \quad h\to 0. 
\end{equation}
 
 Finally, using \eqref{eq_gauss_13_v_s_def}, we also get 
\begin{equation}
\label{eq_gauss_dv}
\|d v\|_{L^2(J\times U)}=\mathcal{O}(h^{-1}),\quad h\to 0. 
\end{equation}
This complete the verification of  \eqref{eq_prop_gaussian_1}  locally near the point $p_0$.

For the later purposes we shall need an  estimate for $\|v(x_1,\cdot)\|_{L^2(\p M_0)}$. If $U$ contains a boundary point $x_0=(t_0,0)\in \p M_0$, then $\p_t |_{x_0}$ is transversal to $\p M_0$. Let $\rho$ be a boundary defining function for $M_0$ so that $\p M_0$ is given by the zero set $\rho(t,y)=0$ near $x_0$. Then $\nabla \rho(x_0)$  is normal to $\p M_0$, and hence, $\p_t \rho(x_0)\ne 0$. By the implicit function theorem, there is a smooth function $y\mapsto t(y)$ near $0$ such that $\p M_0$ near $x_0$ is given by $\{ (t(y), y): |y|<r_0\}$ for some $r_0>0$ small,  see also \cite{Kenig_Salo_APDE_2013}. Then using \eqref{eq_gauss_13_v_s_def}, \eqref{eq_gauss_13_v_s_est}, we get
\begin{equation}
\label{eq_v_s_on_boundary}
\begin{aligned}
\|v(x_1,\cdot)\|_{L^2(\p M_0\cap U)}^2&=\int_{|y|<r_0} | v(x_1,t(y),y)|^2 dS(y)\\
&\le \mathcal{O}(1) \int_{\R^{n-2}} \mu^{\frac{n-2}{2}}e^{-2\mu c |y|^2}dy=\mathcal{O}(1),   
\end{aligned}
\end{equation}
as $\mu\to \infty$.

We shall now construct the quasimode $v_s$ in $M$ by gluing together quasimodes defined along small pieces of the geodesic.  
Since $\hat M_0$ is a compact manifold and $\gamma:(-2\varepsilon, L+2\varepsilon)\to \hat M_0$ is a unit speed non-tangential geodesic with no loops, it follows from \cite[Lemma 7.2]{Kenig_Salo_APDE_2013} that $\gamma|_{[-\varepsilon,L+\varepsilon]}$ self-intersects only at finitely many times $t_j$ with 
\[
-\varepsilon=t_0<t_1<\dots <t_N<t_{N+1}=L+\varepsilon.
\]
Then an application of  \cite[Lemma 3.5]{DKuLS_2016} shows that there exists an open cover $\{(U_j,\kappa_j)\}_{j=0}^{N+1}$ of $\gamma([-\varepsilon,L+\varepsilon])$ consisting of coordinate neighborhoods having the following properties: 
\begin{itemize}
\item[(i)] $\kappa_j(U_j)=I_j\times B$, where $I_j$ are open intervals and $B=B(0,\delta')$ is an open ball in $\R^{n-2}$.  Here $\delta'>0$  can be taken arbitrarily small and the same for each $U_j$, 
\item[(ii)] $\kappa_j(\gamma(t))=(t,0)$ for each $t\in I_j$,
\item[(iii)] $t_j$ only belongs to $I_j$ and $\overline{I_j}\cap \overline{I_k}=\emptyset$ unless $|j-k|\le 1$,
\item[(iv)] $\kappa_j=\kappa_k$ on $\kappa_j^{-1}((I_j\cap I_k)\times B)$.
\end{itemize}

As explained in \cite[Lemma 3.5]{DKuLS_2016}, the intervals $I_j$ can be chosen as follows,
\begin{align*}
I_0=(-2\varepsilon, t_1-\tilde \delta), \quad I_j&=(t_j-2\tilde \delta, t_{j+1}-\tilde \delta), \quad j=1,\dots, N, \\
 I_{N+1}&=(t_{N+1}-2\tilde\delta, L+2\varepsilon ),
\end{align*}
for some $\tilde \delta>0$ small enough. Furthermore, the metric $g_0$ expressed in these coordinates satisfies,  
\[
g_0^{jk}|_{\gamma(t)}=\delta^{jk},\quad \nabla g_0^{jk}|_{\gamma(t)}=0,
\]
see \cite[Lemma 3.5]{DKuLS_2016}.  As observed in the proof of \cite[Lemma 3.5]{DKuLS_2016}, in the case when $\gamma$ does not self-intersect, there is a single coordinate neighborhood of  $\gamma|_{[-\varepsilon,L+\varepsilon]}$  so that (i) and (ii) are satisfied. 

To construct the quasimode $v_s$ we proceed as follows. First we find a function $v_s^{(0)}=e^{is\varphi^{(0)}}a^{(0)}$, $a_0^{(0)}=e^{\Phi_\tau^{(0)}+f^{(0)}}$, in $U_0$ with some fixed initial  conditions at $t=-\varepsilon$ for the Riccati equation \eqref{eq_Riccati} determining $\varphi^{(0)}$ and for the equation \eqref{eq_gauss_12_eq_f} determining $f^{(0)}$.  
Choose some $t_0'$ with $\gamma(t_0')\in U_0\cap U_1$, and let $v_s^{(1)}=e^{is\varphi^{(1)}}a^{(1)}$ be the quasimode in $U_1$ obtained by demanding that  
\[
\varphi^{(1)}(t_0')=\varphi^{(0)}(t_0'), \quad f^{(1)}(t_0')=f^{(0)}(t_0'). 
\]
Also notice that $\Phi^{(0)}_\tau$ and $\Phi^{(1)}_\tau$ both satisfy the equation   
\eqref{eq_gauss_12_eq_phi_tau} and we can arrange so that $\Phi^{(0)}_\tau=\Phi^{(1)}_\tau$ on $U_0\cap U_1$. Continuing in this way we obtain  the quasimodes $v^{(2)}_s,\dots, v_s^{(N+1)}$ such that 
\begin{equation}
\label{eq_equal_quasi}
v_s^{(j)}(x_1,\cdot)=v_s^{(j+1)}(x_1,\cdot)\quad \text{in}\quad  U_j\cap U_{j+1},
\end{equation}
for all $x_1$. 

 Let $\chi_j=\chi_j(t)\in C^\infty_0(I_j)$ be such that $\sum_{j=0}^{N+1} \chi_j=1$ near $\gamma([-\varepsilon,L+\varepsilon])$, and define
 \[
 v_s=\sum_{j=0}^{N+1} \chi_j v_s^{(j)}. 
 \]

Let $p_1,\dots,p_R\in M_0$ be the distinct points where the geodesic self-intersects, and let $0\le t_1<\dots<t_{R'}$ be the times of self-intersections. Let $V_1,\dots, V_R$ be small neighborhoods in $\hat M_0$ around $p_j$, $j=1,\dots, R$. Then choosing $\delta'$ small enough we obtain an open cover in $\hat M_0$, 
\begin{equation}
\label{eq_open_cover_sup}
\supp(v_s(x_1,\cdot ))\cap M_0\subset (\cup_{j=1}^R V_j)\cup (\cup_{k=1}^S W_k),
\end{equation}
where in each $V_j$, the quasimode is a finite sum,
\begin{equation}
\label{eq_rep_vs_1}
v_s(x_1,\cdot)|_{V_j}=\sum_{l: \gamma(t_l)=p_j} v_s^{(l)}(x_1,\cdot),
\end{equation}
and in each $W_k$, in view of \eqref{eq_equal_quasi}, there is some $l(k)$ so that the quasimode is given by 
\begin{equation}
\label{eq_rep_vs_2}
v_s(x_1,\cdot)|_{W_k}=v_s^{l(k)}(x_1,\cdot).
\end{equation}
Hence, the $L^2$ bounds $\|v_s\|_{L^2(M)}=\mathcal{O}(1)$ and $\|dv_s\|_{L^2(M)}=\mathcal{O}(h^{-1})$ follow from \eqref{eq_gauss_v} and
\eqref{eq_gauss_dv}.  Furthermore, \eqref{eq_v_s_on_boundary} implies the bound $\|v_s(x_1,\cdot)\|_{L^2(\p M_0)}=\mathcal{O}(1)$. 

We shall show that  $\| e^{sx_1}h^2L_{g,A,q} e^{-sx_1}v_s\|_{H^{-1}_{\text{scl}}(M^0)}=o(h)$. In doing so,  we observe that 
\eqref{eq_open_cover_sup} gives that 
\[
\supp(v_s)\cap M\subset (\cup_{j=1}^R  \tilde J\times V_j)\cup (\cup_{k=1}^S \tilde J\times W_k):=\cup\Omega_l,
\]
where $\tilde J\subset\R$ is a bounded open interval.  It follows from  \eqref{eq_rep_vs_1}, \eqref{eq_rep_vs_2} and \eqref{eq_gauss_conj_H_-1}
that  
\begin{equation}
\label{eq_oper_H_-1_sup}
\begin{aligned}
&\| e^{sx_1}h^2L_{g,A,q} e^{-sx_1}v_s\|_{H^{-1}_{\text{scl}}(  \tilde J\times V_j)}=o(h),  \\
&\| e^{sx_1}h^2L_{g,A,q} e^{-sx_1}v_s\|_{H^{-1}_{\text{scl}}(  \tilde J\times W_k)}=o(h),  \quad h\to 0. 
\end{aligned}
\end{equation}
Let $\psi\in C^\infty_0(M^0)$ and let $0\le \chi_l\in C^\infty_0(\Omega_l)$ be such that $\sum\chi_l=1$ near $\supp(v_s)\cap M$. Writing 
\[
\psi=(1-\sum\chi_l)\psi+\sum\chi_l\psi,
\]
and using \eqref{eq_oper_H_-1_sup}, we get 
\[
|\langle e^{sx_1}h^2L_{g,A,q} e^{-sx_1}v_s, \psi \rangle_{M^0}  |\le o(h)\|\psi\|_{H^1_{\text{scl}}(M^0)},
\]
showing the claim. This completes the proof of \eqref{eq_prop_gaussian_1}.

Now look for a Gaussian beam quasimode for the operator $e^{-sx_1}h^2L_{g,\overline{A^{(2)}},\overline{q^{(2)}}} e^{sx_1}$ in the form
\[
w_s=w=e^{i s\varphi} b,
\]
with the same phase function $\varphi\in C^\infty(U)$ satisfying \eqref{eq_gauss_4_v_s_phase}, and $b\in C^{\infty}(\R\times U)$ supported near $\Gamma$.  Using \eqref{eq_gauss_3} with $s$ replaced by $-s$, \eqref{eq_gauss_4},  \eqref{eq_gauss_5}, and   that $\varphi$ is independent of $x_1$, we obtain, similarly to \eqref{eq_gauss_6}  that 
\begin{equation}
\label{eq_gauss_6_second_eq}
\begin{aligned}
&e^{-sx_1}  L_{g,\overline{A^{(2)}},\overline{q^{(2)}}} e^{sx_1}w=  e^{is\varphi} [e^{-is\varphi}  e^{-sx_1}L_{g,\overline{A^{(2)}},\overline{q^{(2)}}} e^{sx_1} e^{is\varphi} b]\\
&=e^{is\varphi} \bigg[  s^2 \big(\langle d\varphi,d\varphi \rangle_{g_0}-1\big)b  \\
&+ s\big( - 2\p_{x_1}b -2i \langle d\varphi, d b \rangle_{g_0}  -i  (\Delta_{g_0} \varphi) b + 2 b \langle d\varphi, \overline{A^{(2)}}\rangle _{g_0} -2 i \overline{A^{(2)}_1} b \big)\\
 &-\Delta_g b +  i d^*(\overline{A^{(2)}}b ) -i \langle \overline{A^{(2)}},db\rangle_g +(\langle \overline{A^{(2)}}, \overline{A^{(2)}}\rangle_g+\overline{q^{(2)}})b\bigg].
\end{aligned}
\end{equation}

We shall next find the amplitude $b$ in the form
\begin{equation}
\label{eq_gauss_11_second_eq}
b(x_1,t,y;\tau)=\mu^{\frac{n-2}{4}}b_0(x_1,t;\tau) \chi(\frac{y}{\delta'}),
\end{equation}
where $b_0(\cdot, \cdot;\tau) \in C^\infty(\R\times\{t:|t-t_0|<\delta\})$. To that end, similarly to \eqref{eq_gauss_10_specific}, we require that $b_0$ solves
\[
(\p_{x_1} +i \p_t) b_0=\frac{1}{2}\big( -2i (\overline{A^{(2)}_\tau)_1}(x_1,t,0)+2 \overline{(A^{(2)}_\tau)_t}(x_1,t,0) -i\tr H(t)\bigg) b_0.
\]
Writing 
\[
\overline{\p}=\frac{1}{2} (\p_{x_1}+i\p_{t}),
\]
and looking for a solution in the form $b_0=e^{\Phi^{(2)}_\tau (x_1,t)+f^{(2)}(t)}$, we get 
\begin{equation}
\label{eq_gauss_12_eq_phi_tau__second_eq}
\overline{\p} \Phi^{(2)}_\tau =\frac{1}{2}\big( -i (\overline{A^{(2)}_\tau)_1}(x_1,t,0)+ \overline{(A^{(2)}_\tau)_t}(x_1,t,0)) ,
\end{equation}
and 
\[
\p_t f^{(2)}=-\frac{1}{2} \tr H(t). 
\]
Proceeding further as in the construction of the quasimode $v_s$ above, we obtain the quasimode $w_s\in C^\infty(M)$ such that \eqref{eq_prop_gaussian_2}  holds.

Let us now verify \eqref{eq_prop_gaussian_3} for $\psi\in C(M_0)$ and $x'_1\in \R$. Using a partition of unity, it is enough to check \eqref{eq_prop_gaussian_3} for $\psi$ having compact support in one of the sets $V_j$ or   $W_k$, see \eqref{eq_open_cover_sup}.  Let us first consider the easier case when $\psi\in C_0 ( M_0)$, $\supp(\psi)\subset W_k$.  Here on $\supp(\psi)$, we have 
\begin{equation}
\label{eq_gauss_25_v_s_w_s}
v_s=e^{is\varphi} \mu^{\frac{n-2}{4}}a_0 (x'_1,t; \mu)\chi\bigg(\frac{y}{\delta'}\bigg),\quad w_s=e^{is\varphi} \mu^{\frac{n-2}{4}}b_0 (x'_1,t;\mu)\chi\bigg(\frac{y}{\delta'}\bigg),
\end{equation}
with $\varphi=\varphi(t,y)$. First it follows from \eqref{eq_gauss_3_metric} that 
\[
|g_0|^{1/2}=1+\mathcal{O}(|y|^2).
\]
Using \eqref{eq_gauss_9}, we get 
\begin{equation}
\label{eq_gauss_26}
\begin{aligned}
& \int_{\{x'_1\}\times M_0} v_s \overline{w_s} \psi dV_{g_0}\\
 &= \int_{0}^L \int_{\R^{n-2}} e^{-2\mu\text{Im}\varphi} e^{-2\lambda\text{Re}\varphi} \mu^{\frac{n-2}{2}} a_0(x'_1,t;\mu)\overline{b_0(x'_1,t;\mu)} \chi^2\bigg(\frac{y}{\delta'}\bigg) \psi(t,y) |g_0|^{\frac{1}{2}} dt dy\\
 &=\int_0^L \int_{\R^{n-2}} e^{-\mu\text{Im} H(t)y\cdot y}  e^{-2\lambda t} e^{\lambda \mathcal{O}(|y|^2)} \mu^{\frac{n-2}{2}} a_0(x'_1,t;\mu)\overline{b_0(x'_1,t;\mu)}\\ 
 &\chi^2\bigg(\frac{y}{\delta'}\bigg) \psi(t,y)
 (1+\mathcal{O}(|y|^2)) dt dy.
\end{aligned}
\end{equation}
Performing the change of variables $\mu^{1/2} y=x$ in \eqref{eq_gauss_26}, we obtain that  
\begin{equation}
\label{eq_gauss_27}
\begin{aligned}
\int_0^L \int_{\R^{n-2}} & e^{-\text{Im} H(t) x\cdot x}  e^{-2\lambda t} e^{\frac{\lambda}{\mu} \mathcal{O}(|x|^2)} \\
&a_0(x'_1,t;\mu)\overline{b_0(x'_1,t;\mu)}\chi^2\bigg(\frac{x}{\mu^{1/2}\delta'}\bigg) \psi\bigg(t,\frac{x}{\mu^{1/2}}\bigg)
 (1+\mu^{-1}\mathcal{O}(|x|^2)) dt dx.
\end{aligned}
\end{equation}
Recall that  $\text{Im} H(t)x\cdot x\ge c|x|^2$, $c>0$. Furthermore, 
\[
a_0(x'_1,t;\mu)\to e^{\Phi^{(1)}(x'_1,t)+f^{(1)}(t)}\eta(x_1,t), \quad b_0(x'_1,t;\mu)\to e^{\Phi^{(2)}(x'_1,t)+f^{(2)}(t)},\quad \mu\to \infty,
\] 
uniformly,  where 
\[
\p \Phi^{(1)}=-\frac{1}{2} (i A^{(1)}_1(x_1,t,0)+A^{(1)}_t(x_1,t,0)),
\]
\[
\overline{\p} \Phi^{(2)}=\frac{1}{2} (-i \overline{A^{(2)}_1(x_1,t,0)}+\overline{A^{(2)}_t (x_1,t,0)}),
\]
\begin{equation}
\label{eq_gauss_28}
\p_t f^{(j)}=-\frac{1}{2}\tr H(t), \quad j=1,2.
\end{equation}

Passing to the limit as $\mu\to\infty$ in \eqref{eq_gauss_26} and \eqref{eq_gauss_27}, by the dominated convergence theorem, we obtain that 
\begin{equation}
\label{eq_gauss_29}
\begin{aligned}
\lim_{\mu\to \infty} &  \int_{\{x'_1\}\times M_0} v_s \overline{w_s} \psi dV_{g_0}\\
&=\int_0^L e^{-2\lambda t} e^{\Phi^{(1)}(x'_1,t)+\overline{\Phi^{(2)} (x'_1,t)}+f^{(1)}(t)+\overline{f^{(2)}(t)}}\eta(x_1,t) \psi(t,0) \int_{\R^{n-2}} e^{-\text{Im} H(t) x\cdot x} dxdt.
\end{aligned}
\end{equation}
Let us now simplify the expression in the right hand side of \eqref{eq_gauss_29}. To that end, notice that 
\begin{equation}
\label{eq_gauss_30}
\int_{\R^{n-2}} e^{-\text{Im} H(t) x\cdot x} dx=\frac{\pi^{(n-2)/2}}{\sqrt{\det(\text{Im} H(t))}}.
\end{equation}
and recall from \cite[Lemma 2.58]{KKL_book} that 
\begin{equation}
\label{eq_gauss_31}
\det(\text{Im} H(t))= \det(\text{Im} H(t_0)) e^{-2 \int_{t_0}^t \tr \text{Re} (H(s))ds}.
\end{equation}
Now it follows from \eqref{eq_gauss_28} that 
\[
\p_t (f^{(1)}+\overline{f^{(2)}})=-\tr \Re H(t),
\]
and therefore, 
\begin{equation}
\label{eq_gauss_32}
f^{(1)}+\overline{f^{(2)}}=C- \int_{t_0}^t \tr \text{Re} (H(s))ds.
\end{equation}
Choosing $f^{(1)}(t_0)$ and $f^{(2)}(t_0)$ so that 
\begin{equation}
\label{eq_normalization}
 \frac{ e^{f^{(1)}(t_0)+ f^{(2)}(t_0)} \pi^{(n-2)/2}}{\sqrt{\det(\text{Im} H(t_0))}}=1, 
\end{equation}
we obtain from \eqref{eq_gauss_29} using \eqref{eq_gauss_30}, \eqref{eq_gauss_31} and \eqref{eq_gauss_32} that 
\begin{equation}
\label{eq_gauss_32_arrang}
\lim_{\mu\to \infty}   \int_{\{x'_1\}\times M_0} v_s \overline{w_s} \psi dV_{g_0}=\int_0^L e^{-2\lambda t} e^{\Phi^{(1)}(x'_1,t)+\overline{\Phi^{(2)} (x'_1,t)}} \eta(x_1,t)\psi(t,0) dt.
\end{equation}
This completes the proof of \eqref{eq_prop_gaussian_3} in the case when $\supp(\psi)\subset W_k$.

Let us now establish \eqref{eq_prop_gaussian_3} when   $\supp(\psi) \subset V_j$.  Here  on $\supp(\psi)$ we have
\[
v_s=\sum_{l: \gamma(t_l)=p_j} v_s^{(l)}, \quad w_s=\sum_{l: \gamma(t_l)=p_j} w_s^{(l)},
\]
and hence, 
\begin{equation}
\label{eq_gaus_v_s_w_s_sum} 
v_s\overline{w_s}=\sum_{l: \gamma(t_l)=p_j} v_s^{(l)}\overline{w_s^{(l)}}+ \sum_{l\ne l',\gamma(t_l)=\gamma(t_{l'})=p_j} v_s^{(l)}\overline{w_s^{(l')}}.
\end{equation}
Arguing similarly to \cite{DKuLS_2016}, we shall show that the contribution of the mixed terms vanishes  in the limit $\mu\to \infty$, i.e. if $l\ne l'$,
\begin{equation}
\label{eq_gauss_33}
\lim_{\mu\to \infty}   \int_{\{x'_1\}\times M_0} v^{(l)}_s \overline{w^{(l')}_s} \psi dV_{g_0}= 0.
\end{equation}
To that end, write
\[
v_s^{(l)}=e^{i\mu\text{Re}\, \varphi^{(l)}}p^{(l)}, \quad p^{(l)}=e^{-\lambda \text{Re}\, \varphi^{(l)}} e^{-s \text{Im}\, \varphi^{(l)}}a^{(l)},
\]
and
\[
w_s^{(l')}=e^{i\mu\text{Re}\, \varphi^{(l')}}q^{(l')}, \quad q^{(l')}=e^{-\lambda \text{Re}\, \varphi^{(l')}} e^{-s \text{Im}\, \varphi^{(l')}}b^{(l')},
\]
and therefore, 
\begin{equation}
\label{eq_gauss_34}
v^{(l)}_s \overline{w^{(l')}_s} =e^{i\mu\phi} p^{(l)} \overline{q^{(l')}},
\end{equation}
where 
\[
\phi= \Re \varphi^{(l)}-\Re \varphi^{(l')}.
\]
Thus, in view of \eqref{eq_gauss_33} and \eqref{eq_gauss_34} we shall show that for $l\ne l'$,
\begin{equation}
\label{eq_gauss_35}
\lim_{\mu\to \infty}   \int_{\{x'_1\}\times M_0} e^{i\mu\phi} p^{(l)} \overline{q^{(l')}}  \psi dV_{g_0}= 0.
\end{equation}
Since $\p_t\varphi^{(l)}(t,0)=\p_t\varphi^{(l')}(t,0)=1$ and the geodesic intersects itself transversally, as explained in \cite[Lemma 7.2]{Kenig_Salo_APDE_2013},   we see that $d\phi(p_j) \ne 0$.  
By decreasing the set $V_j$ if necessary, we may assume that  $d\phi\ne 0$ in $V_j$.

To prove \eqref{eq_gauss_35}, we shall integrate by parts and in doing so, we let $\varepsilon>0$ and decompose 
$
\psi=\psi_1+\psi_2,
$
where $\psi_1\in C^\infty(M_0)$, $\supp(\psi_1)\subset V_j$ and  and $\|\psi_2\|_{L^\infty(V_j\cap M_0)}\le \varepsilon$.  Notice that $\psi$ may be nonzero on $\p M_0$.  We have
\begin{equation}
\label{eq_gauss_36}
\bigg| \int_{\{x'_1\}\times M_0} e^{i\mu\phi} p^{(l)} \overline{q^{(l')}}  \psi_2 dV_{g_0}\bigg| \le \| v^{(l)}_s\|_{L^2}\| w^{(l)}_s\|_{L^2}\|\psi_2\|_{L^\infty}\le \mathcal{O} (\varepsilon). 
\end{equation}
For the smooth part $\psi_1$, we integrate by parts using that 
\[
e^{i\mu\phi}=\frac{1}{i\mu} L(e^{i\mu \phi}), \quad L=\frac{1}{|d\phi|^{2}}\langle  d\phi, d\cdot \rangle_{g_0}.
\]
We have
\begin{equation}
\label{eq_gauss_37}
\begin{aligned}
 \int_{\{x'_1\}\times M_0} e^{i\mu\phi} p^{(l)} \overline{q^{(l')}}  \psi_1 dV_{g_0} =&\int_{\{x'_1\}\times (V_j\cap \p M_0)} \frac{\p_\nu \phi}{i\mu |d\phi|^2} e^{i\mu\phi} p^{(l)} \overline{q^{(l')}} \psi_1 dS \\
 &+\frac{1}{i\mu } \int_{\{x'_1\}\times M_0} e^{i\mu\phi} L^t( p^{(l)} \overline{q^{(l')}}  \psi_1) dV_{g_0}, 
\end{aligned}
\end{equation}
where $L^t=-L-\div L$ is the transpose of $L$. 

In view of \eqref{eq_v_s_on_boundary}, the boundary term is of $\mathcal{O}(\mu^{-1})$ as $\mu\to \infty$. To estimate the second term in the right hand side of \eqref{eq_gauss_37}, we recall that 
\begin{align*}
p^{(l)} \overline{q^{(l')}}=e^{-\lambda (\text{Re}\, \varphi^{(l)}+\text{Re}\, \varphi^{(l')})} e^{-i\lambda (\text{Im}\, \varphi^{(l)}-\text{Im}\, \varphi^{(l')})} 
e^{-\mu (\text{Im}\, \varphi^{(l)}+\text{Im}\, \varphi^{(l')})} \\
\mu^{\frac{n-2}{2}} a_0^{(l)}(x_1',t,\tau)\overline{b_0^{(l')}(x_1',t,\tau)}\chi^2 \bigg(\frac{y}{\delta'}\bigg).
\end{align*}
This shows that to bound the second term in the right hand side of \eqref{eq_gauss_37}, it suffices to analyze the contributions occurring when differentiating 
\[
e^{-\mu (\text{Im}\, \varphi^{(l)}+\text{Im}\, \varphi^{(l')})} a_0^{(l)}(x_1',t,\tau)\overline{b_0^{(l')}(x_1',t,\tau)},
\]
as all the other contributions are  of $\mathcal{O}(\frac{1}{\mu})$, as $\mu\to \infty$.  

As in \cite{DKuLS_2016}, we have
\[
|L (e^{-\mu (\text{Im}\, \varphi^{(l)}+\text{Im}\, \varphi^{(l')})} )|\le  \mathcal{O}(\mu) |d (\text{Im}\, \varphi^{(l)}+\text{Im}\, \varphi^{(l')}) |e^{-\mu c|y|^2}\le \mathcal{O}
(\mu |y|)e^{-\mu c|y|^2},
\]
which shows that the corresponding contribution to the second term  in the right hand side of \eqref{eq_gauss_37} is of $\mathcal{O}(\mu^{-1/2})$. 

Now it follows from \eqref{eq_gauss_12} that 
\[
|L(a_0^{(l)}(x_1',t,\tau)\overline{b_0^{(l')}(x_1',t,\tau)})|\le \mathcal{O}(\mu^{\sigma}), 
\]
with $0<\sigma<1/2$, and thus, the   corresponding contribution to the second term  in the right hand side of \eqref{eq_gauss_37} is of $\mathcal{O}(\mu^{-(1-\sigma)})$. 
This shows that the integral in the left hand side of \eqref{eq_gauss_37}  goes to $0$ as $\mu\to \infty$, and this together with   \eqref{eq_gauss_36} establishes \eqref{eq_gauss_33}. 

Using \eqref{eq_gauss_32_arrang} for each of the factors $v_s^{(l)}\overline{w_s^{(l)}}$ in \eqref{eq_gaus_v_s_w_s_sum}, we get
\[
\lim_{\mu\to \infty}   \int_{\{x'_1\}\times M_0} v^{(l)}_s \overline{w^{(l)}_s} \psi dV_{g_0}=\int_{I_l} e^{-2\lambda t} e^{\Phi^{(1)}(x'_1,t)+\overline{\Phi^{(2)} (x'_1,t)}} \eta(x_1,t) \psi(t,0) dt.
\]
Summing over $I_l$ such that  $\gamma(t_l)=p_j$, we get \eqref{eq_prop_gaussian_3} when   $\supp(\psi) \subset V_j$, and hence, in general.

Finally let us check  \eqref{eq_prop_gaussian_4} for $\alpha\in C(M,T^*M)$,  $\psi\in C(M_0)$ and $x'_1\in \R$. Using a partition of unity, it is enough to verify \eqref{eq_prop_gaussian_4} in the following two cases: $\supp(\psi)\subset W_k$ and $\supp(\psi)\subset V_j$. Assume first that  $\supp(\psi)\subset W_k$. Using \eqref{eq_gauss_25_v_s_w_s}, and writing $z=(t,y)$, we get 
\begin{equation}
\label{eq_gauss_38}
\begin{aligned}
h\int_{\{x'_1\}\times M_0} \langle \alpha, dv_s\rangle_g\overline{w_s}\psi & dV_{g_0}=h \int_{\{x'_1\}\times M_0} \big(\alpha_1(\p_{x_1}v_s)+  g_0^{kj}\alpha_k (\p_{z_j}v_s)\big)\overline{w_s}\psi dV_{g_0}\\
=&i h (\mu+i\lambda) \int_{\{x'_1\}\times M_0} g_0^{kj}\alpha_k (\p_{z_j}\varphi) v_s\overline{w_s}\psi dV_{g_0}\\
&+h \int_{\{x'_1\}\times M_0} g_0^{kj}\alpha_k e^{is\varphi}\mu^{\frac{n-2}{4}}(\p_{z_j}a_0) \chi\bigg(\frac{y}{\delta'}\bigg)\overline{w_s}\psi dV_{g_0}\\
&+h \int_{\{x'_1\}\times M_0} g_0^{kj}\alpha_k e^{is\varphi}\mu^{\frac{n-2}{4}}a_0 \bigg(\p_{z_j} \chi\bigg(\frac{y}{\delta'}\bigg)\bigg)\overline{w_s}\psi dV_{g_0}\\
&+h \int_{\{x'_1\}\times M_0} \alpha_1 e^{is\varphi}\mu^{\frac{n-2}{4}}(\p_{x_1}a_0) \chi\bigg(\frac{y}{\delta'}\bigg)\overline{w_s}\psi dV_{g_0}.
\end{aligned}
\end{equation}
Let us first show that the second, third and fourth integrals in the right hand side of \eqref{eq_gauss_38} vanish in the limit as $h\to 0$.  For the second integral, we have 
\begin{align*}
h \bigg|\int_{\{x'_1\}\times M_0} g_0^{kj}\alpha_k& e^{is\varphi} \mu^{\frac{n-2}{4}}(\p_{z_j}a_0)  \chi\bigg(\frac{y}{\delta'}\bigg)\overline{w_s}\psi dV_{g_0}\bigg|\\
&\le \mathcal{O} (h^{1-\sigma})\| e^{is\varphi}\mu^{\frac{n-2}{4}}  \|_{L^2(\{|y|\le \delta'/2\})}\|w_s\|_{L^2(M_0)}=\mathcal{O}(h^{1-\sigma})\to 0,
\end{align*}
as $h\to 0$, since $0<\sigma<1/2$. The fourth integral is estimated similarly and bounding the third integral is even more straightforward. 

When computing the limit of the first  integral in the right hand side of \eqref{eq_gauss_38}, we may neglect the contribution containing $\lambda$ and we  only have to show that 
\begin{equation}
\label{eq_gauss_39_0_1}
\begin{aligned}
i h \mu  \int_{\{x'_1\}\times M_0} g_0^{kj}\alpha_k &(\p_{z_j}\varphi) v_s\overline{w_s}\psi dV_{g_0}\\
&\to \int_0^L i \alpha_t(x_1', t, 0) e^{-2\lambda t} e^{\Phi^{(1)}(x'_1,t)+\overline{\Phi^{(2)} (x'_1,t)}} \eta(x_1,t)\psi(t,0) dt,
\end{aligned}
\end{equation}
as $h\to 0$. To that end, we proceed as in \eqref{eq_gauss_26}. Using  \eqref{eq_gauss_3_metric}, \eqref{eq_gauss_9}, 
and that 
\[
\p_t \varphi=1+\mathcal{O}(|y|^2),\quad \nabla_{y}\varphi =\mathcal{O}(|y|),
\]
we obtain that 
\begin{equation}
\label{eq_gauss_39}
\begin{aligned}
  i   \int_{\{x'_1\}\times M_0} & g_0^{kj}\alpha_k(x_1', t, y)  (\p_{z_j}\varphi) v_s\overline{w_s}\psi dV_{g_0} \\
 = & i \int_{0}^L \int_{\R^{n-2}} g_0^{kj}\alpha_k(x_1', t, y)  (\p_{z_j}\varphi)  e^{-2\mu\text{Im}\varphi} e^{-2\lambda\text{Re}\varphi} \mu^{\frac{n-2}{2}} a_0(x'_1,t;\mu)\overline{b_0(x'_1,t;\mu)} \\
 &\chi^2\bigg(\frac{y}{\delta'}\bigg) \psi(t,y) |g_0|^{\frac{1}{2}} dt dy\\
 = &i \int_0^L \int_{\R^{n-2}} 
 (\alpha_t(x_1', t, y)+  \mathcal{O}(|y|))
 e^{-\mu\text{Im} H(t)y\cdot y}  \\
 &e^{-2\lambda t} e^{\lambda \mathcal{O}(|y|^2)} \mu^{\frac{n-2}{2}} a_0(x'_1,t;\mu)\overline{b_0(x'_1,t;\mu)}\chi^2\bigg(\frac{y}{\delta'}\bigg) \psi(t,y) dt dy.
\end{aligned}
\end{equation}
Performing the change of variables $\mu^{1/2} y=x$ in \eqref{eq_gauss_39} and passing to the limit $h=\frac{1}{\mu}\to 0$ by  means of the dominated convergence theorem, we get in view of \eqref{eq_normalization}, 
\begin{align*}
\lim_{h\to 0} & \ i   \int_{\{x'_1\}\times M_0} g_0^{kj}\alpha_k (\p_{z_j}\varphi) v_s\overline{w_s}\psi dV_{g_0}\\
&=\int_0^L i \alpha_t(x_1', t, 0) e^{-2\lambda t} e^{\Phi^{(1)}(x'_1,t)+\overline{\Phi^{(2)} (x'_1,t)}} \eta(x_1,t)\psi(t,0) dt.
\end{align*}
This concludes the proof of  \eqref{eq_gauss_39_0_1} and thus, of  \eqref{eq_prop_gaussian_4} when $\supp(\psi)\subset W_k$.

Assume now that $\supp(\psi)\subset V_j$.  In this case we write 
\[
v_s=\sum_{\gamma(t_l)=p_j} v_s^{(l)}, \quad w_s=\sum_{\gamma(t_l)=p_j} w_s^{(l)},
\]
on $\supp(\psi)$. 
Then we have 
\begin{align*}
h\int_{\{x'_1\}\times M_0} \langle \alpha,dv_s\rangle_g\overline{w_s}\psi dV_{g_0}= h \sum_{l:\gamma(t_l)=p_j} \int_{\{x'_1\}\times M_0} \langle \alpha,dv^{(l)}_s\rangle_g\overline{w^{(l)}_s}\psi dV_{g_0}\\
+h \sum_{l\ne l',  \gamma(t_l)=\gamma(t_{l'})=p_j} \int_{\{x'_1\}\times M_0} \langle \alpha,dv^{(l)}_s\rangle_g\overline{w^{(l')}_s}\psi dV_{g_0}.
\end{align*}
As before, see \eqref{eq_gauss_33}, we want to show that the mixed terms vanish in the limit as $h\to 0$, i.e. if $l\ne l'$,
\begin{equation}
\label{eq_gauss_40}
\lim_{h\to 0}  h \int_{\{x'_1\}\times M_0} \langle \alpha,dv^{(l)}_s\rangle_g\overline{w^{(l')}_s}\psi dV_{g_0}= 0.
\end{equation}
In view of \eqref{eq_gauss_38} we only have to check that 
\[
\lim_{h\to 0}   \int_{\{x'_1\}\times M_0} g_0^{kj}\alpha_k (\p_{z_j}\varphi^{(l)}) v_s^{(l)}\overline{w_s^{(l')}}\psi dV_{g_0}=0.
\]
This follows by repeating a non-stationary phase argument  as in the proof of \eqref{eq_gauss_35}. Hence,
\begin{align*}
\lim_{h\to 0}h\int_{\{x'_1\}\times M_0} \langle \alpha, & dv_s\rangle_g\overline{w_s}\psi dV_{g_0}\\
&=\sum_{l:\gamma(t_l)=p_j}  
\int_{I_l} i \alpha_t(x_1', t, 0) e^{-2\lambda t} e^{\Phi^{(1)}(x'_1,t)+\overline{\Phi^{(2)} (x'_1,t)}} \eta(x_1,t) \psi(t,0) dt,
\end{align*}
which completes the proof of  \eqref{eq_prop_gaussian_4} when $\supp(\psi)\subset V_j$ and hence, in general. 

The proof of \eqref{eq_prop_gaussian_5} is analogous to the proof of \eqref{eq_prop_gaussian_4}. This completes the proof of Proposition \ref{prop_Gaussian_beams}. 
\end{proof}

\section{Construction of complex geometric optics solutions based on Gaussian beam quasimodes}

\label{sec_CGO_based_quasi}

Let $(M,g)$ be a conformally transversally anisotropic manifold  so that $(M,g)\subset(\R\times M_0^0, c(e\oplus g_0))$ and let $A\in C^\infty(M, T^*M)$ and $q\in L^\infty(M,\C)$.

Let $\tilde g=e\oplus g_0$. Then for the magnetic Schr\"odinger operator $L_{g,A,q}$ defined in  \eqref{eq_int_1}, we have
\begin{equation}
\label{eq_CGO_1}
c^{\frac{n+2}{4}} \circ L_{g,A,q} \circ c^{-\frac{(n-2)}{4}} = L_{\tilde g,A,\tilde q}.
\end{equation}
where 
\[
\tilde q= c\big(q - c^{\frac{n-2}{4}}\Delta_g(c^{-\frac{(n-2)}{4}})\big).
\]
Indeed, first recall from \cite{DKuLS_2016} that 
\begin{equation}
\label{eq_CGO_2}
c^{\frac{n+2}{4}}(-\Delta_g)(c^{-\frac{(n-2)}{4}}\tilde u)=-\Delta_{\tilde g}\tilde u-\big(c^{\frac{n+2}{4}}\Delta_g(c^{-\frac{(n-2)}{4}})\big)\tilde u.
\end{equation}
Using that
\[
|g|=c^n|\tilde g|, \quad g^{ij}=c^{-1}\tilde g^{ij},
\]
we get 
\begin{equation}
\label{eq_CGO_3}
c^{\frac{n+2}{4}} i d_g^*(A c^{-\frac{(n-2)}{4}}\tilde u)=i d^*_{\tilde g}(A\tilde u)-i \bigg(\frac{n}{4}-\frac{1}{2}\bigg)c^{-1}\tilde u \langle A, dc \rangle_{\tilde g},
\end{equation}
\begin{equation}
\label{eq_CGO_4}
-i c^{\frac{n+2}{4}}  \langle A,d (c^{-\frac{(n-2)}{4}}\tilde u)\rangle_{g}=-i \langle A, d\tilde u\rangle_{\tilde g}+  i \bigg(\frac{n}{4}-\frac{1}{2}\bigg)c^{-1}\tilde u \langle A, dc\rangle_{\tilde g},
\end{equation}
and 
\begin{equation}
\label{eq_CGO_5}
c^{\frac{n+2}{4}} (\langle A, A\rangle_g^2+q)(c^{-\frac{(n-2)}{4}}\tilde u)= (\langle A, A\rangle_{\tilde g}^2 + cq)\tilde u. 
\end{equation}
Thus, \eqref{eq_CGO_1} follows from \eqref{eq_CGO_2},  \eqref{eq_CGO_3},  \eqref{eq_CGO_4} and  \eqref{eq_CGO_5}.

Let us write  $x=(x_1,x')$ for local coordinates in $\R\times M_0$, and let
\[
s=\mu+i\lambda,\quad 1\le \mu=\frac{1}{h},\quad\lambda\in \R, \quad \lambda \text{ fixed}. 
\]

We are interested in finding complex geometric optics solution to the equation 
\begin{equation}
\label{eq_CGO_6}
L_{\tilde g,A,\tilde q} \tilde u=0\quad \text{in}\quad M,
\end{equation}
having the form
\[
\tilde u=e^{-sx_1}(v+r),
\]
where $v=v_s$ is the Gaussian beam quasimode given in Proposition \ref{prop_Gaussian_beams}, and $r=r_s$ is a correction term.  
Thus,  $\tilde u$  is a solution of \eqref{eq_CGO_6}
 provided that 
 \begin{equation}
 \label{eq_cgo_conjug}
 e^{\frac{x_1}{h}}h^2 L_{\tilde g,A,\tilde q} e^{-\frac{x_1}{h}}( e^{-i\lambda x_1}r)=- e^{-i\lambda x_1} e^{sx_1} h^2 L_{\tilde g,A,\tilde q} e^{-sx_1}v.
 \end{equation}

By Proposition \ref{prop_solvability} and \eqref{eq_prop_gaussian_1}, for all $h>0$ small enough, there is  $r\in H^1(M^0)$ such that \eqref{eq_cgo_conjug} holds and 
$\|r\|_{H^1_{\text{scl}}(M^0)}=o(1)$ as $h\to 0$. To summarize, we have the following result.

\begin{prop}
\label{prop_CGO_general_mnfld}
Let $A^{(1)}, A^{(2)}\in C(M,T^*M)$ and $q^{(1)},q^{(2)}\in L^\infty(M,\C)$.  Let $s=\frac{1}{h}+i\lambda$ with $\lambda\in \R$  being fixed. For all $h>0$ small enough, there is a solution $u_1\in H^1(M^0)$ of $L_{g,A^{(1)},q^{(1)}} u_1=0$ in $\mathcal{D}'(M^0)$ having the form
\[
u_1=e^{-sx_1} c^{-\frac{(n-2)}{4}} (v_s+r_1),
\]
where $v_s\in C^\infty(M)$ is the Gaussian beam quasimode given in Proposition \ref{prop_Gaussian_beams}, and $r_1\in H^1(M^0)$ is such that $\|r_1\|_{H^1_{\text{scl}}(M^0)}=o(1)$ as $h\to 0$.

Similarly, for all $h>0$ small enough, there is a solution $u_2\in H^1(M^0)$ of $L_{g,\overline{A^{(2)}},\overline{q^{(2)}}} u_2=0$ in $\mathcal{D}'(M^0)$ having the form
\[
u_2=e^{sx_1} c^{-\frac{(n-2)}{4}} (w_s+r_2),
\]
where $w_s\in C^\infty(M)$ is the Gaussian beam quasimode given in Proposition \ref{prop_Gaussian_beams}, and $r_2\in H^1(M^0)$ is such that $\|r_2\|_{H^1_{\text{scl}}(M^0)}=o(1)$ as $h\to 0$.
\end{prop}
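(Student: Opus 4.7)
The plan is to read off this proposition as a clean synthesis of the conformal reduction \eqref{eq_CGO_1}, the quasimode construction of Proposition \ref{prop_Gaussian_beams}, and the solvability result of Proposition \ref{prop_solvability}; nearly all of the analytic work has already been done.

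First I would use the identity \eqref{eq_CGO_1} to reduce the equation $L_{g,A^{(1)},q^{(1)}}u_1 = 0$ to the equation $L_{\tilde g, A^{(1)}, \tilde q^{(1)}} \tilde u_1 = 0$ with $\tilde g = e \oplus g_0$ and $u_1 = c^{-(n-2)/4}\tilde u_1$, together with the explicit expression for $\tilde q^{(1)} \in L^\infty(M,\C)$. This brings us into the transversally anisotropic setting to which Propositions \ref{prop_Gaussian_beams} and \ref{prop_solvability} directly apply, since $\varphi(x) = x_1$ is a limiting Carleman weight for $-h^2 \Delta_{\tilde g}$ on a neighborhood of $M$ in the product.

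Next I would pose the ansatz $\tilde u_1 = e^{-sx_1}(v_s + r_1)$ with $s = \frac{1}{h} + i\lambda$, where $v_s$ is the Gaussian beam quasimode from Proposition \ref{prop_Gaussian_beams}. Writing $e^{-sx_1} = e^{-\varphi/h} e^{-i\lambda x_1}$ and conjugating the operator $h^2 L_{\tilde g, A^{(1)}, \tilde q^{(1)}}$ by $e^{-\varphi/h}$, the equation for $r_1$ reduces to
\begin{equation*}
e^{\varphi/h} h^2 L_{\tilde g,A^{(1)},\tilde q^{(1)}} e^{-\varphi/h}(e^{-i\lambda x_1} r_1) = -e^{-i\lambda x_1}\, e^{s x_1} h^2 L_{\tilde g,A^{(1)},\tilde q^{(1)}} e^{-sx_1} v_s,
\end{equation*}
which is precisely \eqref{eq_cgo_conjug}. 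The right-hand side has $H^{-1}_{\text{scl}}(M^0)$ norm $o(h)$ as $h \to 0$ by \eqref{eq_prop_gaussian_1} of Proposition \ref{prop_Gaussian_beams}, since the factor $e^{-i\lambda x_1}$ is uniformly bounded in $W^{1,\infty}$ (with $h$-independent bounds) and hence a multiplier on $H^{-1}_{\text{scl}}$. Applying Proposition \ref{prop_solvability} to the potentials $A^{(1)}$ and $\tilde q^{(1)}$, for all $h$ small there exists $\rho_1 \in H^1(M^0)$ solving this equation with $\|\rho_1\|_{H^1_{\text{scl}}(M^0)} = o(1)$; setting $r_1 = e^{i\lambda x_1} \rho_1$ preserves the norm bound and yields the desired CGO solution $u_1 = e^{-sx_1} c^{-(n-2)/4}(v_s + r_1)$.

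For $u_2$ the construction is identical with $s$ replaced by $-s$ and the potentials replaced by $\overline{A^{(2)}}, \overline{q^{(2)}}$: one conjugates by $e^{x_1/h}$ (so $-\varphi = -x_1$, which is also a limiting Carleman weight), uses the quasimode $w_s$ from \eqref{eq_prop_gaussian_2}, and invokes Proposition \ref{prop_solvability} one more time. There is no real obstacle here; the only point requiring a moment of care is checking that the harmless oscillating prefactor $e^{-i\lambda x_1}$ does not spoil the $H^{-1}_{\text{scl}}$ bound on the right-hand side of the equation for the remainder, which follows from a straightforward duality estimate since $\lambda$ is fixed and independent of $h$.
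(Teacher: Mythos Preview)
Your proposal is correct and follows essentially the same approach as the paper: the conformal reduction \eqref{eq_CGO_1}, the ansatz leading to \eqref{eq_cgo_conjug}, and the combination of Proposition~\ref{prop_Gaussian_beams} with Proposition~\ref{prop_solvability} are exactly what the paper does. Your explicit remark that the fixed-frequency factor $e^{-i\lambda x_1}$ is a bounded multiplier on $H^{-1}_{\text{scl}}$ (and likewise on $H^{1}_{\text{scl}}$) is a detail the paper leaves implicit but is indeed the only point requiring a small check.
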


\section{Determining the magnetic field in Theorem \ref{thm_main_2}}

\label{sec_thm_main_2}

First by the boundary reconstruction result of Proposition \ref{prop_boundary_rec}, we have $\langle  A^{(1)}(x_0)- A^{(2)}(x_0), \tau\rangle=0$ for all $x_0\in \p M$ and all $\tau\in T_{x_0}\p M$.  Furthermore, using a partition of unity, flattening the boundary, and applying \cite[Theorem 1.3.3]{Hormander_book_1}, we conclude that there exists $\psi\in C^1(M)$ such that $\psi|_{\p M}=0$ and $d\psi= A^{(1)}-A^{(2)}$ on $\p M$. By Lemma \ref{lem_gauge}, we get 
\[
C_{g, A^{(1)},q^{(1)}} = C_{g, A^{(2)},q^{(2)}}= C_{g, A^{(2)}+d\psi,q^{(2)}},
\]
and replacing $A^{(2)}$ by $A^{(2)}+d\psi$, we see that in what follows we may assume that $A^{(1)}=A^{(2)}$ on $\p M$.  We may therefore extend $\tilde A=A^{(1)} - A^{(2)}$ by zero to the complement of $M$ in $\R \times M_0$, so that the extension $\tilde A$ is continuous. 

Our next point is the integral identify \eqref{eq_proof_1}. Here, similarly to the proof of Theorem \ref{thm_main}, we would like to substitute the complex geometric optics solutions of Proposition \ref{prop_CGO_general_mnfld}, multiply by $h$ and pass to the limit $h\to 0$.  In the computations below we shall use the following consequences of Proposition \ref{prop_Gaussian_beams}
and Proposition \ref{prop_CGO_general_mnfld}, 
\begin{equation}
\label{eq_recovery_1}
\|v_s\|_{L^2(M)}=\mathcal{O}(1),\quad  \|w_s\|_{L^2(M)}=\mathcal{O}(1),
\end{equation}
\begin{equation}
\label{eq_recovery_1_1}
\|d v_s\|_{L^2(M)}=\mathcal{O}(h^{-1}),\quad  \|d w_s\|_{L^2(M)}=\mathcal{O}(h^{-1}),
\end{equation}
\begin{equation}
\label{eq_recovery_2}
\|r_1\|_{L^2(M)}=o(1), \quad \|r_1\|_{L^2(M)}=o(1), 
\end{equation}
\begin{equation}
\label{eq_recovery_3}
\|dr_1\|_{L^2(M)}=o(h^{-1}), \quad \|dr_2\|_{L^2(M)}=o(h^{-1}), 
\end{equation}
as $h\to 0$.

First,  we have
\begin{align*}
u_1\overline{u_2}=e^{-2 i \lambda x_1} c^{-\frac{(n-2)}{2}} (v_s \overline{w_s} +v_s \overline{r_2}+ \overline{w_s} r_1+r_1\overline{r_2}),
\end{align*}
and therefore, using that \eqref{eq_recovery_1}, and \eqref{eq_recovery_2}, 
we see that 
\[
h \bigg| \int_M  (\langle A^{(1)}, A^{(1)}\rangle_g- \langle A^{(2)}, A^{(2)}\rangle_g+q^{(1)}-q^{(2)})u_1\overline{u_2}dV_g\bigg|=\mathcal{O}(h), \quad h\to 0. 
\]

We get 
\begin{align*}
u_1d\overline{u_2}&-\overline{u_2}du_1 =e^{-2 i \lambda x_1} c^{-\frac{(n-2)}{2}} \\
&\big[
(v_s+r_1)(d\overline{w_s}+d\overline{r_2})- (dv_s+dr_1)(\overline{w_s}+\overline{r_2})
+\frac{2}{h}(v_s+r_1)(\overline{w_s}+\overline{r_2})dx_1
\big].
\end{align*}
Using \eqref{eq_recovery_1}, \eqref{eq_recovery_2}, we obtain that 
\[
h \bigg| \frac{2}{h}i \int_{M} \langle \tilde A, dx_1\rangle_g  [v_s \overline{r_2} +\overline{w_s} r_1+r_1\overline{r_2}] e^{-2i\lambda x_1}c^{-\frac{(n-2)}{2}} dV_g \bigg|=o(1),\quad h\to 0.
\] 

Using \eqref{eq_recovery_1_1} and \eqref{eq_recovery_2}, we conclude that 
\begin{align*}
h \bigg| \int_M [\langle \tilde A, d\overline{w_s} \rangle_g r_1 -\langle \tilde A, dv_s \rangle_g \overline{r_2}]  e^{-2i\lambda x_1}c^{-\frac{(n-2)}{2}} dV_g \bigg|=o(1),\quad h\to 0.
\end{align*}

Using \eqref{eq_recovery_1}, \eqref{eq_recovery_2}, and \eqref{eq_recovery_3}, we get 
\begin{align*}
h \bigg| \int_M [ \langle \tilde A, d\overline{r_2} \rangle_g (v_s + r_1) -\langle \tilde A, dr_1\rangle_g (\overline{w_s}+\overline{r_2}) ]  e^{-2i\lambda x_1}c^{-\frac{(n-2)}{2}} dV_g \bigg|=o(1), 
\end{align*}
as $ h\to 0$. 

Now using that $g=c(e\oplus g_0)$, we get $\langle \tilde A, dx_1\rangle_{g}=c^{-1}\tilde A_1$. Writing $x=(x_1,x')$, $x'\in M_0$, using the fact that $\tilde A=0$ outside of $M$,  Fubini's theorem, \eqref{eq_prop_gaussian_3}, and the dominated convergence theorem,  we obtain that   
\begin{align*}
2i \int_M \langle  \tilde A,& dx_1  \rangle_g  v_s  \overline{w_s}  e^{-2i\lambda x_1}  c^{-\frac{(n-2)}{2}} dV_g=2i \int_{\R} e^{-2i\lambda x_1} \bigg(\int_{M_0}
 \tilde A_1(x_1,x') v_s \overline{w_s}
dV_{g_0}\bigg)dx_1\\
&\to 2i \int_{\R} e^{-2i\lambda x_1}  \int_0^L e^{-2\lambda t} e^{\Phi^{(1)}(x_1,t)+\overline{\Phi^{(2)} (x_1,t)}} \eta(x_1,t)\tilde A_1(x_1,\gamma(t)) dt dx_1,
\end{align*}
where $\Phi^{(1)}, \Phi^{(2)}\in C(\R\times [0,L])$ satisfy the following transport equations,
\begin{equation}
\label{eq_recov_trans_tr_1}
 (\p_{x_1}-i\p_t) \Phi^{(1)}=- i A^{(1)}_1(x_1,\gamma(t))-A^{(1)}_t(x_1,\gamma(t)),
\end{equation}
\begin{equation}
\label{eq_recov_trans_tr_2}
 (\p_{x_1}+i\p_t) \Phi^{(2)}=-i \overline{A^{(2)}_1(x_1,\gamma(t))}+\overline{A^{(2)}_t (x_1,\gamma(t))}.
\end{equation}

Setting $\tilde g=e\oplus g_0$, we see that 
\[
\langle \tilde A, d\overline{w_s}\rangle_g=c^{-1}\langle \tilde A, d\overline{w_s}\rangle_{\tilde g}. 
\]
Using this formula, the fact that $\tilde A=0$ outside of $M$,  Fubini's theorem, \eqref{eq_prop_gaussian_5}, and the dominated convergence theorem, we get 
\begin{align*}
hi \int_M \langle \tilde A,  d\overline{w_s}  & \rangle_{g} v_s e^{-2i \lambda x_1}c^{-\frac{(n-2)}{2}}dV_g= i h \int_{\R} e^{-2i \lambda x_1} 
\bigg(\int_{M_0} \langle \tilde A, d\overline{w_s}\rangle_{\tilde g} v_s dV_{g_0}
\bigg)dx_1\\
&\to - i \int_{\R} e^{-2i \lambda x_1}  \int_0^L  i \tilde A_t(x_1,\gamma(t))  e^{-2\lambda t} e^{\Phi^{(1)}(x_1,t)+\overline{\Phi^{(2)} (x_1,t)}} \eta(x_1,t) dt dx_1.
\end{align*}
Similarly, using \eqref{eq_prop_gaussian_4}, we have
\begin{align*}
-hi \int_M \langle \tilde A, & d v_s \rangle_{g} \overline{w_s}  e^{-2i \lambda x_1}c^{-\frac{(n-2)}{2}}dV_g \\
&\to - i \int_{\R} e^{-2i \lambda x_1}  \int_0^L  i \tilde A_t(x_1,\gamma(t))  e^{-2\lambda t} e^{\Phi^{(1)}(x_1,t)+\overline{\Phi^{(2)} (x_1,t)}} \eta(x_1,t) dt dx_1.
\end{align*}

Hence, substituting complex geometric optics solutions of Proposition \ref{prop_CGO_general_mnfld} into the integral identify \eqref{eq_proof_1}, we obtain that 
\begin{equation}
\label{eq_recov_mag_1}
 \int_{\R}  \int_0^L   e^{-2i \lambda x_1}  (\tilde A_1(x_1,\gamma(t))-  i \tilde A_t(x_1,\gamma(t)))  e^{-2\lambda t} e^{\Phi^{(1)}(x_1,t)+\overline{\Phi^{(2)} (x_1,t)}} \eta(x_1,t) dt dx_1=0.
\end{equation}  
Here the domain of integration can be replaced by a bounded simply connected open set $\Omega\subset \C$ with smooth boundary containing the support of $\tilde A$. 
Now using \eqref{eq_recov_trans_tr_1} and \eqref{eq_recov_trans_tr_2} and writing $z=x_1+it$,  we see that 
\[
\p (\Phi^{(1)} + \overline{\Phi^{(2)}})=-\frac{i}{2} (\tilde A_1(x_1,\gamma(t))-i \tilde A_t(x_1,\gamma(t))). 
\]
It follows from \eqref{eq_recov_mag_1} that 
\[
 \int_\Omega    \p \big(\tilde\eta (x_1,t )e^{\Phi^{(1)}(x_1,t)+\overline{\Phi^{(2)} (x_1,t)}}\big)  dt dx_1=0,
\]
where
\[
\tilde \eta(x_1,t)= e^{-2i \lambda (x_1-it)} \eta(x_1,t), \quad \p \tilde \eta=0. 
\]
Repeating the arguments leading from \eqref{eq_proof_10} to 
\eqref{eq_proof_12}, we get
\begin{equation}
\label{eq_recov_mag_2}
\int_{\R}  \int_0^L e^{-2i \lambda x_1-2\lambda t}  (\tilde A_1(x_1,\gamma(t))-  i \tilde A_t(x_1,\gamma(t)))  dt dx_1=0.
\end{equation}
Letting 
\begin{align*}
&f(x',\lambda)= \int e^{-i\lambda x_1} \tilde A_1(x_1,x')dx_1, \quad x'\in M_0,\\
&\alpha(x',\lambda)=\sum_{j=2}^n \bigg( \int e^{-i\lambda x_1} \tilde A_j(x_1,x')dx_1 \bigg) dx_j,
\end{align*}
we have $f\in C(M_0)$, $\alpha\in C(M_0,T^*M_0)$ and  
we conclude from \eqref{eq_recov_mag_2}, replacing $2\lambda$ by $\lambda$, that 
\begin{equation}
\label{eq_recov_mag_3}
\int_0^L \big[ f(\gamma(t),\lambda) -i \alpha (\dot{\gamma}(t),\lambda) \big] e^{-\lambda t}dt=0,
\end{equation}
along any unit speed non-tangential geodesic $\gamma:[0,L]\to M_0$ on $M_0$, and 
for any $\lambda\in \R$.  Following \cite{DKuLS_2016} and \cite{Cekic}, we shall use the geodesic transform and proceed as follows. 
First,  evaluating \eqref{eq_recov_mag_3} at $\lambda=0$ and using the injectivity of  the unattenuated geodesic transform, we get 
\begin{equation}
\label{eq_recov_mag_4_-2}
f(x',0)=0, \quad \alpha(x',0)=idp_0(x'),
\end{equation}
for some $p_0\in C^1(M_0)$ such that $p_0|_{\p M_0}=0$. Next differentiating  \eqref{eq_recov_mag_3} with respect to $\lambda$ and letting $\lambda=0$, we get
\begin{equation}
\label{eq_recov_mag_4}
\int_0^L \big[ \p_\lambda f(\gamma(t),0) -i \p_\lambda\alpha (\dot{\gamma}(t),0) +i t \alpha(\dot{\gamma}(t),0)  \big] dt=0.
\end{equation}
Using that 
\begin{equation}
\label{eq_recov_mag_4+1}
\alpha(\dot{\gamma}(t),0)=idp_0(\dot{\gamma}(t))=i \frac{d}{dt}p_0(\gamma(t)),
\end{equation}
and integrating by parts in \eqref{eq_recov_mag_4}, we see that 
\[
\int_0^L \big[ \p_\lambda f(\gamma(t),0) +p_0(\gamma(t)) -i \p_\lambda\alpha (\dot{\gamma}(t),0) \big] dt=0.
\]
The injectivity of  the geodesic transform implies that 
\begin{equation}
\label{eq_recov_mag_5-1}
\p_\lambda f(x',0) +p_0(x')=0, \quad \p_\lambda \alpha (x',0)=i dp_1(x'),  
\end{equation}
for some $p_1\in C^1(M_0)$ such that $p_1|_{\p M_0}=0$. Differentiating  \eqref{eq_recov_mag_3} twice with respect to $\lambda$, we obtain that 
\begin{equation}
\label{eq_recov_mag_5}
\begin{aligned}
\int_0^L \big[ t^2 \big(f(\gamma(t),\lambda) -i \alpha (\dot{\gamma}(t),\lambda)\big)
- 2t \p_\lambda \big(f(\gamma(t),\lambda) -i \alpha (\dot{\gamma}(t),\lambda)\big)\\ 
+ 
\p^2_{\lambda }\big( f(\gamma(t),\lambda) -i  \alpha (\dot{\gamma}(t),\lambda) \big) \big] e^{-\lambda t}dt=0.
\end{aligned}
\end{equation}
By \eqref{eq_recov_mag_4+1} and integration by parts, we have
\begin{equation}
\label{eq_recov_mag_6}
-\int_0^L i t^2 \alpha (\dot{\gamma}(t),0)dt= \int_0^L  t^2 \frac{d}{dt}p_0(\gamma(t)) dt= -\int_0^L 2t p_0(\gamma(t))dt,
\end{equation}
and similarly, 
\begin{equation}
\label{eq_recov_mag_7}
\int_0^L 2 i t \p_\lambda \alpha (\dot{\gamma}(t),0)dt=-2\int_0^L t \frac{d}{dt}p_1(\gamma(t)) dt= 2 \int_0^L  p_1(\gamma(t))dt.
\end{equation}
Letting $\lambda=0$ in \eqref{eq_recov_mag_5} and using \eqref{eq_recov_mag_4_-2}, \eqref{eq_recov_mag_5-1}, \eqref{eq_recov_mag_6}, \eqref{eq_recov_mag_7}, we get 
\[
\int_0^L \big[ \p^2_\lambda f(\gamma(t),0)+2p_1(\gamma(t))-i \p_\lambda^2 \alpha(\dot{\gamma}(t),0) \big]dt=0,
\]
and by the injectivity of the geodesic transform, we have
\[
 \p^2_\lambda f(x',0)+2p_1(x')=0,\quad \p_\lambda^2 \alpha(x',0)=idp_2(x')
\]
 some $p_2\in C^1(M_0)$ such that $p_2|_{\p M_0}=0$.   Proceeding further by induction as in \cite{Cekic}, we conclude that
 \begin{equation}
\label{eq_recov_mag_8}
 \p^l_\lambda f(x',0)+lp_{l-1}(x')=0,\quad \p_\lambda^l \alpha(x',0)=idp_l(x'), \quad l=0,1,2,\dots,
 \end{equation}
for some $p_l\in C^1(M_0)$ such that $p_l|_{\p M_0}=0$. Here $p_{-1}=0$. 

Viewing $\tilde A_j\in C_0(\R_{x_1}, L^2(M_0))$, we see that $\mathcal{F}_{x_1\to \lambda} (\tilde A(x_1,x'))\in \text{Hol}(\C, L^2(M_0))$. 
Using \eqref{eq_recov_mag_8}, we get
\begin{align*}
\p_{\lambda}^l|_{\lambda=0} \mathcal{F}_{x_1\to \lambda} (\p_{x_j}\tilde A_k-\p_{x_k}\tilde A_j)&=\p_{x_j}\p_{\lambda}^l|_{\lambda=0}\alpha_k-
\p_{x_k}\p_{\lambda}^l|_{\lambda=0}\alpha_j\\
&=i (\p_{x_j}\p_{x_k}p_l-\p_{x_k}\p_{x_j}p_l)=0,\quad  j,k=2,\dots, n,
\end{align*}
and therefore,
\[
\p_{x_j}\tilde A_k-\p_{x_k}\tilde A_j=0, \quad  j,k=2,\dots, n.
\]
Also by \eqref{eq_recov_mag_8}, we obtain that 
\begin{align*}
\p_\lambda^l |_{\lambda=0}  \mathcal{F}_{x_1\to \lambda} (\p_{x_j}\tilde A_1-\p_{x_1}\tilde A_j)=\p_{x_j}\p_\lambda^l|_{\lambda=0}f - \p_\lambda^l|_{\lambda=0}(i\lambda \alpha_j)\\
=-l\p_{x_j}p_{l-1}-il \p_\lambda^{l-1}|_{\lambda=0}\alpha_j=0,
\end{align*}
thus, $\p_{x_j}\tilde A_1-\p_{x_1}\tilde A_j=0$. Hence, $d\tilde A=0$ in $M$, and therefore, $dA^{(1)}=dA^{(2)}$ in $M$.

\section{Determining the holonomy and completing the proof of Theorem \ref{thm_main_2}} 

\label{sec_hol}

Throughout this section we assume that $(M,g)$ is a smooth compact Riemannian manifold with smooth boundary $\p M$.  Let $A^{(1)}, A^{(2)}\in C(M,T^*M)$  and $q^{(1)}=q^{(2)}=q\in L^\infty(M,\C)$. We set 
$\tilde A=A^{(1)}-A^{(2)}$.  Let $\nabla^{\tilde A}=d+i\tilde A$ be the connection on the trivial line bundle $M\times \C$ over $M$ associated to $\tilde A$.  

Our starting point is the fact that  $d\tilde A=0$, which can be viewed as the statement that the curvature of the connection $\nabla^{\tilde A}$ vanishes, see \cite{Paternain}.

Let us recall some definitions and facts following \cite{Paternain} and \cite[Section 6]{Guillarmou_Zhou}.  Given a $C^1$ curve $\gamma:[a,b]\to M$, the parallel transport along $\gamma$ is obtained by solving the initial value problem for the linear ODE,
\begin{equation}
\label{eq_holonomy_1}
\begin{cases} \dot{s}(t)+i \tilde A(\gamma(t), \dot{\gamma}(t))s(t)=0,\\
s(a)=s_0\in \C.
\end{cases}
\end{equation}
We observe that  for $\tilde A\in C(M,T^*M)$,  problem \eqref{eq_holonomy_1} has a unique solution.  Associated to \eqref{eq_holonomy_1}, we introduce the linear map $P_\gamma^{\tilde A}:\C\to \C$ defined by  $P_\gamma^{\tilde A}(s_0)=s(b)$. We have
\[
P^{\tilde A}_\gamma=e^{-i \int_\gamma \tilde A}. 
\] 
The holonomy group of the connection $\nabla^{\tilde A}$ at a point $m\in M$ is given by
 \[
 H_m(\nabla^{\tilde A})=\{P_\gamma^{\tilde A}\in \C\setminus\{0\}: \gamma \text{ is a loop based at } m\}.
 \] 
Let $\pi_1(M,m)$ be the fundamental group of $M$ at $m$, i.e. the set of all loops based at $m$ up to homotopy equivalence. Using the fact that the curvature  $d\tilde A=0$, we see that the map $\gamma\mapsto P_\gamma^{\tilde A}$ gives rise to a natural group homomorphism 
\[
\rho_m^{\tilde A}:\pi_1(M, m) \to H_m(\nabla^{\tilde A}).
\]
The homomorphism $\rho_m^{\tilde A}$ is called the holonomy representation of $\tilde A$ into $\C\setminus\{0\}$,  and $\rho_m^{\tilde A}$ is trivial if and only if 
\begin{equation}
\label{eq_holonomy_1_0}
e^{-i \int_\gamma \tilde A}=1 \Longleftrightarrow \int_{\gamma}\tilde A\in 2\pi\Z,
\end{equation}
for all loops $\gamma$ based at $m$.  If $M$ is connected, the fact that the fundamental groups $\pi_1(M, m)$ are isomorphic for different points $m$ upon conjugation by an appropriate curve implies that  the  condition \eqref{eq_holonomy_1_0} is independent of $m$. 

The main result of this section is as follows. It can be viewed as an analog of   \cite[Theorem 6.1]{Guillarmou_Zhou} and \cite[Theorem 6.3]{Cekic} for magnetic potentials which are merely continuous.  

\begin{prop}
\label{prop_holonomy}
Assume that $A^{(1)}, A^{(2)}\in C(M,T^*M)$ and $q\in L^\infty(M,\C)$. Set 
$\tilde A=A^{(1)}-A^{(2)}$. If $C_{g,A^{(1)},q}=C_{g,A^{(2)},q}$ and $d\tilde A=0$ then  the holonomy representation $\rho_m^{\tilde A}$ is trivial for every $m\in M$. 
\end{prop}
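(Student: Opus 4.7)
The goal is to construct a globally defined non-vanishing function $F \in C^1(M, \C)$ satisfying $\tilde A = iF^{-1}dF$ throughout $M$; given such an $F$, the parallel transport equation $\dot s + i\tilde A(\gamma, \dot\gamma) s = 0$ has $F(\gamma(t))/F(\gamma(0))$ as its solution from the initial value $1$, hence for any loop $\gamma$ based at $m$,
\[
P_\gamma^{\tilde A} = e^{-i\int_\gamma \tilde A} = F(\gamma(1))/F(\gamma(0)) = 1,
\]
and the holonomy representation $\rho_m^{\tilde A}$ is trivial.

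First I would reduce to the case $A^{(1)} = A^{(2)}$ on $\p M$ exactly as in the opening lines of Section \ref{sec_thm_main_2}: Proposition \ref{prop_boundary_rec} forces the tangential parts of $A^{(1)}$ and $A^{(2)}$ to coincide on $\p M$, and a gauge $\psi \in C^1(M)$ with $\psi|_{\p M} = 0$ and $d\psi|_{\p M} = \tilde A|_{\p M}$ may then be absorbed into $A^{(2)}$ via Lemma \ref{lem_gauge} without altering the Cauchy data. After this, $\tilde A$ extends by zero to a continuous closed $1$-form on a slight enlargement of $M$. Next, at each $x_0 \in \p M$ fix a small simply connected neighborhood $U_0$ of $x_0$ in the enlargement; since $d\tilde A = 0$ and $\tilde A$ is continuous, the Poincar\'e lemma (in the continuous category, viewing $\tilde A$ as a closed current) produces $\phi_0 \in C^1(U_0)$ with $d\phi_0 = \tilde A$, normalized so that $\phi_0|_{U_0 \cap \p M} = 0$. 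The local gauge $F_0 := e^{i\phi_0}$ then satisfies $\tilde A = iF_0^{-1}dF_0$ and $F_0|_{\p M \cap U_0} = 1$, and Lemma \ref{lem_gauge} shows that $u \mapsto F_0^{-1} u$ intertwines $L_{g,A^{(1)},q}$ and $L_{g,A^{(2)},q}$ on $U_0 \cap M$.

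The core step is then to promote $F_0$ to a single-valued, non-vanishing global $F \in C^1(M)$ using unique continuation from the boundary. For this, I would pick a solution $u_1 \in H^1(M^0)$ of $L_{g,A^{(1)},q} u_1 = 0$ that is non-vanishing throughout $U_0 \cap M$ (for instance, a complex geometric optics solution of Section \ref{sec_CGO_based_quasi}, whose leading exponential factor is non-vanishing). By the equality $C_{g,A^{(1)},q} = C_{g,A^{(2)},q}$, there is a companion $u_2 \in H^1(M^0)$ solving $L_{g,A^{(2)},q} u_2 = 0$ with the same Cauchy data on $\p M$. On $U_0 \cap M$, both $u_2$ and $F_0^{-1} u_1$ solve $L_{g,A^{(2)},q} w = 0$ with identical Cauchy data on $\p M \cap U_0$; weak unique continuation from the boundary for the magnetic Schr\"odinger operator with continuous $A$ and $L^\infty$ $q$, which follows from the Carleman estimates of Section \ref{sec_Carleman_estimates} via a standard propagation-of-smallness argument, therefore forces $u_2 = F_0^{-1} u_1$ in a neighborhood of $\p M \cap U_0$. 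Hence $F := u_1/u_2$ is a single-valued extension of $F_0$. Propagating this identity along arbitrary paths in $M$ by repeating the local construction on successive overlapping patches, and invoking unique continuation in each overlap to match the two candidate gauges, produces the desired global $F$; single-valuedness of $F$ is automatic since $u_1, u_2$ are honest functions on $M$.

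The main obstacle is this unique continuation / patching step. The boundary unique continuation itself should be manageable given the Carleman estimates already developed, though it must be verified carefully at the regularity $A \in C$, $q \in L^\infty$. More delicate is controlling the zero set of the companion solution $u_2$ so that $F = u_1/u_2$ remains well-defined as one propagates along a path, in particular across self-intersections and across the non-tangential geodesic fibers singled out by Assumption 1. A Runge-type approximation, or a judicious choice of Gaussian-beam parameters producing CGO solutions localized near prescribed geodesics on the transversal manifold, should provide enough flexibility to arrange non-vanishing on each patch; nevertheless, the combinatorics of gluing across self-intersections in the conformally transversally anisotropic setting is where the real work lies.
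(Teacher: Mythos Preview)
The kernel of your argument---define a local primitive of $\tilde A$ near a boundary point, use the corresponding gauge to intertwine the two operators, pair a solution $u_1$ with its Cauchy-data companion $u_2$, and invoke unique continuation---is exactly what the paper does. But your execution introduces a false obstacle and drags in machinery that plays no role here.

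The false obstacle is non-vanishing. You frame the conclusion as $F = u_1/u_2$ and then worry about the zero set of $u_2$ along paths, invoking CGO solutions, Gaussian beams, Assumption~1, and self-intersections of transversal geodesics. None of this is needed, and most of it is not even available: Proposition~\ref{prop_holonomy} is stated for an arbitrary compact manifold with boundary, with no transversal structure assumed, and the Gaussian-beam solutions of Section~\ref{sec_CGO_based_quasi} are concentrated near a single geodesic, hence certainly not globally non-vanishing. The paper never divides by a solution. Instead, for a given loop based at $m \in \partial M$, it homotopes to a curve whose interior portion $\gamma_1$ is an embedded arc with endpoints $m,\tilde m \in \partial M$ and whose return leg lies in $\partial M$ (contributing nothing since the tangential part of $\tilde A$ vanishes there). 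On a simply connected tubular neighborhood $U$ of $\gamma_1$ one sets $\varphi(x) = \int_m^x \tilde A$, takes any $u_1$ with smooth Dirichlet data $f$ satisfying $f(\tilde m)\neq 0$, obtains $u_2$ from the equality of Cauchy data, and applies boundary unique continuation (Proposition~\ref{prop_unique}, via Koch--Tataru rather than the limiting-weight Carleman estimates of Section~\ref{sec_Carleman_estimates}) to $e^{-i\varphi}u_2 - u_1$ on $U$. Equating $H^{1/2}$ traces near $\tilde m$ then gives $f(\tilde m) = e^{-i\varphi(\tilde m)}f(\tilde m)$, so $\int_\gamma \tilde A = \varphi(\tilde m) \in 2\pi\Z$. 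Non-vanishing is required only at the single boundary point $\tilde m$, where it is built into the choice of $f$. The global gauge $F$ is constructed in one line \emph{after} holonomy triviality is known, in the proof of Theorem~\ref{thm_main_2}; trying to build it first is what led you into the weeds.
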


Assuming that Proposition \ref{prop_holonomy} has been proved, let us complete the proof of Theorem \ref{thm_main_2}.

\textbf{Proof of Theorem \ref{thm_main_2}}.  Let $M_1,\dots, M_N$ be the connected components of the compact manifold $M$, and let $m_j\in \p M_j\subset \p M$.  
Let $F\in C^1(M, \C)$ be  given by
\begin{equation}
\label{eq_holonomy_2}
F(m')=e^{i\int_{\gamma_{m_jm'}}\tilde A}, \quad\text{if}\quad m'\in M_j,
\end{equation}
where $\gamma_{m_jm'}$ is a $C^\infty$ path joining $m_j$ and $m'$ in $M_j$.  The function $F$ is well defined in the sense that it does not depend on a choice of the path between $m_j$ and $m'$,  since the holonomy representation $\rho_{m_j}^{\tilde A}$ is trivial.  

As in the beginning of Section \ref{sec_thm_main_2}, without loss of generality, we may assume  that $A^{(1)}=A^{(2)}$ on $\p M$. This together with \eqref{eq_holonomy_2} implies that $F=1$ on $\p M$. 
In view of \eqref{eq_holonomy_2} we see that 
\begin{align*}
F^{-1}\circ d_{A^{(2)}}\circ F=d_{A^{(1)}},\quad F^{-1}\circ d^*_{\overline{A^{(2)}}}\circ F= d^*_{\overline{A^{(1)}}},
\end{align*}
and thus, 
\[
F^{-1}\circ L_{g,A^{(2)},q}\circ F=  L_{g,A^{(1)},q}.
\]
This completes the proof of  Theorem \ref{thm_main_2}.

\textbf{Proof of Proposition \ref{prop_holonomy}.} If suffices to check that the holonomy representation $\rho^{\tilde A}_m$ is trivial for $m\in \p M$.   Let $\gamma$ be a loop at $m\in \p M$ and let us show that 
\[
\int_\gamma \tilde A\in 2\pi\Z. 
\]
In doing so we shall follow \cite{Guillarmou_Zhou},  \cite{Cekic}, and replace $\gamma$ by a homotopically equivalent loop $\tilde \gamma: [0,2]\to M$ such that $\tilde \gamma(0)=m$, $\tilde \gamma(1)=\tilde m\in \p M$, $\tilde m\ne m$, $\gamma_1:=\tilde \gamma((0,1))\subset M^0$ and $\gamma_2:=\tilde \gamma([1,2])\subset \p M$. As explained  in \cite{Cekic}, we may assume that $\gamma_1$ and $\gamma_2$
are embedded curves. Using that the tangential component of  $\tilde A$ vanishes along $\p M$, we have 
\begin{equation}
\label{eq_holonomy_3}
\int_\gamma \tilde A=\int_{\gamma_1} \tilde A.
\end{equation}

Let $U=\{x\in M^0: \dist(x,\gamma_1)<\varepsilon \}$, $\varepsilon>0$ small, be a tubular neighborhood of $\gamma_1$. The set $U$ is diffeomorphic to $(0,1)\times B(0,\varepsilon)$, where $B(0,\varepsilon)$ is an open $(n-1)$-dimensional ball centered at $0$ of radius $\varepsilon$, and therefore, $U$ is simply connected.   When $x\in U$, let 
\begin{equation}
\label{eq_holonomy_3_1}
\varphi(x)=\int_m^x \tilde A\in C^1(\overline{U}). 
\end{equation}
The function $\varphi$ is well defined since $d\tilde A=0$ and $U$ is simply connected.  
We have $d\varphi=\tilde A=A^{(1)}-A^{(2)}$, and therefore,
\begin{equation}
\label{eq_holonomy_4}
e^{-i\varphi}\circ L_{g,A^{(2)},q}\circ e^{i\varphi}=L_{g, A^{(1)},q} \quad \text{in} \quad U. 
\end{equation}
Let $U_1=\{x\in \p M: d(x,m)<\varepsilon\}$. Then $\varphi|_{U_1}=0$. 

Let $f\in C^\infty(\p M)$ be such that $f(\tilde m)\ne 0$, and let $u_1\in H^1(M^0)$ be the solution of the following problem
\begin{equation}
\label{eq_holonomy_5}
\begin{aligned}
L_{g,A^{(1)},q}u_1&=0,\quad \text{in}\quad \mathcal{D}'(M^0),\\
u_1|_{\p M}&=f.
\end{aligned}
\end{equation}
As $C_{g,A^{(1)},q}=C_{g, A^{(2)},q}$, we conclude that there exists $u_2\in H^1(M^0)$ such that 
\begin{equation}
\label{eq_holonomy_6}
\begin{aligned}
L_{g,A^{(2)},q}u_2&=0,\quad \text{in}\quad \mathcal{D}'(M^0),\\
u_2|_{\p M}&=f,
\end{aligned}
\end{equation}
and 
\begin{equation}
\label{eq_holonomy_6_1}
\langle d_{A^{(1)}}u_1,\nu \rangle_g|_{\p M}= \langle d_{A^{(2)}}u_2,\nu \rangle_g|_{\p M}.
\end{equation}
 Letting 
\[
v=e^{-i\varphi}u_2\in H^1(U),
\]
and using \eqref{eq_holonomy_4} and \eqref{eq_holonomy_6}, we see that 
\[
L_{g,A^{(1)},q}v=0 \quad \text{in}\quad \mathcal{D}'(U),
\]
and therefore, by \eqref{eq_holonomy_5}, we get 
\begin{equation}
\label{eq_holonomy_7}
L_{g,A^{(1)},q}(v-u_1)=0 \quad \text{in}\quad \mathcal{D}'(U).
\end{equation}
We also obtain that  
\begin{equation}
\label{eq_holonomy_8}
(v-u_1)|_{U_1}=0, \quad \langle d_{A^{(1)}}(v-u_1),\nu \rangle_g|_{U_1}=0.
\end{equation}
Indeed, let $\chi\in H^{1/2}(\p M)$, $\supp(\chi)\subset U_1$,  and using \eqref{eq_trace_normal_int}, \eqref{eq_holonomy_6_1}, we get 
\begin{align*}
 \langle  \langle d_{A^{(1)}}v,\nu \rangle_g, \chi\rangle_{H^{-\frac{1}{2}}(\p M)\times H^{\frac{1}{2}}(\p M)}
 =\langle  \langle d_{A^{(2)}+ d\varphi} (e^{-i\varphi}u_2),\nu \rangle_g,  e^{i\varphi}\chi\rangle_{H^{-\frac{1}{2}}(\p M)\times H^{\frac{1}{2}}(\p M)}\\
 =\langle  \langle d_{A^{(2)}} u_2,\nu \rangle_g, \chi\rangle_{H^{-\frac{1}{2}}(\p M)\times H^{\frac{1}{2}}(\p M)}=
 \langle  \langle d_{A^{(1)}}u_1,\nu \rangle_g, \chi\rangle_{H^{-\frac{1}{2}}(\p M)\times H^{\frac{1}{2}}(\p M)},
\end{align*}
showing the second equality in \eqref{eq_holonomy_8}.

Now in view of \eqref{eq_holonomy_7} and \eqref{eq_holonomy_8}, by the unique continuation stated in  Proposition \ref{prop_unique} below, we conclude that $u_1=v=e^{-i\varphi}u_2$ in $U$.  Letting $U_2=\{x\in \p M: d(x,\tilde m)<\varepsilon\}$, we get the equality of the traces,
\[
u_1=e^{-i\varphi}u_2 \quad\text{in}\quad  H^{\frac{1}{2}}(U_2),
\]
and therefore, $f=e^{-i\varphi}f$ in $U_2$. As $f\in C^\infty(\p M)$ and $f(\tilde m)\ne 0$, we obtain that $e^{-i\varphi(\tilde m)}=1$.  Hence, in view of   \eqref{eq_holonomy_3_1}, 
\[
\varphi(\tilde m)=\int_{\gamma_1} \tilde A\in 2\pi\Z. 
\]
This together with \eqref{eq_holonomy_3} completes the proof of Proposition \ref{prop_holonomy}.

In the proof of Proposition \ref{prop_holonomy} we have used the following unique continuation result which is a direct consequence of a simplified version of \cite[Theorem 1.1]{Koch_Tataru}.

\begin{prop}
 \label{prop_unique} 
Let $(M,g)$ be a  smooth compact Riemannian manifold of dimension $n\ge 3$ with smooth boundary $\p M$,  let $U\subset M^0$ be an open connected set  and let $\Gamma\subset \p U\cap \p M$ be an open non-empty set of class $C^\infty$. Let  $A\in L^\infty(M,T^*M)$, $q\in L^\infty (M,\C)$, and let $u\in H^1(U)$ be such that 
\begin{align*}
&L_{g,A,q} u=0\quad \text{in}\quad \mathcal{D}'(U),\\
&u|_{\Gamma}=0,\quad \langle d_{A}u,\nu \rangle_g|_{\Gamma}=0.
\end{align*}
Then $u=0$ in $U$. 
\end{prop}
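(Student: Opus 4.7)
The plan is to reduce the statement to the strong unique continuation theorem of \cite{Koch_Tataru} for second order elliptic operators with bounded first order coefficients, by extending $u$ by zero across $\Gamma$ to the exterior of $M$ and verifying that the extension satisfies the magnetic Schr\"odinger equation distributionally on a two-sided neighborhood of $\Gamma$.

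To that end, I would fix a point $x_0\in \Gamma$ and embed $M$ into a slightly larger smooth manifold $\tilde M$ of the same dimension; extend the metric $g$ smoothly and the coefficients $A,q$ by zero across $\Gamma$, calling the extensions $\tilde A$ and $\tilde q$. Choose a small open ball $B\subset \tilde M$ centred at $x_0$ so that $B\cap M^0\subset U$ and so that $B\cap \Gamma$ splits $B$ into two non-empty open pieces $B^+=B\cap M^0$ and $B^-=B\setminus M$. Since $u\in H^1(B^+)$ and $u|_{B\cap \Gamma}=0$, the function $\tilde u$ defined to equal $u$ on $B^+$ and $0$ on $B\setminus B^+$ lies in $H^1(B)$.

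The key step is to verify that $L_{g,\tilde A,\tilde q}\tilde u=0$ in $\mathcal{D}'(B)$. For any $\varphi\in C_0^\infty(B)$, pairing the operator with $\varphi$ and using that $\tilde u$ and its extended coefficients vanish on $B^-$ reduces the expression to an integral over $B^+$; by the weak trace formula \eqref{eq_trace_normal_int} applied with the restriction $\varphi|_{B^+}$ (which has support meeting $\partial B^+$ only along $B\cap \Gamma$), this integral equals the boundary pairing $\langle \langle d_A u,\nu\rangle_g, \overline{\varphi}|_{B\cap \Gamma}\rangle$, which vanishes by the hypothesis $\langle d_A u,\nu\rangle_g|_\Gamma=0$. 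Consequently, $\tilde u$ is an $H^1$ solution on $B$ of an elliptic equation with bounded lower order coefficients, and vanishes on the non-empty open set $B^-$. Invoking the Koch--Tataru weak unique continuation theorem on $B$ forces $\tilde u\equiv 0$ on $B$, so that $u$ vanishes in a neighbourhood in $U$ of $x_0$. Since $U$ is connected and the interior unique continuation principle of \cite{Koch_Tataru} applies to $L_{g,A,q}$, this zero set propagates to all of $U$.

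The main technical obstacle lies in the distributional verification above: both pieces of Cauchy data are used in an essential way---the Dirichlet trace $u|_\Gamma=0$ is needed to place the zero extension in $H^1(B)$, while the weakly defined Neumann-type trace $\langle d_A u,\nu\rangle_g|_\Gamma=0$ is needed to annihilate the resulting boundary term. Some care is required because, with $A\in L^\infty$ only, this trace must be interpreted via \eqref{eq_trace_normal_int} rather than classically, so the boundary pairing has to be set up carefully against test functions whose restriction to $\Gamma$ is in $H^{1/2}$. Once this is done, the conclusion is a direct appeal to the Koch--Tataru theorem.
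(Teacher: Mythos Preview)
Your proposal is correct and follows essentially the same approach as the paper: extend $u$ by zero across $\Gamma$ into a larger manifold, use $u|_\Gamma=0$ to ensure the extension lies in $H^1$, use the weak Neumann condition via \eqref{eq_trace_normal_int} to verify the extended function still solves the equation distributionally, and then apply the Koch--Tataru unique continuation result. The only cosmetic difference is that the paper performs the extension to a single connected open set $\tilde U\supset U$ and applies Koch--Tataru once, whereas you work locally near a point of $\Gamma$ and then propagate by interior unique continuation; the substance is the same.
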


\begin{proof}
Let us isometrically embed the manifold $(M,g)$ into a larger closed manifold $(\tilde M,g)$ of the same dimension.  Let $\tilde U\subset \tilde M$ be  an open connected set such that  $\tilde U\supset U$,  $\p U\setminus\Gamma\subset \p \tilde U$ and  $\tilde U\setminus \overline{U}$ is non-empty. We extend $A$ and $q$ to $\tilde U$ so that $A\in L^\infty(\tilde U, T^*\tilde U)$ and $q\in L^\infty(\tilde U, \C)$. Let 
\begin{equation}
\label{eq_holonomy_9}
\tilde u=\begin{cases} u & \text{in}\quad U,\\
0 & \text{in}\quad  \tilde U\setminus\overline{U}.
\end{cases}
\end{equation}
Since $u|_{\Gamma}=0$, we have $u\in H^1(\tilde U)$ and 
\begin{equation}
\label{eq_holonomy_10}
d \tilde u=\begin{cases} du & \text{in}\quad U,\\
0 & \text{in}\quad  \tilde U\setminus\overline{U}.
\end{cases}
\end{equation}
Let us check that 
\begin{equation}
\label{eq_holonomy_11}
L_{g,A,q}\tilde u=0\quad \text{in}\quad  \mathcal{D}'(\tilde U).
\end{equation}
 Indeed, for  $\psi\in C^\infty_0(\tilde U)$, using \eqref{eq_holonomy_9}, \eqref{eq_holonomy_10}, we get 
\begin{align*}
\langle L_{g,A,q} \tilde u, \psi\rangle&=\int_{\tilde U} \big(  \langle d\tilde u, d\psi\rangle_g + i \langle  d\psi, A\tilde u \rangle_g  -i  \langle A,d\tilde u \rangle_g\psi  +  (\langle A,A \rangle_g +q)\tilde u\psi   \big)dV_g\\
&=\int_{U} \big(  \langle d\tilde u, d\psi\rangle_g + i \langle  d\psi, A\tilde u \rangle_g  -i  \langle A,d\tilde u \rangle_g\psi  +  (\langle A,A \rangle_g +q)\tilde u\psi   \big)dV_g\\
&=  \langle  \langle d_A u,\nu \rangle_g, \psi|_{\p U}  \rangle_{H^{-\frac{1}{2}}(\p U)\times H^{\frac{1}{2}}(\p U)}=0.
\end{align*}
Here we have used that $\supp(\psi|_{\p U})\subset \Gamma$ and the fact that $\langle d_{A}u,\nu \rangle_g|_{\Gamma}=0$. This shows \eqref{eq_holonomy_11}. 

Now as $\tilde u\in H^1(\tilde U)$ satisfies \eqref{eq_holonomy_11} and $\tilde u=0$ in $\tilde U\setminus \overline{U}$, by the unique continuation result of  \cite[Theorem 1.1]{Koch_Tataru} we conclude that $\tilde u$ vanishes identically on $\tilde U$. The proof is complete. 
\end{proof}

\begin{appendix}
\section{Boundary determination of a continuous magnetic potential}

\label{sec_boundary_rec}

When proving Theorem \ref{thm_main_2}, an important step consists in determining the boundary values of the tangential components of the continuous magnetic potentials. The purpose of this section is to carry out this step by adapting the method of  \cite{Brown_Salo_2006} developed in the case of magnetic Schr\"odinger operators on $\R^n$.  Compared with the latter work, here we treat the case of magnetic Schr\"odinger operators on a smooth compact Riemannian manifold with boundary and we do not assume the well-posedness of the  Dirichlet problem for the magnetic Schr\"odinger operator. 

To circumvent the difficulty related to the fact that zero may be a Dirichlet eigenvalue,  we shall use the solvability result for the magnetic Schr\"odinger operator given in Proposition \ref{prop_solvability},  which is based on the Carleman estimate with a gain of two derivatives established in Proposition \ref{prop_Carleman_magnetic_mnfld}.  We have learned of the idea of using a Carleman estimate to handle the case when zero is a Dirichlet eigenvalue from the work \cite{Salo_Tzou_2009} on the Dirac operator. 

\begin{prop} 
\label{prop_boundary_rec}
Let $(M,g)$ be a smooth compact Riemannian manifold of dimension $n\ge 3$ with smooth boundary $\p M$, and  let $A^{(1)}, A^{(2)}\in C(M, T^*M)$ and $q^{(1)}, q^{(2)}\in L^\infty(M,\C)$.  Assume that $C_{g, A^{(1)},q^{(1)}}=C_{g, A^{(2)},q^{(2)}}$. 
Then 
$\langle \tau,  A^{(1)}(x_0)- A^{(2)}(x_0)\rangle =0$,
for all points $x_0\in \p M$ and all unit tangent vectors $\tau\in T_{x_0}(\p M)$. Here $\langle \cdot, \cdot \rangle$ is the duality between the tangent and cotangent bundles of $M$. 
\end{prop}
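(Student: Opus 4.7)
The plan is to adapt the boundary reconstruction method of Brown--Salo \cite{Brown_Salo_2006} to our Riemannian setting with merely continuous magnetic potentials. Fix $x_0\in\p M$ and a unit tangent vector $\tau\in T_{x_0}(\p M)$; we want to show $\langle\tau,\,(A^{(1)}-A^{(2)})(x_0)\rangle=0$. The first step is to work in boundary normal coordinates $(x',x_n)$ near $x_0$, in which $M\cap U=\{x_n\ge 0\}$, $x_0$ is the origin, and the metric takes the form $g=dx_n^2+g_{\alpha\beta}(x)\,dx^\alpha dx^\beta$ with $g_{\alpha\beta}(0)=\delta_{\alpha\beta}$, after rotating the tangential frame so that $\tau$ corresponds to a fixed unit vector $\tau'\in\R^{n-1}$. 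Then the phase $\Phi(x)=-x_n+i\tau'\cdot x'$ satisfies the eikonal equation $|\nabla_g\Phi|_g^2=0$ at $x_0$.

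For each small $h>0$ I would build approximate solutions localized in an $h^{1/2}$-tangential and $h$-normal neighborhood of $x_0$ of the form
\[
v_j^h(x)=h^{-(n-1)/4}\,\chi\!\left(\tfrac{x'}{h^{1/2}}\right)\psi\!\left(\tfrac{x_n}{h}\right)\,e^{\Phi(x)/h}\,e^{i\Psi_j^h(x)},\qquad j=1,2,
\]
where $\chi\in C_0^\infty(\R^{n-1})$ and $\psi\in C_0^\infty([0,\infty))$ are fixed smooth cutoffs. The amplitudes $\Psi_j^h$, normalized by $\Psi_j^h(x_0)=0$, solve approximate transport equations driven by $A^{(j)}$; since $A^{(j)}$ is only continuous, I would solve them with the mollified potentials $A^{(j)}_\tau$ from Proposition~\ref{prop_approximation_cont} at scale $\tau=h^\sigma$ with $0<\sigma<1/2$, exactly as in Sections~\ref{sec_CGO_admissible} and~\ref{sec_Gaussian_beam}, so that $\Psi_j^h$ converges uniformly near $x_0$ to a continuous limit. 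I then produce exact solutions $u_1^h=v_1^h+r_1^h$ and $u_2^h=v_2^h+r_2^h$ in $H^1(M^0)$ of $L_{g,A^{(1)},q^{(1)}}u_1^h=0$ and $L_{g,\overline{A^{(2)}},\overline{q^{(2)}}}u_2^h=0$ by applying the Carleman-based solvability of Proposition~\ref{prop_solvability}. Invoking Proposition~\ref{prop_solvability} is crucial: it avoids any need to solve a Dirichlet problem and hence sidesteps the possibility that zero is a Dirichlet eigenvalue of the operators in question. A direct scaling computation yields $\|v_j^h\|_{L^2(M)}=O(h^{1/2})$, $\|dv_j^h\|_{L^2(M)}=O(h^{-1/2})$, and one checks $\|r_j^h\|_{H^1_{\mathrm{scl}}(M^0)}=o(h^{1/2})$.

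Next, substitute $u_1^h,u_2^h$ into the integral identity of Proposition~\ref{prop_integral_identify}, available since $C_{g,A^{(1)},q^{(1)}}=C_{g,A^{(2)},q^{(2)}}$. Because $u_1^h$ and $u_2^h$ share the phase $e^{\Phi/h}$, the principal part of $u_1^h\,d\overline{u_2^h}-\overline{u_2^h}\,du_1^h$ equals $h^{-1}(\overline{d\Phi}-d\Phi)\,v_1^h\overline{v_2^h}=-2ih^{-1}(\tau'\cdot dx')\,v_1^h\overline{v_2^h}$ plus lower order terms, and a concentration computation via the change of variables $y'=x'/h^{1/2}$, $y_n=x_n/h$ gives
\[
\frac{1}{h}\int_M f\,v_1^h\,\overline{v_2^h}\,dV\longrightarrow c\,f(x_0),\qquad f\in C(M),
\]
for an explicit positive constant $c$ determined by $\chi,\psi$ (using that the amplitudes $\Psi_j^h$ are continuous at $x_0$ and vanish there). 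The electric and $\langle A^{(j)},A^{(j)}\rangle_g$ terms contribute $O(h)$, and the cross terms involving the correction $r_j^h$ are $o(1)$ by Cauchy--Schwarz using the norm bounds above. The identity thus reduces in the limit $h\to 0$ to
\[
2c\,\langle\tau,\,(A^{(1)}-A^{(2)})(x_0)\rangle=0,
\]
which is the desired conclusion.

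The main obstacle is the simultaneous control of three error sources: (a) the low regularity of $A^{(j)}\in C(M,T^*M)$, absorbed via mollification at scale $\tau=h^\sigma$ with $\|A^{(j)}-A^{(j)}_\tau\|_{L^\infty}=o(1)$; (b) the compensating derivative growth $\|\nabla A^{(j)}_\tau\|_{L^\infty}=O(h^{-\sigma})$, which is tamed by the small $h$-volume $O(h^{(n+1)/2})$ of the support of $v_j^h$; and (c) sharpening the Carleman correction $r_j^h$ to the threshold $o(h^{1/2})$ in $H^1_{\mathrm{scl}}(M^0)$ via Proposition~\ref{prop_solvability}, which in turn requires showing $\|L_{g,A^{(j)},q^{(j)}}v_j^h\|_{H^{-1}_{\mathrm{scl}}(M^0)}=o(h^{3/2})$. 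The choice $0<\sigma<1/2$ is precisely what renders these three estimates compatible, paralleling the bookkeeping in the proofs of Theorems~\ref{thm_main} and~\ref{thm_main_2}.
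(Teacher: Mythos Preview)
Your overall strategy---localized oscillatory ansatz in boundary normal coordinates, plug into the integral identity of Proposition~\ref{prop_integral_identify}, take a concentration limit---is the right one, and matches the paper's in spirit. But there is a genuine gap in your remainder estimate, and it is exactly the step the paper works hardest to get around.

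The problem is the eikonal error. Your phase $\Phi(x)=-x_n+i\tau'\cdot x'$ satisfies $|\nabla_g\Phi|_g^2=0$ only at $x_0$; in boundary normal coordinates $g^{\alpha\beta}(x)=\delta^{\alpha\beta}+\mathcal{O}(|x|)$, so on $\supp v_j^h$ (diameter $\sim h^{1/2}$) one has $|\nabla_g\Phi|_g^2=\mathcal{O}(h^{1/2})$. The leading term of $-\Delta_g v_j^h$ is then $h^{-2}|\nabla_g\Phi|_g^2\,v_j^h$, whose $L^2$ norm is $\mathcal{O}(h^{-3/2})\cdot\|v_j^h\|_{L^2}=\mathcal{O}(h^{-1})$, and this does not improve to the required $o(h^{-1/2})$ in $H^{-1}_{\mathrm{scl}}$. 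Your transport amplitudes $\Psi_j^h$ can only absorb the $h^{-1}$ (transport) terms, not this $h^{-2}$ eikonal residue. Consequently the claimed bound $\|L_{g,A^{(j)},q^{(j)}}v_j^h\|_{H^{-1}_{\mathrm{scl}}}=o(h^{3/2})$ fails, and with it $\|r_j^h\|_{H^1_{\mathrm{scl}}}=o(h^{1/2})$; but without that, the cross terms $\int\langle A^{(1)}-A^{(2)},\,r_1\,d\overline{v_2^h}\rangle$ are not $o(1)$.

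The paper's fix is to insert a second corrector \emph{before} invoking solvability: one first solves the Dirichlet problem $-\Delta_g v_1=\Delta_g v_0$, $v_1|_{\p M}=0$, so that $\Delta_g(v_0+v_1)=0$ and $L_{g,A,q}(v_0+v_1)$ contains only first-order (magnetic) terms. These are then bounded in $H^{-1}$ via Hardy's inequality and the weighted estimate $\|\delta\,d(v_0+v_1)\|_{L^2}=\mathcal{O}(\lambda^{(n-1)/4+1/2})$, the latter coming from Caccioppoli's inequality. Proposition~\ref{prop_solvability} is applied only afterwards, with the Carleman parameter held \emph{fixed} (independent of the concentration scale $\lambda$), serving merely as a solvability statement for $L_{g,A,q}$; it is not tied to the phase $\Phi$. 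A further point you gloss over: even with the correct estimate $\|r_j\|_{H^1}=\mathcal{O}(\lambda^{(n-1)/4+1/2})$, naive Cauchy--Schwarz on $\int\langle A^{(1)}-A^{(2)},\,r_1\,d\overline{v_0}\rangle$ gives only $\mathcal{O}(1)$ after normalization, not $o(1)$; the paper needs an additional regularization of $A^{(1)}-A^{(2)}$ and an integration by parts to close this term.
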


\begin{proof}
We shall follow  \cite{Brown_Salo_2006} closely.  The idea  is to construct some special solutions to the magnetic Schr\"odinger equations, whose boundary values have an oscillatory behavior while becoming increasingly concentrated near a given point on the boundary of 
$M$.  Substituting these solutions into the integral identity of  Proposition \ref{prop_integral_identify} will allow us to recover the tangential components of the magnetic potentials along the boundary.

Let $x_0\in \p M$ and let $(x_1,\dots, x_n)$ be the boundary normal coordinates centered at $x_0$ so that in these coordinates, $x_0 =0$, the boundary $\p M$ is given by $\{x_n=0\}$, and $M^0$ is given by $\{x_n > 0\}$. It is then well known \cite{Lee_Uhlmann} that  
\[
g(x',x_n)=\sum_{\alpha,\beta=1}^{n-1}g_{\alpha\beta}(x)dx_\alpha dx_\beta+(dx_n)^2,
\]
and therefore, 
\begin{equation}
\label{eq_Laplace_boundary-nc}
-\Delta_g=D_{x_n}^2+\sum_{\alpha,\beta=1}^{n-1}g^{\alpha\beta}(x)D_{x_\alpha}D_{x_\beta}+f(x)D_{x_n}+R(x,D_{x'}),
\end{equation}
where 
$f$ is a smooth function and $R$ is a differential operator of order $1$ in $x'$ with smooth coefficients.  We shall assume, as we may, that 
\begin{equation}
\label{eq_Laplace_boundary-nc_2}
g^{\alpha \beta}(0)=\delta^{\alpha \beta}, \quad 1\le \alpha,\beta\le n-1,
\end{equation}
and therefore $T_0\p M=\R^{n-1}$, equipped with the Euclidean metric.  The unit tangent vector $\tau$ is then given by $\tau=(\tau',0)$ where $\tau'\in \R^{n-1}$, $|\tau'|=1$.   Associated to the tangent vector $\tau'$ is the covector $\xi'_\alpha=\sum_{\beta=1}^{n-1} g_{\alpha \beta}(0) \tau'_\beta=\tau'_\alpha\in T^*_{x_0}\p M$.

Let $\eta\in C^\infty_0(\R^n,\R)$ be a function such that $\supp(\eta)$ is in a small neighborhood of $0$, and 
\begin{equation}
\label{eq_int_eta_1}
\int_{\R^{n-1}}\eta(x',0)^2dx'=1.
\end{equation}
Following \cite{Brown_Salo_2006}, in the boundary normal coordinates,  we set 
\begin{equation}
\label{eq_9_1}
v_0(x)=\eta\bigg(\frac{x}{\lambda^{1/2}}\bigg)e^{\frac{i}{\lambda}(\tau'\cdot x'+ ix_n)}, \quad 0<\lambda\ll 1,
\end{equation}
so that  $v_0\in C^\infty(M)$ with $\supp(v_0)$ in  $\mathcal{O}(\lambda^{1/2})$ neighborhood of $x_0=0$. Here $\tau'$ is viewed as a covector. 

Let $v_1\in H^1_0(M^0)$ be the solution to the following Dirichlet problem for the Laplacian,
\begin{equation}
\label{eq_9_Dirichlet_lapl}
\begin{aligned}
-\Delta_g v_1=&\Delta_g v_0, \quad\textrm{in}\quad M,\\
v_1|_{\p M}=&0.   
\end{aligned}
\end{equation} 
Let $\delta(x)$ be the distance from $x\in M$ to the boundary of $M$.  Similarly  to \cite{Brown_Salo_2006}, we shall need the following estimates, 
\begin{equation}
\label{eq_9_3} 
\|v_0\|_{L^2(M)}\le \mathcal{O}(\lambda^{\frac{n-1}{4}+\frac{1}{2}}),
\end{equation}
\begin{equation}
\label{eq_9_4}
\|v_1\|_{L^2(M)}\le \mathcal{O}(\lambda^{\frac{n-1}{4}+\frac{1}{2}}),
\end{equation}
\begin{equation}
\label{eq_9_15}
\|d v_1\|_{L^2(M)}\le \mathcal{O}(\lambda^{\frac{n-1}{4}}),
\end{equation}
\begin{equation}
\label{eq_Brown_Salo_2_18}
\|\delta dv_0\|_{L^2(M)}\le \mathcal{O}(\lambda^{\frac{n-1}{4}+\frac{1}{2}}),
\end{equation}
\begin{equation}
\label{eq_Brown_Salo_2_18_new_mnfld}
\| dv_0\|_{L^2(M)}\le \mathcal{O}(\lambda^{\frac{n-1}{4}-\frac{1}{2}}),
\end{equation}
\begin{equation}
\label{eq_Brown_Salo_2_20}
\|\delta d(v_0+v_1)\|_{L^2(M)}\le \mathcal{O}(\lambda^{\frac{n-1}{4}+\frac{1}{2}}),
\end{equation}
which we shall now proceed to prove. First, the following direct computation,
\begin{align*}
\|v_0\|^2_{L^2(M)}&=\mathcal{O}(1)\int_{|x|\le c\lambda^{\frac{1}{2}}, x_n\ge 0} e^{-\frac{2x_n}{\lambda}}dx'dx_n\le \mathcal{O}(\lambda^{\frac{n-1}{2}})\int_{0}^{\infty} e^{-2t} \lambda dt\\
&\le \mathcal{O}(\lambda^{\frac{n-1}{2}+1}),
\end{align*}
gives  the bound \eqref{eq_9_3}. The bound \eqref{eq_Brown_Salo_2_18} is implied by the following estimates,
\begin{align*}
\|\delta dv_0\|^2_{L^2(M)}\le \mathcal{O}(1) \|x_n d v_0\|^2_{L^2(M)}&\le \mathcal{O}(1) \int_{|x|\le c\lambda^{\frac{1}{2}}, x_n\ge 0} x_n^2 \lambda^{-2}e^{-\frac{2x_n}{\lambda}}dx'dx_n\\
&\le \mathcal{O}(\lambda^{\frac{n-1}{2}+1})
\int_{0}^{\infty} t^2e^{-2t} dt\le \mathcal{O}(\lambda^{\frac{n-1}{2}+1}).
\end{align*}
The bound \eqref{eq_Brown_Salo_2_18_new_mnfld} can be shown similarly. 

To prove the bound \eqref{eq_9_4}, since the boundary of $M$ is smooth, we shall proceed a bit differently than in \cite{Brown_Salo_2006},  relying on the partial hypoellipticity of elliptic equations. Specifically applying \cite[Theorem 26.3]{Eskin_book} to the Dirichlet problem \eqref{eq_9_Dirichlet_lapl}, we conclude that 
\begin{equation}
\label{eq_app_1_-1}
\|v_1+v_0\|_{L^2(M)}\le C\| v_0\|_{H^{-\frac{1}{2}}(\p M)}. 
\end{equation}

To estimate $\| v_0\|_{H^{-\frac{1}{2}}(\p M)}$, we first notice that 
\begin{equation}
\label{eq_app_1}
\mathcal{F}_{x'\to \xi'}(v_0(x',0))(\xi')=\lambda^{\frac{n-1}{2}}   \mathcal{F}_{x'\to \xi'} (\eta(x',0))(\lambda^{\frac{1}{2}}\xi'-\lambda^{-\frac{1}{2}}\tau')=: \lambda^{\frac{n-1}{2}}\tilde \eta(\lambda^{\frac{1}{2}}\xi'-\lambda^{-\frac{1}{2}}\tau').
\end{equation}
Then making the change of variables $z=\lambda^{\frac{1}{2}}\xi'-\lambda^{-\frac{1}{2}}\tau'$, we get 
\begin{equation}
\label{eq_app_2}
\begin{aligned}
\| v_0 & \|_{H^{-\frac{1}{2}}(\p M)}^2\le \mathcal{O}(\lambda^{n-1})\int_{\R^{n-1}}\frac{1}{1+|\xi'|} |  \tilde \eta (\lambda^{\frac{1}{2}}\xi'-\lambda^{-\frac{1}{2}}\tau')  |^2d\xi'\\
&= \mathcal{O}(\lambda^{\frac{n-1}{2}})\int_{\R^{n-1}}\frac{\lambda^{\frac{1}{2}}}{\lambda^\frac{1}{2}+| z+\lambda^{-\frac{1}{2}}\tau'|} |\tilde \eta(z)|^2dz=\mathcal{O}(\lambda^{\frac{n-1}{2}})(J_1+J_2),
\end{aligned}
\end{equation}
where 
\begin{align*}
J_1:= \int_{|z|\le \lambda^{-\frac{1}{2}}/2 } \frac{\lambda^{\frac{1}{2}}}{\lambda^\frac{1}{2}+| z+\lambda^{-\frac{1}{2}}\tau'|} |\tilde \eta(z)|^2 dz,\\
J_2:= \int_{|z|\ge \lambda^{-\frac{1}{2}}/2 }\frac{\lambda^{\frac{1}{2}}}{\lambda^\frac{1}{2}+| z+\lambda^{-\frac{1}{2}}\tau'|} |\tilde \eta(z)|^2 dz.
\end{align*}
Using the fact that $\tilde \eta\in \mathcal{S}(\R^{n-1})$, we 
\begin{equation}
\label{eq_app_3}
\begin{aligned}
&|J_1|\le \frac{\lambda}{\lambda+1/2} \int_{\R^{n-1}}|\tilde \eta(z)|^2dz\le \mathcal{O}(\lambda),\\
&|J_2|\le \int_{|z|\ge \lambda^{-\frac{1}{2}}/2}|\tilde \eta(z)|^2dz=\mathcal{O}(\lambda^N),\quad \forall N>0.
\end{aligned}
\end{equation}
It follows from
\eqref{eq_app_2}
and \eqref{eq_app_3} that 
\[
\| v_0\|_{H^{-\frac{1}{2}}(\p M)}\le \mathcal{O}(\lambda^{\frac{n-1}{4}+\frac{1}{2}}),
\]
and therefore, this estimate together with \eqref{eq_app_1_-1} and \eqref{eq_9_3} gives \eqref{eq_9_4}.

Let us now prove the bound \eqref{eq_9_15}. Applying the Lax--Milgram lemma to \eqref{eq_9_Dirichlet_lapl}, we get 
\begin{equation}
\label{eq_bound_rec_new_3--100_1}
\|v_1\|_{H^1_0(M^0)}\le C\|\Delta_g v_0\|_{H^{-1}(M^0)},
\end{equation}
and therefore, to show \eqref{eq_9_15} we have to estimate $\|\Delta_g v_0\|_{H^{-1}(M^0)}$. In doing so, let $\varphi\in C^\infty_0(M^0)$. Then  we get 
\[
|( f(x)D_{x_n}v_0+R(x,D_{x'})v_0, \varphi)_{L^2(M)}|\le C\|v_0\|_{L^2(M)} \|\varphi\|_{H^1(M^0)},
\]
and therefore,
\begin{equation}
\label{eq_bound_rec_new_3--100}
\| f(x)D_{x_n}v_0+R(x,D_{x'})v_0\|_{H^{-1}(M^0)}\le C\|v_0\|_{L^2(M)}.
\end{equation}
In view of \eqref{eq_Laplace_boundary-nc} and \eqref{eq_Laplace_boundary-nc_2} it remains to consider 
\[
P:=D_{x_n}^2+\sum_{\alpha,\beta=1}^{n-1} g^{\alpha\beta}(x)D_{x_\alpha}D_{x_\beta}=-\Delta+   \sum_{\alpha,\beta=1}^{n-1} \mathcal{O}_{\alpha,\beta}(|x|) D_{x_\alpha}D_{x_\beta},
\]
where $-\Delta$ is the Euclidean Laplacian. 
By Hardy's inequality, see \cite{Davies_2000}, 
\begin{equation}
\label{eq_Hardy}
\int_{M}|f(x)/\delta(x)|^2 dV_g\le C\int_{M}|d f(x)|^2dV_g,
\end{equation}
where $f\in H^1_0(M^0)$, we obtain that 
\[
|(Pv_0,\varphi)_{L^2(M)}|=|(\delta Pv_0,\varphi/\delta)_{L^2(M)}|\le C \|\delta Pv_0\|_{L^2(M)}\|\varphi\|_{H^1_0(M^0)}.
\]
Thus, 
\begin{equation}
\label{eq_bound_rec_new_1}
\|Pv_0\|_{H^{-1}(M^0)}\le  C \|\delta Pv_0\|_{L^2(M)}.
\end{equation}

We have
\begin{equation}
\label{eq_bound_rec_new_2}
\begin{aligned}
\Delta v_0=& e^{\frac{i}{\lambda}(\tau'\cdot x'+ix_n)}\bigg[  \lambda^{-1} (\Delta \eta)\bigg(\frac{x}{\lambda^{\frac{1}{2}}}\bigg)+ 2i \lambda^{-\frac{3}{2}} (\nabla \eta)\bigg(\frac{x}{\lambda^{\frac{1}{2}}}\bigg)\cdot(\tau',i)\\
& -
\lambda^{-2} (\tau',i)\cdot(\tau',i)\eta\bigg(\frac{x}{\lambda^{\frac{1}{2}}}\bigg)
\bigg]\\
= & e^{\frac{i}{\lambda}(\tau'\cdot x'+ix_n)}\bigg[  \lambda^{-1} (\Delta \eta)\bigg(\frac{x}{\lambda^{\frac{1}{2}}}\bigg)+ 2i \lambda^{-\frac{3}{2}} (\nabla \eta)\bigg(\frac{x}{\lambda^{\frac{1}{2}}}\bigg)\cdot(\tau',i)\bigg],
\end{aligned}
\end{equation}
where we have used that $(\tau',i)\cdot(\tau',i)=0$. We also obtain that 
\begin{equation}
\label{eq_bound_rec_new_3}
\begin{aligned}
 D_{x_\alpha}D_{x_\beta}v_0= & e^{\frac{i}{\lambda}(\tau'\cdot x'+ix_n)}\bigg[  \lambda^{-1} (D_{x_\alpha}D_{x_\beta} \eta)\bigg(\frac{x}{\lambda^{\frac{1}{2}}}\bigg)+  \lambda^{-\frac{3}{2}} (D_{x_\alpha} \eta)\bigg(\frac{x}{\lambda^{\frac{1}{2}}}\bigg)\tau_\beta\\
& +  \lambda^{-\frac{3}{2}} (D_{x_\beta} \eta)\bigg(\frac{x}{\lambda^{\frac{1}{2}}}\bigg)\tau_\alpha + \lambda^{-2} \tau_\alpha \tau_\beta \eta\bigg(\frac{x}{\lambda^{\frac{1}{2}}}\bigg)
\bigg]. 
\end{aligned}
\end{equation}
Using \eqref{eq_bound_rec_new_1}, \eqref{eq_bound_rec_new_2} and \eqref{eq_bound_rec_new_3}, we get 
\begin{equation}
\label{eq_bound_rec_new_4}
\begin{aligned}
\|Pv_0\|^2_{H^{-1}(M^0)}\le \mathcal{O}(1) \int_{|x|\le c\lambda^{\frac{1}{2}}, x_n\ge 0} x_n^2 e^{-\frac{2x_n}{\lambda}} (\lambda^{-3}+|x|^2\lambda^{-4} )dx\\
\le \mathcal{O}(\lambda^{\frac{n-1}{2}}\lambda^{-3})\int_0^\infty x_n^2e^{-\frac{2x_n}{\lambda}} dx_n\le \mathcal{O}
(\lambda^{\frac{n-1}{2}}).
\end{aligned}
\end{equation}
Hence, by \eqref{eq_bound_rec_new_3--100}, \eqref{eq_9_3} and \eqref{eq_bound_rec_new_4}, we obtain that 
\[
\|\Delta_g v_0\|_{H^{-1}(M^0)}\le \mathcal{O}(\lambda^{\frac{n-1}{4}}).
\]
This estimate together with \eqref{eq_bound_rec_new_3--100_1} proves  \eqref{eq_9_15}. 

Finally, the estimate \eqref{eq_Brown_Salo_2_20} follows by an application of Caccioppoli's inequality
\[
\|\delta d(v_0+v_1)\|_{L^2(M)}\le C\|v_0+v_1\|_{L^2(M)},
\]
see \cite[Theorem 4.4]{Giaquinta_book}, combined with \eqref{eq_9_3} and \eqref{eq_9_4}.

Next we would like to show the  existence of a solution $u_1\in H^1(M^0)$ to the magnetic Schr\"odinger equation
\begin{equation}
\label{eq_9_10}
L_{g, A^{(1)},q^{(1)}}u_1=0\quad \textrm{in}\quad \mathcal{D}'(M^0),
\end{equation}
of the form 
\begin{equation}
\label{eq_9_11}
u_1=v_0+v_1+r_1,
\end{equation}
with 
\begin{equation}
\label{eq_9_12}
\|r_1\|_{H^1(M^0)}\le  \mathcal{O}(\lambda^{\frac{n-1}{4}+\frac{1}{2}}).
\end{equation}
To that end, plugging \eqref{eq_9_11} into \eqref{eq_9_10}, we obtain the following equation for $r_1$,
\[
L_{g, A^{(1)},q^{(1)}} r_1=- id^*(A^{(1)}(v_0+v_1)) +i \langle A^{(1)}, dv_0+dv_1\rangle_g - (\langle A^{(1)},A^{(1)} \rangle_g+q^{(1)})(v_0+v_1) 
\]
in $\mathcal{D}'(M^0)$. Applying Proposition \ref{prop_solvability} with $h>0$ small but fixed, we conclude the existence of $r_1\in H^1(M^0)$ such that
\begin{equation}
\label{eq_right_hand_r_1}
\begin{aligned}
\|r_1\|_{H^1(M^0)}
\le C\|   - id^*(A^{(1)}(v_0+v_1))& +i \langle A^{(1)}, dv_0+dv_1\rangle_g\\
& - (\langle A^{(1)},A^{(1)} \rangle_g+q^{(1)})(v_0+v_1) \|_{H^{-1}(M^0)}.
\end{aligned}
\end{equation}
Let us now compute the norm in the right hand side of \eqref{eq_right_hand_r_1}.   
Let $\psi\in C^\infty_0(M^0)$. Then using \eqref{eq_Brown_Salo_2_20} and \eqref{eq_Hardy}, we get
\begin{equation}
\label{eq_r_1_est_1}
\begin{aligned}
|\langle \langle A^{(1)}, dv_0+dv_1\rangle_g,\psi\rangle_{M^0}|
\le \mathcal{O}(1)\|A^{(1)}\|_{L^\infty(M)}\|\delta d (v_0+v_1) \|_{L^2(M)}\|\psi/\delta\|_{L^2(M)}\\
\le \mathcal{O}(\lambda^{\frac{n-1}{4}+\frac{1}{2}})\|d \psi\|_{L^2(M)}\le  \mathcal{O}(\lambda^{{\frac{n-1}{4}+\frac{1}{2}}})\| \psi\|_{H^1(M^0)}.
\end{aligned}
\end{equation}

Using \eqref{eq_9_3} and \eqref{eq_9_4}, we obtain that 
\begin{equation}
\label{eq_r_1_est_2}
\begin{aligned}
|\langle &   d^*(A^{(1)}(v_0+v_1))  ,\psi\rangle_{M^0}|=\bigg| \int_{M^0} \langle d\psi,  A^{(1)}(v_0+v_1) \rangle_g dV\bigg|\\
&\le \mathcal{O}(1)\|A^{(1)}\|_{L^\infty(M)}\|v_0+v_1\|_{L^2(M)}\|d \psi\|_{L^2(M)}\le \mathcal{O}(\lambda^{\frac{n-1}{4}+\frac{1}{2}})\| \psi\|_{H^1(M^0)},
\end{aligned}
\end{equation}
and 
\begin{equation}
\label{eq_r_1_est_3}
\begin{aligned}
|\langle  (\langle A^{(1)},A^{(1)} \rangle_g+q^{(1)})(v_0+v_1)  ,\psi\rangle_{M^0}|\le \mathcal{O}(\lambda^{\frac{n-1}{4}+\frac{1}{2}})\| \psi\|_{H^1(M^0)}.
\end{aligned}
\end{equation}
The estimate  \eqref{eq_9_12} follows from \eqref{eq_right_hand_r_1}, \eqref{eq_r_1_est_1},  \eqref{eq_r_1_est_2}, and \eqref{eq_r_1_est_3}.

Similarly, there exists a solution $u_2\in H^1(M^0)$ of $L_{g, \overline{A^{(2)}},\overline{q^{(2)}}}u_2=0$ in $\mathcal{D}'(M^0)$ of the form 
\begin{equation}
\label{eq_9_16}
u_2=v_0+v_1+r_2,
\end{equation}
where $r_2\in H^1(M^0)$ satisfies \eqref{eq_9_12}.  

The next step is to substitute the solutions $u_1$ and $u_2$, given by \eqref{eq_9_11} and \eqref{eq_9_16} into the identity \eqref{eq_proof_1} of Proposition \ref{prop_integral_identify}, multiply it by $\lambda^{-\frac{(n-1)}{2}}$ and compute the limit as $\lambda\to 0$. We write
 \begin{equation}
\label{eq_def_I}
I:= \lambda^{-\frac{(n-1)}{2}}  \int_M i  \langle A^{(1)}-A^{(2)}, u_1d\overline{u_2}-\overline{u_2}du_1 \rangle_g dV=I_1+I_2+I_3+I_4+I_5,
\end{equation}
where 
\begin{align*}
I_1&= \lambda^{-\frac{(n-1)}{2}}  \int_M i  \langle A^{(1)}-A^{(2)},  v_0 d\overline{v_0}-\overline{v_0}d v_0  \rangle_g dV,\\
I_2&= \lambda^{-\frac{(n-1)}{2}} \int_M i  \langle A^{(1)}-A^{(2)},   u_1d \overline{v_1}-\overline{u_2}d v_1\rangle_g dV,\\
I_3&= \lambda^{-\frac{(n-1)}{2}} \int_M i  \langle A^{(1)}-A^{(2)},  u_1 d \overline{r_2}-\overline{u_2}d r_1 \rangle_g dV,\\
I_4&=   \lambda^{-\frac{(n-1)}{2}}  \int_M i  \langle A^{(1)}-A^{(2)},    v_1 d \overline{v_0}-\overline{v_1} d v_0  \rangle_g dV,\\
I_5&= \lambda^{-\frac{(n-1)}{2}}  \int_M i  \langle A^{(1)}-A^{(2)},  r_1d \overline{v_0}-\overline{r_2}d  v_0  \rangle_g dV.
\end{align*}

Let us compute $\lim_{\lambda\to 0}I_1$. To that end we have
\begin{equation}
\label{eq_v_0_grad}
d v_0= e^{\frac{i}{\lambda}(\tau'\cdot x'+i x_n)}  \bigg[ d \bigg( \eta\bigg(\frac{x}{\lambda^{\frac{1}{2}}}\bigg)\bigg ) +    \eta\bigg(\frac{x}{\lambda^{\frac{1}{2}}}\bigg) \frac{i}{\lambda} (\tau' dx' +idx_n)\bigg],
\end{equation}
and 
\[
v_0 d \overline{v_0}-\overline{v_0}d  v_0=-2i \frac{1}{\lambda}\eta^2 \bigg(\frac{x}{\lambda^{\frac{1}{2}}} \bigg) e^{-\frac{2x_n}{\lambda} } \tau' \cdot dx'.
\]
Making the change of variables $y'=\frac{x'}{\lambda^{\frac{1}{2}}}$, $y_n=\frac{x_n}{\lambda}$,  using that $A^{(1)},A^{(2)}\in C(M, T^*M)$ and \eqref{eq_int_eta_1}, we get 
 \begin{equation}
\label{eq_7_18}
\begin{aligned}
\lim_{\lambda\to 0} I_1=2 &\sum_{\alpha,\beta=1}^{n-1} \lim_{\lambda\to 0}\int_{\R^{n-1}}\int_0^{+\infty} | g(\lambda^{\frac{1}{2}} y', \lambda y_n) |^{\frac{1}{2}} g^{\alpha \beta}(\lambda^{\frac{1}{2}} y', \lambda y_n)\\
 &(A^{(1)}-A^{(2)})_\alpha(\lambda^{\frac{1}{2}} y', \lambda y_n)\tau_\beta
\eta^2(y',\lambda^{\frac{1}{2}}y_n)e^{-2y_n} dy'dy_n\\
&=|g(0)|^{\frac{1}{2}} \sum_{\alpha,\beta=1}^{n-1} g^{\alpha \beta}(0) (A^{(1)}_\alpha (0)-A_\alpha^{(2)}(0))\tau_\beta= \langle A^{(1)}(0)-A^{(2)}(0), \tau \rangle.
\end{aligned}
 \end{equation}

Now it follows from  \eqref{eq_9_3}, \eqref{eq_9_4} and \eqref{eq_9_12}  that
\begin{equation}
\label{eq_9_17}
\|u_j\|_{L^2(M)}\le \mathcal{O}(\lambda^{\frac{n-1}{4}+\frac{1}{2}}),\quad j=1,2.
\end{equation}
The estimates \eqref{eq_9_15} and \eqref{eq_9_17} give that 
\begin{equation}
\label{eq_I_2}
|I_2|\le \mathcal{O}(\lambda^{-\frac{(n-1)}{2}})\|A^{(1)}-A^{(2)}\|_{L^\infty(M)}(\|u_1\|_{L^2(M)}+ \|u_2\|_{L^2(M)})\|d v_1\|_{L^2(M)}
\le \mathcal{O}(\lambda^{\frac{1}{2}}).
\end{equation}

By \eqref{eq_9_12} and \eqref{eq_9_17}, we have
\begin{equation}
\label{eq_I_3}
\begin{aligned}
|I_3|\le \mathcal{O}(\lambda^{-\frac{(n-1)}{2}} )\|A^{(1)}-A^{(2)}\|_{L^\infty(M)}(&\|u_1\|_{L^2(M)}\|d r_2\|_{L^2(M)}
\\
&+ \|u_2\|_{L^2(M)}\|d r_1\|_{L^2(M)})\le \mathcal{O}(\lambda).
\end{aligned}
\end{equation}

Using the fact that $v_1\in H^1_0(M)$, Hardy's inequality \eqref{eq_Hardy}, and the estimates  \eqref{eq_Brown_Salo_2_18},  and \eqref{eq_9_15}, we get 
\begin{equation}
\label{eq_I_4}
\begin{aligned}
|I_4|&\le   \mathcal{O}(\lambda^{-\frac{(n-1)}{2}})  \|A^{(1)}-A^{(2)}\|_{L^\infty(M)}\|v_1/\delta\|_{L^2(M)}\|\delta d v_0\|_{L^2(M)}\\
&\le \mathcal{O}(\lambda^{-\frac{(n-1)}{2}}) \|d v_1\|_{L^2(M)}\|\delta d v_0\|_{L^2(M)}\le \mathcal{O}(\lambda^{\frac{1}{2}}).
\end{aligned}
\end{equation}

Let us now estimate $|I_5|$.  First, a direct computation shows that 
\begin{equation}
\label{eq_v_0_boundary}
\| v_0\|_{L^2(\p M)}\le \mathcal{O}(\lambda^\frac{n-1}{4}).
\end{equation}
Assume that $(M,g)$ is embedded in a compact smooth manifold $(N,g)$ without boundary of the same dimension. 
Let us extend $A:=A^{(1)}-A^{(2)}\in C(M, T^*M)$ to a continuous $1$-form on $N$, and we shall write $A\in C(N,T^*N)$.  
Using a partition of unity argument together with a regularization  in each coordinate patch, we get that there exists a family  $A_\tau\in C^\infty(N,T^*N)$ such that 
\begin{equation}
\label{eq_flat_est_new}
\|A-A_\tau\|_{L^\infty}=o(1),\quad \tau\to 0,
\end{equation}
and 
\begin{equation}
\label{eq_flat_est_2_new}
\begin{aligned}
\|A_\tau\|_{L^\infty}=\mathcal{O}(1), \quad
\|\nabla A_\tau\|_{L^\infty}=\mathcal{O}(\tau^{-1}),\quad \tau\to 0.  
\end{aligned}
\end{equation}
Let us consider 
 \begin{align*}
J:=  \lambda^{-\frac{(n-1)}{2}}    \int_{M}  i \langle A,  r_1 d  \overline{v_0}\rangle_g dV=J_1+J_2,
 \end{align*}
where
\begin{align*}
J_1=  \lambda^{-\frac{(n-1)}{2}}     \int_{M}  i \langle (A-A_\tau),  r_1 d  \overline{v_0}\rangle_g dV,\quad 
J_2=  \lambda^{-\frac{(n-1)}{2}}    \int_{M}  i \langle A_\tau,  r_1 d  \overline{v_0}\rangle_g dV.
\end{align*}
Using \eqref{eq_9_12},  \eqref{eq_Brown_Salo_2_18_new_mnfld}, and \eqref{eq_flat_est_new}, we get 
\[
|J_1|\le \mathcal{O}( \lambda^{-\frac{(n-1)}{2}}) \|A-A_\tau\|_{L^\infty(M)}\|r_1\|_{L^2(M)}\|d v_0\|_{L^2(M)}=o(1),
\]
as $ \tau\to 0$. To estimate $J_2$, it is no longer sufficient to use the bound \eqref{eq_Brown_Salo_2_18_new_mnfld}, and therefore, we shall integrate by parts. We get
\[
J_2=J_{2,1}+J_{2,2}+J_{2,3},
\]
where  
\begin{align*}
 &J_{2,1}= -\lambda^{-\frac{(n-1)}{2}}\int_{M} i  \div( A^\sharp_\tau) r_1 \overline{v_0} dV,\quad J_{2,2} =- \lambda^{-\frac{(n-1)}{2}} \int_{M}i  \langle A_\tau,  d r_1\rangle_g \overline{v_0} dV,\\
 &J_{2,3}= \lambda^{-\frac{(n-1)}{2}} \int_{\p M} i \langle A_\tau, \nu\rangle_g r_1\overline{v_0} dS.
\end{align*}
Here $A^\sharp_\tau=g^{jk}(A_\tau)_j\p_{x_k}$.
By \eqref{eq_9_3}, \eqref{eq_9_12}, and \eqref{eq_flat_est_2_new}, we have
\[
|J_{2,1}|\le \mathcal{O} (\lambda^{-\frac{(n-1)}{2}}) \|\nabla_g  A_\tau \|_{L^\infty(M)}\|r_1\|_{L^2(M)}\|v_0\|_{L^2(M)}\le \mathcal{O}(\tau^{-1}\lambda),
\] 
and 
\[
|J_{2,2}|\le \mathcal{O}(\lambda^{-\frac{(n-1)}{2}})  \|A_\tau \|_{L^\infty(M)} \|d r_1\|_{L^2(M)}\|v_0\|_{L^2(M)} \le \mathcal{O}(\lambda). 
\]
Using the trace theorem, the estimates \eqref{eq_9_12} and \eqref{eq_v_0_boundary}, we get
\[
|J_{2,3}|\le  \mathcal{O}(\lambda^{-\frac{(n-1)}{2}} ) \| \langle A_\tau, \nu\rangle_g\|_{L^\infty(\p M)}\|r_1\|_{H^1(M)}\|v_0\|_{L^2(\p M)}\le \mathcal{O}(\lambda^{1/2}). 
\]
Choosing $\tau=\lambda^{1/2}$, we conclude that $|J|=o(1)$ as $\lambda\to 0$, and therefore, 
\begin{equation}
\label{eq_I_5}
|I_5|=o(1),\quad \lambda\to 0. 
\end{equation}

Hence, for $I$, defined by \eqref{eq_def_I}, using \eqref{eq_7_18}, \eqref{eq_I_2}, \eqref{eq_I_3}, \eqref{eq_I_4}, and  \eqref{eq_I_5}, we get 
\begin{equation}
\label{eq_4_1_first}
\lim_{\lambda\to 0} I=\langle A^{(1)}(0)-A^{(2)}(0), \tau \rangle. 
\end{equation}

Furthermore, for $u_1$ and $u_2$, given by \eqref{eq_9_11} and \eqref{eq_9_16}, respectively, using \eqref{eq_9_17},  we obtain that 
\begin{equation}
\label{eq_4_1_second}
\lambda^{-\frac{(n-1)}{2}}\bigg|\int_M (\langle A^{(1)}, A^{(1)}\rangle_g- \langle A^{(2)}, A^{(2)}\rangle_g +q^{(1)}-q^{(2)})u_1\overline{u_2}dV \bigg|\le \mathcal{O}(\lambda).
\end{equation}

Thus, we conclude from \eqref{eq_proof_1} with the help of \eqref{eq_4_1_first} and \eqref{eq_4_1_second} that 
$\langle A^{(1)}(0)-A^{(2)}(0), \tau \rangle=0$. The proof of Proposition \ref{prop_boundary_rec} is complete. 
\end{proof}

\end{appendix}

\section*{Acknowledgements}
The research of K.K. is partially supported by the National Science Foundation (DMS 1500703). The research of G.U. is partially supported by the National Science Foundation, Simons Fellowship, and the Academy of Finland.

\end{document}